\theoremstyle{plain}
\newtheorem{theo}{Theorem}[section]
\newtheorem{lem}[theo]{Lemma}
\newtheorem{prop}[theo]{Proposition}
\theoremstyle{definition}
\newtheorem{definition}[theo]{Definition}
\theoremstyle{remark}
\newtheorem{rem}[theo]{Remark}
\numberwithin{equation}{section}
\newcommand{\C}{\mathbb{C}}
\newcommand{\R}{\mathbb{R}}
\newcommand{\N}{\mathbb{N}}
\newcommand{\M}{\mathbb{M}}
\newcommand{\divrg}{\textrm{div}\,}
\title{Recent results about the
detection of unknown boundaries and inclusions in elastic plates
\thanks{Work supported by PRIN No. 20089PWTPS}}
\author{Antonino Morassi\thanks{Dipartimento di Ingegneria Civile e Architettura,
Universit\`a degli Studi di Udine, via Cotonificio 114, 33100
Udine, Italy. E-mail: \textsf{antonino.morassi@uniud.it}}, \  Edi
Rosset\thanks{Dipartimento di Matematica e Geoscienze,
Universit\`a degli Studi di Trieste, via Valerio 12/1, 34127
Trieste, Italy. E-mail: \textsf{rossedi@univ.trieste.it}} \ and
Sergio Vessella\thanks{Dipartimento di Matematica per le
Decisioni, Universit\`a degli Studi di Firenze, Via delle Pandette
9, 50127 Firenze, Italy. E-mail:
\textsf{sergio.vessella@dmd.unifi.it}}}
\date{}
\begin{document}
\maketitle

\noindent \textbf{Abstract.} In this paper we review some recent
results concerning inverse problems for thin elastic plates. The
plate is assumed to be made by non-homogeneous linearly elastic
material belonging to a general class of anisotropy. A first group
of results concerns uniqueness and stability for the determination
of unknown boundaries, including the cases of cavities and rigid
inclusions. In the second group of results, we consider upper and
lower estimates of the area of unknown inclusions given in terms
of the work exerted by a couple field applied at the boundary of
the plate. In particular, we extend previous size estimates for
elastic inclusions to the case of cavities and rigid inclusions.

\medskip

\medskip

\noindent \textbf{Mathematical Subject Classifications (2000):}
35R30, 35R25, 73C02.

\medskip

\medskip

\noindent \textbf{Key words:} inverse problems, elastic plates,
uniqueness, stability estimates, size estimates, three sphere
inequality, unique continuation.

\section{Introduction} \label{sec:intro}

The problems we consider in the present paper belong to a more
general issue that has evolved in the last fifteen years in the
field of inverse problems. Such an issue collects the problems of
determining, by a finite number of boundary measurements, unknown
boundaries and inclusions entering the boundary value problems for
partial differential equations and systems of elliptic and
parabolic type. Such problems arise in nondestructive techniques
by electrostatic measurements \cite{Ka-Sa}, \cite{In}, in thermal
imaging \cite{Vo-Mo}, \cite{BrC96}, in elasticity theory
\cite{Nakamura}, \cite{Bonnet-Constantinescu}, \cite{M-R2}, and in
many other similar applications \cite{Is2}. In this paper we try
to enlighten the different facets of the issue fixing our
attention on the theory of thin elastic plates. In Section
\ref{sec:model} we give a self contained derivation of the
Kirchhoff-Love plate model on which such a theory is based.

We begin with the problem of the determination of a rigid inclusion
embedded in a thin elastic plate.

Let $\Omega$ denote the middle plane of the plate. We assume that
$\Omega$ is a bounded domain of $\R^2$ of class $C^{1,1}$. Let $h$
be its constant thickness, $h<<\hbox{diam}(\Omega)$. The rigid
inclusion $D$ is modeled as an open simply connected domain
compactly contained in $\Omega$, with boundary of class $C^{1,1}$.
The transversal displacement $w \in H^2(\Omega)$ of the plate
satisfies the following mixed boundary value problem, see, for
example, \cite{l:fi} and \cite{l:gu},
\begin{center}
\( {\displaystyle \left\{
\begin{array}{lr}
     {\rm div}({\rm div} (
      {\mathbb P}\nabla^2 w))=0,
      & \mathrm{in}\ \Omega \setminus \overline {D},
        \vspace{0.25em}\\
      ({\mathbb P} \nabla^2 w)n\cdot n=-\widehat M_n, & \mathrm{on}\ \partial \Omega,
          \vspace{0.25em}\\
      {\rm div}({\mathbb P} \nabla^2 w)\cdot n+(({\mathbb P} \nabla^2
      w)n\cdot \tau),_s
      =(\widehat M_\tau),_s, & \mathrm{on}\ \partial \Omega,
        \vspace{0.25em}\\
      w|_{\overline{D}} \in \mathcal{A}, &\mathrm{in}\ \overline{D},
          \vspace{0.25em}\\
        w^e_{,n} = w^i_{,n}, &\mathrm{on}\ \partial {D},
          \vspace{0.25em}\\
\end{array}
\right. } \) \vskip -8.9em
\begin{eqnarray}
& & \label{eq:1.dir-pbm-incl-rig-1}\\
& & \label{eq:1.dir-pbm-incl-rig-2}\\
& & \label{eq:1.dir-pbm-incl-rig-3}\\
& & \label{eq:1.dir-pbm-incl-rig-4}\\
& & \label{eq:1.dir-pbm-incl-rig-5}
\end{eqnarray}

\end{center}
coupled with the \emph{equilibrium conditions} for the rigid
inclusion $D$
\begin{multline}
  \label{eq:1.equil-rigid-incl}
    \int_{\partial D} \left ( {\rm div}({\mathbb P} \nabla^2 w)\cdot n+(({\mathbb P} \nabla^2
  w)n\cdot \tau),_s \right )g - (({\mathbb P} \nabla^2 w)n\cdot n)
  g_{,n} =0, \\   \quad \hbox{for every } g\in \mathcal{A},
\end{multline}
where $\mathcal{A}$ denotes the space of affine functions. In the
above equations, $n$ and $\tau$ are the unit outer normal and the
unit tangent vector to $\Omega \setminus \overline{D}$,
respectively, and we have defined $w^e \equiv w|_{\Omega \setminus
\overline{D}}$ and $w^i \equiv w|_{\overline{D}}$. Moreover,
$\widehat M_{\tau}$, $\widehat M_n$ are the twisting and bending
components of the assigned couple field $\widehat M$,
respectively. The plate tensor $\mathbb P$ is given by $\mathbb P
= \frac{h^3}{12}\mathbb C$, where $\mathbb C$ is the elasticity
tensor describing the response of the material of the plate.
We assume that $\mathbb C$ has cartesian components
$C_{ijkl}$, $i,j,k,l=1,2$, which satisfy the standard symmetry
conditions \eqref{eq:sym-conditions-C-components}, the regularity assumption \eqref{eq:3.bound} and the
strong convexity condition \eqref{eq:3.convex}.

Given any $\widehat{M}\in H^{-\frac{1}{2}}(\partial\Omega, \R^2)$,
satisfying the compatibility conditions $\int_{\partial \Omega}
\widehat{M}_i = 0$, for $i=1,2$, problem
\eqref{eq:1.dir-pbm-incl-rig-1}--\eqref{eq:1.equil-rigid-incl}
admits a solution $w \in H^{2}(\Omega)$, which is uniquely
determined up to addition of an affine function.

Let us denote by $\Gamma$ an open portion
within $\partial \Omega$ representing the part of the boundary
where measurements are taken.

The inverse problem consists in determining $D$ {}from the
measurement of $w$ and $w_{,n}$ on $\Gamma$. For instance, the
uniqueness issue can be formulated as follows:
\textit{Given two solutions $w_i$ to
\eqref{eq:1.dir-pbm-incl-rig-1}--\eqref{eq:1.equil-rigid-incl} for
$D=D_i$, $i=1,2$, satisfying
\begin{equation}
  \label{eq:1.cond_Dir1}
  w_1 = w_2, \hbox{ on } \Gamma,
\end{equation}
\begin{equation}
  \label{eq:1.cond_Dir2}
  w_{1,n}=w_{2,n}, \hbox{ on } \Gamma,
\end{equation}
does $D_1=D_2$ hold?}

It is convenient to replace each solution $w_i$ introduced
above with $v_i=w_i-g_i$, where $g_i$ is the affine function which
coincides with $w_i$ on $\partial D_i$, $i=1,2$. By this approach,
maintaining the same letter to denote the solution, we rephrase
the equilibrium problem
\eqref{eq:1.dir-pbm-incl-rig-1}--\eqref{eq:1.dir-pbm-incl-rig-5}
in terms of the following mixed boundary value problem with
homogeneous Dirichlet conditions on the boundary of the rigid
inclusion

\begin{center}
\( {\displaystyle \left\{
\begin{array}{lr}
     {\rm div}({\rm div} (
      {\mathbb P}\nabla^2 w))=0,
      & \mathrm{in}\ \Omega \setminus \overline {D},
        \vspace{0.25em}\\
      ({\mathbb P} \nabla^2 w)n\cdot n=-\widehat M_n, & \mathrm{on}\ \partial \Omega,
          \vspace{0.25em}\\
      {\rm div}({\mathbb P} \nabla^2 w)\cdot n+(({\mathbb P} \nabla^2
      w)n\cdot \tau),_s
      =(\widehat M_\tau),_s, & \mathrm{on}\ \partial \Omega,
        \vspace{0.25em}\\
      w=0, &\mathrm{on}\ \partial D,
          \vspace{0.25em}\\
        \frac{\partial w}{\partial n} = 0, &\mathrm{on}\ \partial {D},
          \vspace{0.25em}\\
\end{array}
\right. } \) \vskip -8.9em
\begin{eqnarray}
& & \label{eq:1.dir-pbm-incl-rig-1bis}\\
& & \label{eq:1.dir-pbm-incl-rig-2bis}\\
& & \label{eq:1.dir-pbm-incl-rig-3bis}\\
& & \label{eq:1.dir-pbm-incl-rig-4bis}\\
& & \label{eq:1-dir-pbm-incl-rig-5bis}
\end{eqnarray}

\end{center}
\noindent coupled with the \emph{equilibrium conditions}
\eqref{eq:1.equil-rigid-incl}, which has  a unique solution $w\in
H^2(\Omega\setminus \overline{D})$. Therefore, the uniqueness
question may be rephrased as follows:

\textit{Given two solutions $w_i$ to
\eqref{eq:1.dir-pbm-incl-rig-1bis}--\eqref{eq:1-dir-pbm-incl-rig-5bis}, \eqref{eq:1.equil-rigid-incl} for $D=D_i$, $i=1,2$,
satisfying, for some $g\in \mathcal{A}$
\begin{equation}
  \label{eq:cond_Dir_stab}
  w_1 - w_2 = g,\quad (w_{1} - w_{2})_{,n} =  g_{,n}, \quad \hbox{on }\Gamma,
\end{equation}
does $D_1=D_2$ hold?}

Obviously, the inverse problem above is equivalent to the
determination of the portion $\partial D$ of the boundary of
$\Omega\setminus\bar D$ in the boundary value problem
\eqref{eq:1.dir-pbm-incl-rig-1bis}-\eqref{eq:1-dir-pbm-incl-rig-5bis}.

In the stability issue, instead of \eqref{eq:1.cond_Dir1} and
\eqref{eq:1.cond_Dir2}, we assume
 \begin{equation}
    \label{eq:new_10}
\min_{g \in \cal{A}} \left\{\|w_1 - w_2 -g \|_{L^2(\Gamma)}+
\left\|(w_1 - w_2 -g)_{,n} \right\|_{L^2(\Gamma)}\right\}\leq
\epsilon,
\end{equation}
for some $\epsilon >0$, and we ask for the following estimate
\begin{equation}
    \label{eq:new_20}
d_{\cal H}(\partial D_1,\partial D_2) \leq \eta(\epsilon),
\end{equation}
where $\eta(\epsilon)$ is a suitable infinitesimal function.

The uniqueness for the problem above has been proved in \cite{M-R2}
under the a priori assumption of $C^{3,1}$ regularity of $\partial
D$ and with only one nontrivial couple field. Here, by nontrivial we
mean that
\begin{equation}
    \label{eq:new_25}
(\widehat M_n,\widehat M_{\tau,s}) \not\equiv 0.
\end{equation}

Concerning the stability issue, in \cite{M-R-V5} we have proved a
log-log type estimate, namely in inequality \eqref{eq:new_20} we
have $\eta(\epsilon)=O\left((\log|\log
{\epsilon}|)^{-\alpha}\right)$, where the positive parameter
$\alpha$ depends on the a priori data, see Theorem \ref{theo:Main}
below for a precise statement.

In \cite{M-R-V2} the inverse problem of determining a cavity in an
elastic plate has been faced. We recall that in such a case
conditions (1.4)-(1.5) are replaced by homogeneous Neumann
boundary conditions, which are much more difficult to handle with
respect to Dirichlet boundary conditions arising in the case of
rigid inclusions. For this reason a uniqueness result has been
established making \emph{two} linearly independent boundary
measurements. In \cite{M-R2} it has also been proved a uniqueness
result for a variant of the problems considered above, that is the
case of a plate whose boundary has an unknown and inaccessible
portion where $\widehat M=0$. In this case, thanks to the more
favorable geometric situation, one measurement suffices to detect
the unknown boundary portion. The corresponding stability results
for these two cases have not yet been proved.

The methods used to prove the above mentioned uniqueness and
stability results are based on unique continuation properties and
quantitative estimates of unique continuation for solutions to the
plate equation \eqref{eq:1.dir-pbm-incl-rig-1}. Since such
properties and estimates are consequences of the three sphere
inequality for solutions to equation
\eqref{eq:1.dir-pbm-incl-rig-1}, we will discuss a while about the
main features of such inequality.

The three sphere inequality for solutions to partial differential
equations and systems has a long and interesting history that
intertwines with the issue of unique continuation properties and
the issue of stability estimates \cite{Al-M}, \cite{Ho85},
\cite{Is}, \cite{Jo}, \cite{Lan}, \cite{Lav}, \cite{LRS},
\cite{L-N-W}, \cite{L-Nak-W}. In many important cases, the three
sphere inequality is the elementary tool to prove various types of
quantitative estimates of unique continuation such as, for
example, stability estimates for the Cauchy problem, smallness
propagation estimates and quantitative evaluation of the vanishing
rate of solutions to PDEs. Such questions have been intensively
studied in the context of second order equations of elliptic and
parabolic type. We refer to \cite{A-R-R-V} and \cite{Ve2} where
these topics are widely investigated for such types of equations.

The three sphere inequality for equation \eqref{eq:1.dir-pbm-incl-rig-1} has been proved in \cite{M-R-V5} under
the very general assumption that the elastic material of the plate
is anisotropic and obeys the so called \textit{dichotomy condition}.
Roughly speaking, such a condition implies that the plate operator
at the left hand side of \eqref{eq:1.dir-pbm-incl-rig-1} can be
written as $L_2L_1+Q$, where $L_2$, $L_1$ are second order elliptic
operators with $C^{1,1}$ coefficients and $Q$ is a third order
operator with bounded coefficients. For more details we refer to
\eqref{3.D(x)bound}-\eqref{3.D(x)bound 2} below and \cite{M-R-V5}. A
simplified version of such inequality is the following one
\begin{equation}
  \label{eq:3sph}
   \int_{B_{r_{2}}(x_0)}|\nabla ^2w|^{2} \leq C
   \left(  \int_{B_{r_{1}}(x_0)}|\nabla ^2w|^{2}
   \right)^{\delta}\left(  \int_{B_{r_{3}}(x_0)}|\nabla ^2w|^{2}
   \right)
   ^{1-\delta},
\end{equation}
for every $r_1<r_2<r_3$, where $\delta\in (0,1)$ and $C$ depend only on the parameters
related to the regularity, ellipticity and dichotomy conditions assumed on $\mathbb C$, and on the ratios $r_1/r_2$, $r_2/r_3$; in particular,  $\delta$ and $C$ \textit{do not depend} on $w$.

Previously, the three sphere inequality was proved in \cite{M-R-V1},
for the isotropic plate (that is
 $C_{ijkl}(x)=\delta_{ij}\delta_{kl}\lambda(x)+\left(\delta_{ik}\delta_{jl}
    +\delta_{il}\delta_{jk}\right)\mu(x), \quad \hbox{}{i,j,k,l=1,2}$) and in \cite{Ge} for
    the class of fourth (and higher) order elliptic equation $\mathcal{L}u=0$
    where $\mathcal{L}=L_2L_1$ and $L_2$, $L_1$ are second order elliptic equation with $C^{1,1}$ coefficients.

The proof of the stability result, of which we give a sketch in Subsection \ref{subsec:stab},
has essentially the same structure of the proofs of analogous stability results in the following context:

a) Second order elliptic equations: \cite{AlR},
\cite{BeVe}, \cite{Ro}(two variables elliptic equations);
\cite{A-B-R-V},\cite{AlBRVe2}, \cite{Si} (several variables elliptic
equations)

b) Second order parabolic equation: \cite{CRoVe1}, \cite{CRoVe2},
\cite{DcRVe}, \cite{B-Dc-Si-Ve}, \cite{Ve1}, \cite{Ve2}

c) Elliptic systems: \cite{M-R2}, \cite{M-R3} (elasticity);
\cite{Ba} (Stokes fluid)

It is important to say that the stability estimates proved in the papers of list a) and b) are of
logarithmic type, that is an optimal rate of convergence, as shown by counterexamples (\cite{Al1},
\cite{DcR} for case a) and \cite{DcRVe}, \cite{Ve2} for case b)). The
stability estimates proved in the papers of list c) and in the case
of plate equation are of log-log type, that is with a worse rate of convergence.
It seems difficult to improve such an estimate. We believe that the main difficulty to get such an
improvement is due to the lack of quantitative estimates of
\textit{strong unique continuation property at the boundary}.
In order to give an idea of the crucial point which marks the difference between
these cases, let us notice that, by iterated application of the three sphere
inequality \eqref{eq:3sph} it can be proved there exists
$\bar\rho>0$ such that for every $\rho\in (0,\bar\rho)$ and every
$\bar x\in \partial {D_j}$, $j=1,2$, the following inequality holds
true
\begin{equation}
   \label{eq:new 60}
\int_{B_\rho(\bar x)\cap(\Omega\setminus \overline{D_j})}|\nabla^2
w_j|^2\geq C\exp\left(-A{\rho}^{-B}\right) ,
\end{equation}
where $A>0$, $B>0$ and $C>0$ only depend on the a priori
information, in particular they depend by the quantity (frequency)
   \begin{equation}
   \frac{\|\widehat{M}\|_{L^2(\partial
   \Omega ,\R^2)}}{ \|\widehat{M}\|_{H^{-\frac{1}{2}}(\partial \Omega,\R^2)}}.
   \end{equation}
In cases a) and b), it is possible to prove a refined form of
inequality \eqref{eq:new 60}, in which the exponential
term is replaced with a positive power of $\rho$, obtaining a quantitative estimate of
strong unique continuation property at the boundary.
It has been shown in \cite{A-B-R-V}
that this is a key ingredient in proving that the stability estimate
for the corresponding inverse problem with unknown boundaries in the
conductivity context is not worse than logarithm. This mathematical
tool is available for second order elliptic, \cite{AE}, and parabolic equations, \cite{EsFeVe},
but is not currently available for elliptic systems and plate
equation. This happens even in the simplest case of isotropic
material, and this is the reason for the presence of a double
logarithm in our stability estimate.
Finally, as remarked in \cite{M-R-V4}, it seems hopeless the
possibility that solutions to \eqref{eq:1.dir-pbm-incl-rig-1} can
satisfy even a strong unique continuation property in the interior,
without any a priori assumption on the anisotropy of the material,
see also \cite{Ali}. Regarding this point, our dichotomy condition
\eqref{3.D(x)bound}-\eqref{3.D(x)bound 2} basically contains the
same assumptions under which the unique continuation property holds
for a fourth order elliptic equation in two variables.

In the present paper we have also proved constructive upper and
lower estimates of the area of a rigid inclusion or of a cavity,
$D$, in terms of an easily expressed quantity related to work.
More precisely, suppose we make the following diagnostic test. We
take a reference plate, i.e. a plate without inclusion or cavity,
and we deform it by applying a couple field $\widehat M$ at the
boundary $\partial\Omega$. Let $W_0$ be the work exerted in
deforming the specimen. Next, we repeat the same experiment on a
possibly defective plate. The exerted work generally changes and
assumes, say, the value $W$. We are interested in finding
constructive estimates, from above and from below, of the
\textit{area} of $D$ in terms of the difference $|W-W_0|$. In
order to prove such estimates we proceed along the path outlined
in \cite{M-R-V1} and \cite{M-R-V6} in which the inclusion inside
the plate is made by different elastic material. In this
introduction we illustrate such \emph{intermediate} case, since
the scheme of the mathematical procedure is fairly simple to
describe. With regard to this intermediate case we also want to
stress that, in contrast to the extreme cases, there are not
available any kind of uniqueness result for the inverse problem of
determining inclusion $D$ from the knowledge of a finite number of
measurements on the boundary . This appears to be an extremely
difficult problem. In fact, despite the wide research developed in
this field, a general uniqueness result has not been obtained yet
even in the simpler context arising in electrical impedance
tomography (which involves a second order elliptic equation), see,
for instance, \cite{Is} and \cite{l:aless99} for an extensive
reference list.

Denoting, as above, by $w$ the transversal displacement of the
plate and by $\widehat M_{\tau}$, $\widehat M_n$ the twisting and
bending components of the assigned couple field $\widehat M$,
respectively, the infinitesimal deformation of the defective plate
is governed by the fourth order Neumann boundary value problem
\begin{center}
\( {\displaystyle \left\{
\begin{array}{lr}
     \divrg(\divrg ((\chi_{\Omega \setminus {D}}{\mathbb P} + \chi_{D}
  \widetilde {\mathbb P})\nabla^2 w))=0,
  & \mathrm{in}\ \Omega,
        \vspace{0.25em}\\
     ({\mathbb P} \nabla^2 w)n\cdot n=-\widehat {M}_n, & \mathrm{on}\ \partial
  \Omega,
          \vspace{0.25em}\\
      \divrg({\mathbb P} \nabla^2 w)\cdot n+(({\mathbb P} \nabla^2
  w)n\cdot \tau),_s
  =(\widehat M_\tau),_s, & \mathrm{on}\ \partial
  \Omega.
        \vspace{0.25em}\\
\end{array}
\right. } \) \vskip -6.2em
\begin{eqnarray}
& & \label{eq:intr.equation_with_D}\\
& & \label{eq:intr.bc1_with_D}\\
& & \label{eq:intr.bc2_with_D}
\end{eqnarray}

\end{center}
In the above equations, $\chi_D$ denotes the characteristic
function of $D$.  The plate tensors $\mathbb P$, $\widetilde
{\mathbb P}$ are given by
\begin{equation}
    \label{eq:PandC}
    \mathbb P = \frac{h^3}{12}\mathbb C, \quad \widetilde {\mathbb P} =
    \frac{h^3}{12} \widetilde {\mathbb C},
\end{equation}
where $\mathbb C$ is the elasticity tensor describing the response
of the material in the reference plate  $\Omega$ and satisfies the usual symmetry
conditions \eqref{eq:sym-conditions-C-components}, regularity condition \eqref{eq:3.bound},
strong convexity condition \eqref{eq:3.convex} and the \emph{dichotomy condition},
whereas
$\widetilde {\mathbb C}$ denotes the (unknown) corresponding
tensor for the inclusion $D$.

The work exerted by the couple field
$\widehat M$ has the expression
\begin{equation}
  \label{eq:intr.W}
  W=-\int_{\partial\Omega}\widehat M_{\tau,s}w+\widehat M_nw,_n.
  \end{equation}
When the inclusion $D$ is absent, the equilibrium problem
\eqref{eq:intr.equation_with_D}-\eqref{eq:intr.bc2_with_D} becomes

\begin{center}
\( {\displaystyle \left\{
\begin{array}{lr}
     \divrg(\divrg ( {\mathbb
P}\nabla^2 w_0))=0,
  & \mathrm{in}\ \Omega,
        \vspace{0.25em}\\
     ({\mathbb P} \nabla^2 w_0)n\cdot n=-\widehat M_n, & \mathrm{on}\ \partial
  \Omega,
          \vspace{0.25em}\\
      \divrg({\mathbb P} \nabla^2 w_0)\cdot n+(({\mathbb P} \nabla^2
  w_0)n\cdot \tau),_s
  =(\widehat M_\tau),_s, & \mathrm{on}\ \partial
  \Omega,
        \vspace{0.25em}\\
\end{array}
\right. } \) \vskip -6.2em
\begin{eqnarray}
& & \label{eq:4.equation_without D}\\
& & \label{eq:4.bc1_without_D}\\
& & \label{eq:4.bc2_without_D}
\end{eqnarray}

\end{center}
where $w_0$ is the transversal displacement of the reference
plate. The corresponding work exerted by $\widehat M$ is given by
\begin{equation}
  \label{eq:intr.W_0}
  W_0=-\int_{\partial\Omega}\widehat M_{\tau,s}w_0+\widehat M_nw_{0,n}.
  \end{equation}

In  \cite{M-R-V6} the following result has been proved. Assuming that the following \textit{fatness-condition} is satisfied
\begin{equation}
    \label{eq:intr.fatness}
    \mathrm{area}
    \left (
    \{x \in D | \ dist\{ x, \partial D\} > h_1 \}
    \right )
    \geq \frac{1}{2} \mathrm{area} (D),
\end{equation}
where $h_1$ is a given positive number, then
\begin{equation}
    \label{eq:intr.stime}
    C_1 \left |
    \frac{W-W_0}{W_0}
    \right |
    \leq
    \mathrm{area}(D)
    \leq
    C_2 \left |
    \frac{W-W_0}{W_0}
    \right |,
\end{equation}
where the constants $C_1$, $C_2$ only depend on the a priori data.
Besides the assumptions on the plate
tensor $\mathbb C$ given above, estimates \eqref{eq:intr.stime} are established under some
suitable assumption on the jump $\widetilde {\mathbb C} -
\mathbb C$.

In the \emph{extreme} cases corresponding to a rigid inclusion or
a cavity $D$, the estimates are a little more involute than
\eqref{eq:intr.stime} and additional regularity conditions on the
boundary $\partial D$ and on the plate tensor $\mathbb P$ are
generally  required. The main difference between \emph{extreme}
and \emph{intermediate} cases lies in the estimate from below of
$|D|$. Indeed, in the former case we use regularity estimates (in
the interior) for the reference solution $w_0$ to equation
\eqref{eq:4.equation_without D}, whereas in the latter we combine
regularity estimates, trace and Poincar\'{e} inequalities. On the
other hand, the argument used for the estimate from above of $|D|$
is essentially the same as in \cite{M-R-V6} and involves
quantitative estimates of unique continuation in the form of three
sphere inequality for the hessian $\nabla^2 w_0$. It is exactly at
this point that the dichotomy condition
\eqref{3.D(x)bound}--\eqref{3.D(x)bound 2} on the tensor $\mathbb
C$ is needed.

The analogous bounds in plate theory for \emph{intermediate} inclusions
were first obtained when the reference plate satisfies isotropic conditions, \cite{M-R-V1},
extended to anisotropic materials satisfying the dichotomy conditions, \cite{M-R-V6},
obtained in a weaker form for general inclusions in absence of the fatness condition, \cite{M-R-V3},
and recently in the context of shallow shells in
\cite{l:dlw} and \cite{l:dlvw}. The reader is referred to \cite{l:kss}, \cite{l:ar},
\cite{l:ars}, \cite{BeFrVe} for size estimates of inclusions in the context of
the electrical impedance tomography and to \cite{ikehata98},
\cite{l:amr03}, \cite{l:amr04}, \cite{l:amrv08} for corresponding
problems in two and three-dimensional linear elasticity. See also
\cite{l:LDiN09} for an application of the size estimates approach
in thermography.

Size estimates for \emph{extreme} inclusions were obtained in \cite{l:amr02} for electric conductors and in \cite{M-R0}
for elastic bodies, see also \cite{l:amr03}.

The paper is organized as follows. In Section \ref{sec:notation}
we collect some notation. In Section \ref{sec:model} we provide a
derivation of the Kirchhoff-Love model of the plate. In Section
\ref{sec:uniq_stab} we present the uniqueness and stability
results concerning the determination of rigid inclusions, and the
uniqueness results for the case of cavities and unknown boundary
portions. In particular we have focused our attention on the case
of rigid inclusions and, for a better comprehension of the
arguments, we have recalled the proof of the uniqueness result,
using it as a base for a sketch of the more complex proof of the
stability result. Section \ref{sec:size-estimates-extreme}
contains the estimates of the area of \emph{extreme} inclusions.

\section{Notation} \label{sec:notation}

Let $P=(x_1(P), x_2(P))$ be a point of $\R^2$.
We shall denote by $B_r(P)$ the disk in $\R^2$ of radius $r$ and
center $P$ and by $R_{a,b}(P)$ the rectangle
$R_{a,b}(P)=\{x=(x_1,x_2)\ |\ |x_1-x_1(P)|<a,\ |x_2-x_2(P)|<b \}$. To simplify the notation,
we shall denote $B_r=B_r(O)$, $R_{a,b}=R_{a,b}(O)$.

\begin{definition}
  \label{def:2.1} (${C}^{k,1}$ regularity)
Let $\Omega$ be a bounded domain in ${\R}^{2}$. Given $k\in\N$, we say that a portion $S$ of
$\partial \Omega$ is of \textit{class ${C}^{k,1}$ with
constants $\rho_{0}$, $M_{0}>0$}, if, for any $P \in S$, there
exists a rigid transformation of coordinates under which we have
$P=0$ and
\begin{equation*}
  \Omega \cap R_{\frac{\rho_0}{M_0},\rho_0}=\{x=(x_1,x_2) \in R_{\frac{\rho_0}{M_0},\rho_0}\quad | \quad
x_{2}>\psi(x_1)
  \},
\end{equation*}
where $\psi$ is a ${C}^{k,1}$ function on
$\left(-\frac{\rho_0}{M_0},\frac{\rho_0}{M_0}\right)$ satisfying
\begin{equation*}
\psi(0)=0, \quad \psi' (0)=0, \quad \hbox {when } k \geq 1,
\end{equation*}
\begin{equation*}
\|\psi\|_{{C}^{k,1}\left(-\frac{\rho_0}{M_0},\frac{\rho_0}{M_0}\right)} \leq M_{0}\rho_{0}.
\end{equation*}

\medskip
\noindent When $k=0$ we also say that $S$ is of
\textit{Lipschitz class with constants $\rho_{0}$, $M_{0}$}.
\end{definition}
\begin{rem}
  \label{rem:2.1}
  We use the convention to normalize all norms in such a way that their
  terms are dimensionally homogeneous with the $L^\infty$ norm and coincide with the
  standard definition when the dimensional parameter equals one, see \cite{M-R-V5} for details.

\end{rem}

Given a bounded domain $\Omega$ in $\R^2$ such that $\partial
\Omega$ is of class $C^{k,1}$, with $k\geq 1$, we consider as
positive the orientation of the boundary induced by the outer unit
normal $n$ in the following sense. Given a point
$P\in\partial\Omega$, let us denote by $\tau=\tau(P)$ the unit
tangent at the boundary in $P$ obtained by applying to $n$ a
counterclockwise rotation of angle $\frac{\pi}{2}$, that is
$\tau=e_3 \times n$,
where $\times$ denotes the vector product in $\R^3$, $\{e_1,
e_2\}$ is the canonical basis in $\R^2$ and $e_3=e_1 \times e_2$.
Given any connected component $\cal C$ of $\partial \Omega$ and
fixed a point $P\in\cal C$, let us define as positive the
orientation of $\cal C$ associated to an arclength
parametrization $\varphi(s)=(x_1(s), x_2(s))$, $s \in [0, l(\cal
C)]$, such that $\varphi(0)=P$ and
$\varphi'(s)=\tau(\varphi(s))$, where $l(\cal C)$ denotes the
length of $\cal C$.

Throughout the paper, we denote by $\partial_i u$, $\partial_s u$, and $\partial_n u$
the derivatives of a function $u$ with respect to the $x_i$
variable, to the arclength $s$ and to the normal direction $n$,
respectively, and similarly for higher order derivatives.

We denote by $\mathbb{M}^2$ the space of $2 \times 2$ real valued
matrices and by ${\mathcal L} (X, Y)$ the space of bounded linear
operators between Banach spaces $X$ and $Y$.

For every $2 \times 2$ matrices $A$, $B$ and for every $\mathbb{L}
\in{\mathcal L} ({\mathbb{M}}^{2}, {\mathbb{M}}^{2})$, we use the
following notation:
\begin{equation}
  \label{eq:2.notation_1}
  ({\mathbb{L}}A)_{ij} = L_{ijkl}A_{kl},
\end{equation}
\begin{equation}
  \label{eq:2.notation_2}
  A \cdot B = A_{ij}B_{ij}, \quad |A|= (A \cdot A)^{\frac {1} {2}}.
\end{equation}
Notice that here and in the sequel summation over repeated indexes
is implied.

Finally, let us introduce the linear space of the affine functions
on $\R^2$
\begin{equation*}
  \label{eq:2.notation_3ter}
  \mathcal{A}=\{g(x_1,x_2)=ax_1+bx_2+c,\  a,b,c \in\R\}.
\end{equation*}

\section{The Kirchhoff-Love plate model } \label{sec:model}

In the last two decades different methods were used to provide new
justification of the theory of thin plates. Among these, we recall
the method of asymptotic expansion \cite{Ciarlet-Destuynder}, the
method of internal constraints \cite{Podio-Guidugli},
\cite{Lembo-Podio}, the theory of $\Gamma$-convergence in
conjunction with appropriate averages \cite{Anzellotti} or on
rescaled domain and with rescaled displacements \cite{Bourquin},
\cite{Paroni-1}, and weak convergence methods on a rescaled domain
and with rescaled displacements \cite{Ciarlet}. We refer the
interested reader to \cite{Paroni-2} for a recent account of the
advanced results on this topic. The present section has a more
modest aim: to show how to deduce the equations governing the
statical equilibrium of an elastic thin plate following the
classical approach of the Theory of Structures.

Let us consider a thin plate $\Omega \times \left [ - \frac{h}{2},
\frac{h}{2} \right ]$ with middle surface represented by a bounded
domain $\Omega$ in $\R^2$ having uniform thickness $h$, $h <<
$diam$(\Omega)$, and boundary $\partial \Omega$ of class
$C^{1,1}$. Only in this section, we adopt the convention that
Greek indexes assume the values $1,2$, whereas Latin indexes run
{}from $1$ to $3$.

We follow the direct approach to define the infinitesimal
deformation of the plate. In particular, we restrict ourselves to
the case in which the points $x=(x_1,x_2)$ of the middle surface
$\Omega$ are subject to transversal displacement $w(x_1,x_2)e_3$,
and any transversal material fiber $\{x\}\times \left [ -
\frac{h}{2}, \frac{h}{2} \right ]$, $x\in \Omega$, undergoes an
infinitesimal rigid rotation $\omega(x)$, with $\omega(x)\cdot e_3
=0$. In this section we shall be concerned exclusively with
regular functions on their domain of definition. For example, the
above functions $w$ and $\omega$ are such that $w \in C^\infty(
\overline{\Omega}, \R)$ and $\omega \in C^\infty(
\overline{\Omega}, \R^3)$. These conditions are unnecessarily
restrictive, but this choice simplifies the mechanical formulation
of the equilibrium problem. The above kinematical assumptions
imply that the displacement field present in the plate is given by
the following three-dimensional vector field:
\begin{equation}
  \label{eq:anto-1.2}
  u(x,x_3)=w(x)e_3 + x_3 \varphi (x), \quad x\in
  \overline{\Omega}, \ |x_3| \leq \frac{h}{2},
\end{equation}
where
\begin{equation}
  \label{eq:anto-1.3}
  \varphi (x) = \omega (x) \times e_3, \quad x\in
  \overline{\Omega}.
\end{equation}
By \eqref{eq:anto-1.2} and \eqref{eq:anto-1.3}, the associated
infinitesimal strain tensor $E[u]\in \M^3$ takes the form
\begin{equation}
  \label{eq:anto-2.3}
  E[u](x,x_3) \equiv (\nabla u)^{sym}(x,x_3)= x_3 (\nabla_x
  \varphi(x)  )^{sym} + (\gamma(x) \otimes e_3)^{sym},
\end{equation}
where $\nabla_x (\cdot)=  \frac{\partial}{\partial x_\alpha}
(\cdot ) e_\alpha $ is the surface gradient operator,
$\nabla^{sym}(\cdot)= \frac{1}{2} ( \nabla (\cdot) + \nabla^T
(\cdot))$, and
\begin{equation}
  \label{eq:anto-2.4}
  \gamma(x)=\varphi(x) + \nabla_x w(x).
\end{equation}
Within the approximation of the theory of infinitesimal
deformations, $\gamma$ is the angular deviation between the
transversal material fiber at $x$ and the normal direction to the
deformed middle surface of the plate at $x$. In Kirchhoff-Love
theory it is assumed that every transversal material fiber remains
normal to the deformed middle surface, e.g. $\gamma =0$ in
$\Omega$.

The traditional deduction of the mechanical model of a thin plate
follows essentially {}from integration over the thickness of the
corresponding three-dimensional quantities. In particular, taking
advantage of the infinitesimal deformation assumption, we can
refer the independent variables to the initial undeformed
configuration of the plate.

Let us introduce an arbitrary portion $\Omega' \times \left [ -
\frac{h}{2}, \frac{h}{2} \right ]$ of plate, where $\Omega'
\subset \subset \Omega$ is a subdomain of $\Omega$ with regular
boundary. Consider the material fiber $\{x\}\times \left [ -
\frac{h}{2}, \frac{h}{2} \right ]$ for $x\in \partial \Omega'$ and
denote by $t(x,x_3,e_\alpha)\in \R^3$, $|x_3| \leq \frac{h}{2}$,
the \textit{traction vector} acting on a plane containing the
direction of the fiber and orthogonal to the direction $e_\alpha$.
By Cauchy's Lemma \cite{Truesdell} we have
$t(x,x_3,e_\alpha)=T(x,x_3)e_\alpha$, where $T(x,x_3) \in \M^{3}$
is the (symmetric) Cauchy stress tensor at the point $(x,x_3)$.
Denote by $n$ the unit outer normal vector to $\partial \Omega'$
such that $n\cdot e_3=0$. To simplify the notation, it is
convenient to consider $n$ as a two-dimensional vector belonging
to the plane $x_3=0$ containing the middle surface $\Omega$ of the
plate. By the classical Stress Principle for plates
\cite{Villaggio}, we postulate that the two complementary parts
$\Omega'$ and $\Omega \setminus \Omega'$ interact with one another
through a filed of force vectors $R=R(x,n)\in \R^3$ and couple
vectors $M=M(x,n) \in \R^3$ assigned per unit length at $x \in
\partial \Omega'$. Denoting by
\begin{equation}
  \label{eq:anto-4.1}
  R(x,e_\alpha) = \int_{-h/2}^{h/2} t(x,x_3, e_\alpha) dx_3
\end{equation}
the force vector (per unit length) acting on a direction
orthogonal to $e_\alpha$ and passing through $x \in
\partial \Omega'$, the contact force $R(x,n)$ can be expressed as
\begin{equation}
  \label{eq:anto-4.2}
  R(x,n) = T^\Omega (x) n, \quad x \in \partial \Omega',
\end{equation}
where the \textit{surface force tensor} $T^\Omega (x) \in
\M^{3\times 2}$ is given by
\begin{equation}
  \label{eq:anto-4.3}
  T^\Omega (x) = R(x,e_\alpha) \otimes e_\alpha, \quad \hbox{in }
  \Omega.
\end{equation}
Let $P=I-e_3 \otimes e_3$ be the projection of $\R^3$ along the
direction $e_3$. $T^{\Omega}$ is decomposed additively by $P$ in
its \textit{membranal} and \textit{shearing} component
\begin{equation}
  \label{eq:anto-4.4}
  T^\Omega  = PT^\Omega + (I-P)T^\Omega \equiv T^{\Omega(m)}+
  T^{\Omega(s)},
\end{equation}
where, following the standard nomenclature in plate theory, the
components $T_{\alpha \beta}^{\Omega(m)}$ ($=T_{\beta
\alpha}^{\Omega(m)}$), $\alpha, \beta =1,2$, are called the
\textit{membrane forces} and the components $T_{3
\beta}^{\Omega(s)}$, $\beta=1,2$, are the \textit{shear forces}
(also denoted as $T_{3 \beta}^{\Omega(s)} = Q_\beta$). The
assumption of infinitesimal deformations and the hypothesis of
vanishing in-plane displacements of the middle surface of the
plate allow us to take
\begin{equation}
  \label{eq:anto-5.1}
  T^{\Omega(m)} = 0, \quad \hbox{in } \Omega.
\end{equation}
Denote by
\begin{equation}
  \label{eq:anto-5.2}
  M(x,e_\alpha) = \int_{-h/2}^{h/2} x_3 e_3 \times t (x,x_3,e_\alpha)
  dx_3, \quad \alpha=1,2,
\end{equation}
the contact couple acting at $x \in \partial \Omega'$ on a
direction orthogonal to $e_\alpha$ passing through $x$. Note that
$M(x,e_\alpha) \cdot e_3=0$ by definition, that is $M(x,e_\alpha)$
actually is a two-dimensional couple field belonging to the middle
plane of the plate. Analogously to \eqref{eq:anto-4.2}, we have
\begin{equation}
  \label{eq:anto-5.3}
  M(x,n) = M^\Omega (x) n, \quad x \in \partial \Omega',
\end{equation}
where the \textit{surface couple tensor} $M^\Omega (x) \in
\M^{2\times 2}$ has the expression
\begin{equation}
  \label{eq:anto-4.3}
  M^\Omega (x) = M(x,e_\alpha) \otimes e_\alpha.
\end{equation}
A direct calculation shows that
\begin{equation}
  \label{eq:anto-6.1}
  M(x,e_\alpha) = e_3 \times e_\beta M_{\beta \alpha}(x),
\end{equation}
where
\begin{equation}
  \label{eq:anto-6.2}
  M_{\beta \alpha}(x)=\int_{-h/2}^{h/2}x_3 T_{\beta
  \alpha}(x,x_3)dx_3, \quad \alpha, \beta =1,2,
\end{equation}
are the \textit{bending moments} (for $\alpha=\beta$) and the
\textit{twisting moments} (for $\alpha \neq \beta$) of the plate
at $x$ (per unit length).

The differential equilibrium equation for the plate follows {}from
the integral mechanical balance equations applied to any subdomain
$\Omega'  \subset \subset \Omega$ \cite{Truesdell}. Denote by
$q(x)e_3$ the external transversal force per unit area acting in
$\Omega$. The statical equilibrium of the plate is satisfied if
and only if the following two equations are simultaneously
satisfied:
\begin{center}
\( {\displaystyle \left\{
\begin{array}{lr}
    \int_{\partial \Omega'} T^{\Omega}n ds + \int_{\Omega'}qe_3
    dx=0,
        \vspace{0.25em}\\
    \int_{\partial \Omega'} \left ( (x-x_0) \times T^{\Omega}n
    + M^{\Omega}n \right ) ds + \int_{\Omega'} (x-x_0)\times qe_3 dx=0,
          \vspace{0.25em}\\
\end{array}
\right. } \) \vskip -4.5em
\begin{eqnarray}
& & \label{eq:anto-7.1}\\
& & \label{eq:anto-7.2}
\end{eqnarray}
\end{center}
for every subdomain $\Omega' \subseteq \Omega$, where $x_0$ is a
fixed point. By applying the Divergence Theorem in $\Omega'$ and
by the arbitrariness of $\Omega'$ we deduce
\begin{center}
\( {\displaystyle \left\{
\begin{array}{lr}
    {\rm div}_x T^{\Omega(s)} + qe_3=0,
    & \mathrm{in}\ \Omega,
          \vspace{0.25em}\\
    {\rm div}_x M^{\Omega} + (T^{\Omega(s)})^Te_3 \times e_3=0,
    & \mathrm{in}\ \Omega.
          \vspace{0.25em}\\
\end{array}
\right. } \) \vskip -4.5em
\begin{eqnarray}
& & \label{eq:anto-7.3}\\
& & \label{eq:anto-7.4}
\end{eqnarray}
\end{center}
Consider the case in which the boundary of the plate $\partial
\Omega$ is subjected simultaneously to a couple field
$\widehat{M}$, $\widehat{M}\cdot e_3=0$, and a transversal force
field $\widehat{Q}e_3$. Local equilibrium considerations on points
of $\partial \Omega$ yield the following boundary conditions:
\begin{center}
\( {\displaystyle \left\{
\begin{array}{lr}
    M^{\Omega}n = \widehat{M},
    & \mathrm{on}\ \partial \Omega,
          \vspace{0.25em}\\
    T^{\Omega(s)}n= \widehat{Q}e_3,
    & \mathrm{on}\ \partial \Omega.
          \vspace{0.25em}\\
\end{array}
\right. } \) \vskip -4.5em
\begin{eqnarray}
& & \label{eq:anto-7.6}\\
& & \label{eq:anto-7.7}
\end{eqnarray}
\end{center}
where $n$ is the unit outer normal to $\partial \Omega$. In
cartesian components, the equilibrium equations
\eqref{eq:anto-7.3}--\eqref{eq:anto-7.7} take the form
\begin{center}
\( {\displaystyle \left\{
\begin{array}{lr}
     M_{\alpha\beta, \beta} - Q_\alpha=0,
      & \mathrm{in}\ \Omega, \mathrm{ \alpha=1,2},
        \vspace{0.25em}\\
      Q_{\alpha,\alpha} + q=0, & \mathrm{in}\ \Omega,
          \vspace{0.25em}\\
      M_{\alpha\beta}n_\alpha n_\beta = \widehat{M}_n,
      & \mathrm{on}\ \partial \Omega,
        \vspace{0.25em}\\
      M_{\alpha\beta}\tau_\alpha n_\beta = -\widehat{M}_\tau,
      &\mathrm{on}\ \partial \Omega,
          \vspace{0.25em}\\
      Q_\alpha n_\alpha = \widehat{Q}, &\mathrm{on}\ \partial
      \Omega.
          \vspace{0.25em}\\
\end{array}
\right. } \) \vskip -8.9em
\begin{eqnarray}
& & \label{eq:anto-8.1}\\
& & \label{eq:anto-8.2}\\
& & \label{eq:anto-8.3}\\
& & \label{eq:anto-8.4}\\
& & \label{eq:anto-8.5}
\end{eqnarray}
\end{center}
Here, following a standard convention in the theory of plates, we
have decomposed the boundary couple field $\widehat{M}$ in local
coordinates as $\widehat{M}= \widehat{M}_\tau n + \widehat{M}_n
\tau$.

To complete the formulation of the equilibrium problem, we need to
introduce the constitutive equation of the material. We limit
ourselves to the Kirchhoff-Love theory and we choose to regard the
kinematical assumptions $E_{i3}[u]=0$, $i=1,2,3$ (see
\eqref{eq:anto-2.3}, with $\gamma =0$) as internal constraints,
that is we restrict the possible deformations of the points of the
plate to those whose infinitesimal strain tensor belongs to the
set
\begin{equation}
  \label{eq:anto-8.6}
  {\cal{M}}=\{E \in \M^{3\times3}|  E=E^T,  E \cdot A=0,
  \hbox{ for } A= e_i \otimes e_3 + e_3 \otimes e_i, \ i=1,2,3\}.
\end{equation}
Then, by the \textit{Generalized Principle of Determinism}
\cite{Truesdell}, the Cauchy stress tensor $T$ at any point
$(x,x_3)$ of the plate is additively decomposed in an
\textit{active} (symmetric) part $T_{A}$ and in a
\textit{reactive} (symmetric) part $T_{R}$:
\begin{equation}
  \label{eq:anto-9.1}
  T=T_{A}+T_{R},
\end{equation}
where $T_{R}$ does not work in any admissible motion, e.g., $T_{R}
\in {\cal{M}}^\perp$. Consistently with the Principle, the active
stress $T_{A}$ belongs to ${\cal{M}}$ and, in cartesian
coordinates, we have
\begin{equation}
  \label{eq:anto-9.2}
  T_{A} = T_{A \alpha\beta} e_\alpha \otimes e_\beta, \quad
  \alpha, \beta=1,2,
\end{equation}
\begin{equation}
  \label{eq:anto-9.3}
  T_{R} = T_{R \alpha 3} e_\alpha \otimes e_3 + T_{R 3\alpha} e_3 \otimes
  e_\alpha + T_{R 33} e_3 \otimes e_3.
\end{equation}
In linear theory, on assuming the reference configuration
unstressed, the active stress in a point $(x,x_3)$ of the plate,
$x \in \overline{\Omega}$ and $|x_3|\leq h/2$, is given by a
linear mapping {}from ${\cal{M}}$ into itself by means of the
fourth order \textit{elasticity tensor} $\C_{\cal{M}}$:
\begin{equation}
  \label{eq:anto-9.4}
  T_{A} = \C_{\cal{M}} E[u].
\end{equation}
Here, in view of \eqref{eq:anto-8.6} and \eqref{eq:anto-9.2},
$\C_{\cal{M}}$ can be assumed to belong to ${\mathcal L}
({\mathbb{M}}^{2}, {\mathbb{M}}^{2})$. Moreover, we assume that
$\C_{\cal{M}}$ is constant over the thickness of the plate and
satisfies the minor and major symmetry conditions expressed in
cartesian coordinates as (we drop the subscript ${\cal{M}}$)
\begin{equation}
  \label{eq:anto-9.5}
  C_{\alpha \beta \gamma \delta}=C_{\beta \alpha \gamma \delta}= C_{\alpha \beta \delta \gamma
  }=C_{\gamma \delta \alpha \beta }, \quad \alpha, \beta, \gamma,
  \delta= 1,2, \quad \hbox{in } \Omega.
\end{equation}
We refer to \cite{Podio-Guidugli} and \cite{Lembo-Podio} for a
representation formula of $\mathbb C$ based on the maximal
response symmetry of the material compatible with the internal
constraints.

Using \eqref{eq:anto-9.1} and recalling \eqref{eq:anto-5.1}, we
obtain the corresponding decomposition for $T^{\Omega}$ and
$M^{\Omega}$:
\begin{equation}
  \label{eq:anto-10.1}
    T^{\Omega}=T^{\Omega(s)}_R, \quad  M^{\Omega}=M^{\Omega}_A,
\end{equation}
that is the shear forces and the moments have reactive and active
nature, respectively. By \eqref{eq:anto-9.4}, after integration
over the thickness, the surface couple tensor is given by
\begin{equation}
  \label{eq:anto-10.2}
    M^{\Omega}(x) = - \frac{h^3}{12}  {\cal{E}}\C(x) (\nabla_x^2
    w(x)), \quad \hbox{in } \Omega,
\end{equation}
where ${\cal{E}} \in \M^2$ has cartesian components
${\cal{E}}_{11}={\cal{E}}_{11}=0$, ${\cal{E}}_{12}=-1$,
${\cal{E}}_{21}=1$. Constitutive equation \eqref{eq:anto-10.2} can
be written in more expressive way in terms of the bending and
twisting moments as follows:
\begin{equation}
  \label{eq:anto-10.3}
    M_{\alpha\beta}(w)= - P_{\alpha \beta \gamma \delta}(x)
    w,_{\gamma \delta}, \quad \alpha, \beta=1,2,
\end{equation}
where
\begin{equation}
  \label{eq:anto-10.4}
    \mathbb P(x) = \frac{h^3}{12} \C(x), \quad \hbox{in } \Omega,
\end{equation}
is the \textit{plate elasticity tensor}. Combining
\eqref{eq:anto-7.3} and \eqref{eq:anto-7.4}, and by eliminating
the reactive term $T^{\Omega(s)}_R$, we obtain the classical
partial differential equation of the Kirchhoff-Love's bending
theory of thin elastic plates, that, written in cartesian
coordinates, takes the form
\begin{equation}
  \label{eq:anto-11.1}
    (P_{\alpha \beta \gamma \delta}(x)
    w,_{\gamma \delta}(x)),_{\alpha \beta}=q, \quad \hbox{in }
    \Omega.
\end{equation}
In the remaining part of this section we complete the formulation
of the equilibrium problem for a Kirchhoff-Love plate by writing
the boundary conditions corresponding to
\eqref{eq:anto-8.3}--\eqref{eq:anto-8.5}. The determination of
these boundary conditions is not a trivial issue because, first, a
constitutive equation for shear forces is not available since
these have a reactive nature (see \eqref{eq:anto-10.1}), and,
second, because the three mechanical boundary conditions
\eqref{eq:anto-8.3}--\eqref{eq:anto-8.5} should reasonably
''collapse'' into two independent boundary conditions for the
fourth order partial differential equation \eqref{eq:anto-11.1}.
To this aim, under the additional assumption of $\C$ positive
definite, we adopt a variational approach and we impose the
stationarity condition on the \textit{total potential energy}
functional $J$ of the plate. Consider the space of regular
kinematically admissible displacements
\begin{equation}
  \label{eq:anto-11.2}
  {\cal{D}}=\{v:\Omega\times(-h/2,h/2) \rightarrow \R^3 | \ v(x,x_3)=\eta(x)e_3 -x_3 \nabla_x\eta(x), \hbox{ with } \eta:\Omega \rightarrow \R \}.
\end{equation}
The energy functional $J: {\cal{D}} \rightarrow \R$ is defined as
\begin{equation}
  \label{eq:anto-11.3}
  J(v)=a(v,v)-l(v),
\end{equation}
where $a(v,v)$ is interpreted as the \textit{elastic energy}
stored in the plate for the displacement field $v$ and $l(v)$ is
the \textit{load potential} that accounts for the energy of the
system of applied loads $q$, $\widehat{M}$, $\widehat{Q}$. The
three-dimensional expression of the elastic energy in the Linear
Theory of Elasticity is given by
\begin{equation}
  \label{eq:anto-12.1}
  a(v,v)=\frac{1}{2} \int_{-h/2}^{h/2} \int_\Omega \C E[v] \cdot
  E[v] dx dx_3,
\end{equation}
where, in view of \eqref{eq:anto-11.2}, $E[v]=-x_3
\nabla_x^2\eta(x)$. After integration over the thickness, we
obtain
\begin{equation}
  \label{eq:anto-12.2}
  a(v,v)=- \frac{1}{2} \int_\Omega M_{\alpha\beta}(\eta) \eta,_{\alpha\beta}
  dx,
\end{equation}
where $M_{\alpha\beta}(\eta)$ are as in \eqref{eq:anto-10.3} with
$w$ replaced by $\eta$. The load functional has the expression
\begin{equation}
  \label{eq:anto-12.3}
  l(v)=\int_\Omega q\eta dx + \int_{\partial\Omega}
  (\widehat{Q}\eta + \widehat{M}_2 \eta,_2 - \widehat{M}_1
  \eta,_1) ds .
\end{equation}
Then, the stationarity condition (in fact, minimum condition) on
$J$ at $w$ yields
\begin{equation}
  \label{eq:anto-13.1}
  \int_\Omega M_{\alpha\beta}(w)\eta,_{\alpha\beta} dx +
  \int_\Omega q\eta + \int_{\partial\Omega}
  (\widehat{Q}\eta + \widehat{M}_2 \eta,_2 - \widehat{M}_1
  \eta,_1) ds=0,
\end{equation}
for every regular function $\eta$. Integrating by parts twice on
the first integral we obtain
\begin{multline}
  \label{eq:anto-13.2}
  \int_\Omega (M_{\alpha\beta,\alpha\beta }(w)+q)\eta dx +
  \int_{\partial\Omega} ( - M_{\alpha\beta,\beta }(w)n_\alpha +
  \widehat{Q})\eta  ds + \\
  + \int_{\partial\Omega} (M_{\alpha\beta}(w)n_\beta \eta_\alpha + \widehat{M}_2 \eta,_2 - \widehat{M}_1
  \eta,_1) ds=0.
\end{multline}
We elaborate the last integral $I_1$ of \eqref{eq:anto-13.2} by
rewriting the first order derivatives of $\eta$ on $\partial
\Omega$ in terms of the normal and arc-length derivative of
$\eta$. We have
\begin{multline}
  \label{eq:anto-13.3}
  I_1= \int_{\partial\Omega} (M_{\alpha\beta}(w)n_\beta n_\alpha - \widehat{M}_1 n_1 + \widehat{M}_2 n_2)\eta,_n ds +
  \\
  + \int_{\partial\Omega} (M_{\alpha\beta}(w)n_\beta \tau_\alpha + \widehat{M}_1 n_2 + \widehat{M}_2 n_1)\eta,_s
  ds = I_1' + I_1''
\end{multline}
and, integrating by parts on $\partial \Omega$, we get
\begin{multline}
  \label{eq:anto-13.4}
  I_1'= (M_{\alpha\beta}(w)n_\beta \tau_\alpha + \widehat{M}_1 n_2 + \widehat{M}_2
  n_1)\eta|_{s=0}^{s={{ {l}}(\partial \Omega)}}-
  \\
  - \int_{\partial\Omega} (M_{\alpha\beta}(w)n_\beta \tau_\alpha + \widehat{M}_1 n_2 + \widehat{M}_2
  n_1),_s \eta ds,
\end{multline}
where $ { {l}}(\partial \Omega)$ is the length of $\partial
\Omega$. Since $\partial \Omega$ is of class $C^{1,1}$, the
boundary term on the right end side of \eqref{eq:anto-13.4}
identically vanishes. Therefore, the stationarity condition of $J$
at $w$ takes the final form
\begin{multline}
  \label{eq:anto-14.1}
    \int_\Omega (M_{\alpha\beta,\alpha\beta }(w)+q)\eta dx + \\
    \int_{\partial\Omega}
    \left (
    -(M_{\alpha\beta}(w)n_\beta \tau_\alpha),_s - M_{\alpha\beta,\beta
    }(w)n_\alpha + \widehat{Q} - (\widehat{M}_1 n_2 + \widehat{M}_2
  n_1),_s
  \right )\eta ds + \\
  +
    \int_{\partial\Omega}
    \left (
    M_{\alpha\beta}(w)n_\beta n_\alpha - \widehat{M}_1 n_1 + \widehat{M}_2 n_2
    \right )
    \eta,_n ds =0
\end{multline}
for every $\eta \in C^\infty( \overline{\Omega},\R) $. By the
arbitrariness of the function $\eta$, and of the traces of $\eta$
and $\eta,_n$ on $\partial \Omega$, we determine the equilibrium
equation \eqref{eq:anto-11.1} and the desired Neumann boundary
conditions on $\partial \Omega$:
\begin{equation}
  \label{eq:anto-14.2}
  M_{\alpha\beta}(w)n_\alpha n_\beta = \widehat{M}_n,
\end{equation}
\begin{equation}
  \label{eq:anto-14.3}
  M_{\alpha\beta,\beta}(w)n_\alpha + (M_{\alpha\beta}(w)n_\beta \tau_\alpha),_s = \widehat{Q} -(\widehat{M}_\tau),_s.
\end{equation}

\section{Uniqueness and stability of extreme inclusions and free boundaries} \label{sec:uniq_stab}
\subsection{Rigid inclusions: uniqueness} \label{subsec:rigid_uniq}

In the sequel we shall assume that the plate is made of nonhomogeneous linear
elastic material with plate tensor
\begin{equation}
\label{eq:P_def}
   \mathbb{P}=\frac{h^3}{12}\mathbb{C},
\end{equation}
where the elasticity tensor $\C(x) \in{\cal L}
({\M}^{2}, {\M}^{2})$ has cartesian components
$C_{ijkl}$ which satisfy the following symmetry conditions
\begin{equation}
  \label{eq:sym-conditions-C-components}
    C_{ijkl} = C_{ klij} =
    C_{ klji} \quad i,j,k,l
    =1,2, \hbox{ a.e. in } \Omega,
\end{equation}
and, for simplicity, is defined in all of $\R^2$.

We make the following assumptions:

{I)} \textit{Regularity}
\begin{equation}
  \label{eq:3.bound}
  \mathbb{C} \in C^{1,1}(\R^2,   {\mathcal L} ({\mathbb{M}}^{2},
  {\mathbb{M}}^{2})),
\end{equation}

{II)} \textit{Ellipticity (strong convexity)} There exists $\gamma>0$
such that

\begin{equation}
  \label{eq:3.convex}
    {\mathbb{C}}A \cdot A \geq  \gamma |A|^2, \qquad \hbox{in } \R^2,
\end{equation}
for every $2\times 2$ symmetric matrix $A$.

{III)} \textit{Dichotomy condition}
\begin{subequations}
\begin{eqnarray}
\label{3.D(x)bound} either &&  {\mathcal{D}}(x)>0,
\quad\hbox{for every } x\in \R^2, \\[2mm]
\label{3.D(x)bound 2} or && {\mathcal{D}}(x)=0, \quad\hbox{for every }
x\in \R^2,
\end{eqnarray}
\end{subequations}
where
\begin{equation}
    \label{3.D(x)}
    {\mathcal{D}}(x)= \frac{1}{a_0} |\det S(x)|,
\end{equation}
\begin{equation}
    \label{3. S(x)}
    S(x) = {\left(
\begin{array}{ccccccc}
  a_0   & a_1   & a_2   & a_3   & a_4   & 0    &    0    \\
  0     & a_0   & a_1   & a_2   & a_3   & a_4  &    0    \\
  0     & 0     & a_0   & a_1   & a_2   & a_3  &    a_4  \\
  4a_0  & 3a_1  & 2a_2  & a_3   & 0     & 0    &    0    \\
  0     & 4a_0  & 3a_1  & 2a_2  & a_3   & 0    &    0    \\
  0     & 0     & 4a_0  & 3a_1  & 2a_2  & a_3  &    0    \\
  0     & 0     & 0     & 4a_0  & 3a_1  & 2a_2 &    a_3  \\
  \end{array}
\right)},
\end{equation}
\begin{equation}
    \label{3.coeffsmall}
    a_0=A_0, \ a_1=4C_0, \ a_2=2B_0+4E_0, \ a_3=4D_0, \ a_4=F_0.
\end{equation}
and
\begin{center}
\( {\displaystyle \left\{
\begin{array}{lr}
    C_{1111}=A_0, \ \ C_{1122}=C_{2211}=B_0,
         \vspace{0.12em}\\
    C_{1112}=C_{1121}=C_{1211}=C_{2111}=C_0,
        \vspace{0.12em}\\
    C_{2212}=C_{2221}=C_{1222}=C_{2122}=D_0,
        \vspace{0.12em}\\
    C_{1212}=C_{1221}=C_{2112}=C_{2121}=E_0,
        \vspace{0.12em}\\
    C_{2222}=F_0,
        \vspace{0.25em}\\
\end{array}
\right. } \) \vskip -3.0em
\begin{eqnarray}
\ & & \label{3.coeff6}
\end{eqnarray}
\end{center}

\begin{rem}
  \label{rem:dichotomy} Whenever \eqref{3.D(x)bound} holds we denote
\begin{equation}
    \label{delta-1}
    \delta_1=\min_{\R^2}{\mathcal{D}}.
\end{equation}
We emphasize that, in all the following statements, whenever a
constant is said to depend on $\delta_1$ (among other quantities) it
is understood that such dependence occurs \textit{only} when
\eqref{3.D(x)bound} holds.
\end{rem}

On the assigned couple field $\widehat M$ let us require the
following assumptions:
\begin{equation}
  \label{eq:3.emme_reg}
  \widehat M\in H^{-\frac{1}{2}}(\partial\Omega,\mathbb{R}^2), \quad (\widehat M_n,\widehat M_{\tau,s}) \not\equiv 0,
\end{equation}
\begin{equation}
  \label{eq:3.emme_comp}
  \int_{\partial\Omega}\widehat M_i=0,\qquad i=1,2.
\end{equation}

\begin{theo} [Unique determination of a rigid inclusion with one measurement]
\label{theo:main-unknown-rig-incl}
Let $\Omega$ be a simply connected domain in $\mathbb{R}^2$ such
that $\partial\Omega$ is of class $C^{1,1}$ and let $D_i$,
$i=1,2$, be two simply connected domains compactly contained in
$\Omega$, such that $\partial D_i$ is of class $C^{3,1}$, $i=1,2$.
Moreover, let $\Gamma$ be a nonempty open portion of
$\partial\Omega$, of class $C^{3,1}$. Let the plate tensor
$\mathbb{P}$ be given by \eqref{eq:P_def}, and satisfying
\eqref{eq:sym-conditions-C-components}--\eqref{eq:3.convex} and
the dichotomy condition \eqref{3.D(x)bound} or \eqref{3.D(x)bound
2}. Let $\widehat M$ be a boundary couple field satisfying
\eqref{eq:3.emme_reg}--\eqref{eq:3.emme_comp}. Let $w_i$, $i=1,2$,
be the solutions to the mixed problem
\eqref{eq:1.dir-pbm-incl-rig-1bis}--\eqref{eq:1-dir-pbm-incl-rig-5bis},
coupled with \eqref{eq:1.equil-rigid-incl}, with $D=D_i$.

If there exists $g \in \mathcal{A} $ such that
\begin{equation}
    \label{eq:3.theorem2-hp}
w_1 - w_2 = g,\quad (w_1 - w_2)_{,n} =  g_{,n}, \quad \hbox{on
}\Gamma,
\end{equation}
then
\begin{equation}
    \label{eq:3.theorem2-tesi}
    D_1=D_2.
\end{equation}
\end{theo}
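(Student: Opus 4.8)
The plan is to argue by contradiction, combining a weak unique continuation argument in the exterior region with a Runge-type/doubling argument driven by the three sphere inequality \eqref{eq:3sph}. Suppose $D_1 \neq D_2$. After the reduction explained in the Introduction, each $w_i$ solves the plate equation ${\rm div}({\rm div}({\mathbb P}\nabla^2 w_i))=0$ in $\Omega\setminus\overline{D_i}$, with homogeneous Dirichlet data $w_i = (w_i)_{,n}=0$ on $\partial D_i$, and with the same couple field $\widehat M$ on $\partial\Omega$. The Cauchy data of $w_1$ and $w_2$ agree on $\Gamma$ up to an affine function $g$, so replacing $w_2$ by $w_2+g$ (which changes neither the equation nor the boundary conditions on $\partial\Omega$ since $g$ is affine and $\nabla^2 g=0$, and which only shifts the Dirichlet constant on $\partial D_2$) we may assume $w_1=w_2$, $(w_1)_{,n}=(w_2)_{,n}$ on $\Gamma$.

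First I would set $w = w_1 - w_2$ in the open connected component $G$ of $\Omega\setminus(\overline{D_1}\cup\overline{D_2})$ whose closure meets $\Gamma$. There $w$ solves the plate equation, and on $\Gamma$ it has vanishing Cauchy data (value, normal derivative, and — using the Neumann conditions \eqref{eq:1.dir-pbm-incl-rig-2bis}--\eqref{eq:1.dir-pbm-incl-rig-3bis} which coincide for $w_1$ and $w_2$ — the second-order Neumann data as well). By the weak unique continuation property for solutions of the plate equation under the dichotomy condition (a consequence of \eqref{eq:3sph}), $w \equiv 0$ in $G$, i.e.\ $w_1 = w_2$ throughout $G$. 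This is the step that propagates the boundary coincidence into the bulk, and it requires care in handling the Cauchy problem across $\Gamma$ with only $C^{3,1}$ regularity: one extends $\Omega$ slightly across $\Gamma$, uses the Neumann data to continue $w$ by zero (or by a suitable $H^2$ extension) into a collar, and applies the three sphere inequality iteratively along a chain of balls to conclude $w\equiv 0$.

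Next I would exploit $w_1=w_2$ on $G$ to derive a contradiction from the geometry. Without loss of generality (relabelling), there is a point $x_0 \in \partial D_1$ with $x_0 \in \Omega\setminus\overline{D_2}$, and a ball $B_\rho(x_0)\subset\Omega\setminus\overline{D_2}$ contained, on the side away from $D_1$, in $G$. Since $w_1=w_2$ on $G\cap B_\rho(x_0)$ and $w_2$ solves the plate equation across all of $B_\rho(x_0)$ (it is regular there, being away from $D_2$), the function $w_1$ — which on the $G$-side equals the smooth $w_2$ — extends to a solution of the plate equation on a neighbourhood reaching up to and across $\partial D_1$ near $x_0$; but on the $D_1$-side $w_1$ satisfies $w_1 = (w_1)_{,n}=0$ on $\partial D_1$. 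Combining: $w_2$ satisfies homogeneous Cauchy data on the hypersurface $\partial D_1 \cap B_\rho(x_0)$, hence by unique continuation (again \eqref{eq:3sph}) $w_2 \equiv 0$ in a neighbourhood of $x_0$, and then, unique continuation propagates $w_2\equiv 0$ throughout $\Omega\setminus\overline{D_2}$. That forces $\nabla^2 w_2 \equiv 0$, so the couple field $\widehat M$ reconstructed from $w_2$ via \eqref{eq:1.dir-pbm-incl-rig-2bis}--\eqref{eq:1.dir-pbm-incl-rig-3bis} has $(\widehat M_n, \widehat M_{\tau,s}) \equiv 0$, contradicting the nontriviality hypothesis \eqref{eq:3.emme_reg}. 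Therefore $D_1=D_2$.

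The main obstacle I anticipate is the unique continuation step \emph{from the boundary} $\partial D_1$: to conclude $w_2 \equiv 0$ one needs that $w_2$ having vanishing Cauchy data on a $C^{3,1}$ piece of $\partial D_1$ implies $w_2$ vanishes in a full neighbourhood. For a fourth-order operator this is subtler than for second-order equations; one must flatten $\partial D_1$ locally, reflect or extend $w_2$ across it preserving enough regularity to apply the interior three sphere inequality to the extended function, and control the loss of derivatives. The $C^{3,1}$ regularity assumed on $\partial D_i$ and on $\Gamma$ is exactly what makes this feasible. A secondary technical point is the bookkeeping of connected components of $\Omega\setminus(\overline{D_1}\cup\overline{D_2})$ — one must ensure that the component reaching $\Gamma$ also reaches, along a chain, a neighbourhood of some boundary point of the symmetric difference $D_1 \triangle D_2$; this uses that $\Omega$, $D_1$, $D_2$ are simply connected and that the three sphere inequality can be iterated along any path staying in the region where $w$ is known to solve the equation.
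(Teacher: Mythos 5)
Your first stage (vanishing of $w_1-w_2-g$ in the component $G$ reaching $\Gamma$) is sound and matches the paper: on $\Gamma$ the difference has \emph{all four} Cauchy data equal to zero --- the values and normal derivatives by hypothesis \eqref{eq:3.theorem2-hp}, the second- and third-order (Neumann) data because both $w_i$ carry the same couple field $\widehat M$ --- so uniqueness for the Cauchy problem and weak unique continuation apply. The second stage, however, has a genuine gap. On $\partial D_1\cap B_\rho(x_0)$ you only know that $w_2+g$ and $\nabla(w_2+g)$ vanish, i.e.\ the \emph{Dirichlet half} of the Cauchy data for a fourth-order operator. That is not enough to conclude $w_2+g\equiv 0$ near $x_0$: the biharmonic function $u=x_1^2$ vanishes together with its gradient on $\{x_1=0\}$ and is not identically zero, so no local unique continuation statement of the kind you invoke can hold. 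The extension-by-zero you propose produces an $H^2$ function that is \emph{not} a solution across the interface (the jumps in the second and third derivatives act as interface sources), so the three sphere inequality cannot be applied to it; this is exactly the difference from the situation on $\Gamma$, where the matching Neumann data make the continuation across the boundary legitimate.

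The paper's substitute for this step is an energy argument, and it is the key idea you are missing. One integrates by parts $\int(\mathrm{div}\,\mathrm{div}(\mathbb P\nabla^2 w_1))\,w_1=0$ over the region $\Omega\setminus(\overline{D_1\cup G})$, whose boundary splits into $\Sigma_1\subset\partial D_1$ (where $w_1=\nabla w_1=0$ by the homogeneous Dirichlet conditions) and $\Sigma_2=\partial D_2\cap\partial G$ (where $w_1=w_2$ once one shows $g\equiv0$ by evaluating at a point of $\partial D_1\cap\Sigma_2$; if that intersection is empty one instead integrates over $D_2$ using $w_1-g=w_2$ there). All boundary terms pair $w_1$ or $\nabla w_1$ against higher-order quantities, hence vanish, giving $\int\mathbb P\nabla^2 w_1\cdot\nabla^2 w_1=0$ and so $\nabla^2 w_1\equiv0$ on the open set $D_2\setminus\overline{D_1}$; only \emph{then} does weak unique continuation (in the interior, applied to the hessian) force $w_1$ to be affine in all of $\Omega\setminus\overline{D_1}$, contradicting $(\widehat M_n,\widehat M_{\tau,s})\not\equiv0$. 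Note also that absorbing $g$ into $w_2$ is not as innocuous as you state: after the shift, $w_2+g$ equals the affine function $g$ on $\partial D_2$, not a constant, and the paper must separately prove $g\equiv0$ (case i)) or carry $g$ through the integration by parts (case ii)).
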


\begin{proof}[Proof of Theorem \ref{theo:main-unknown-rig-incl}]

Let $G$ be the connected component of
$\Omega\setminus(\overline{D_1\cup D_2})$ such that
$\Gamma\subset\partial G$. Let us notice that, since $w_i$ satisfies homogeneous Dirichlet conditions on
the $C^{3,1}$ boundary $\partial D_i$, by regularity results we
have that $w_i \in H^4(\widetilde{\Omega}  \setminus D_i)$, for
every $\widetilde{\Omega}$, $D_i \subset\subset
{\widetilde{\Omega} } \subset\subset \Omega$, $i=1,2$ (see, for
example, \cite{l:agmon}). By Sobolev embedding theorems (see, for
instance, \cite{l:ad}), we have that $w_i$ and $\nabla w_i$ are
continuous up to $\partial D_i$, $i=1,2$. Therefore
\begin{equation}
  \label{eq:4.null_vi}
  w_i\equiv 0, \quad \nabla w_i^e\equiv 0, \quad \hbox{on }\partial D_i.
\end{equation}
Let $w=w_1-w_2-g$, with $g(x_1,x_2)=ax_1+bx_2+c$. By our
assumptions, $w$ takes homogeneous Cauchy data on $\Gamma$. {}From
the uniqueness of the solution to the Cauchy problem (see, for
instance, Theorem $3.8$ in \cite{M-R-V4}) and also Remark 4 in
\cite{M-R-V2}) and {}from the weak unique continuation property,
we have that
\begin{equation*}
\qquad\qquad w\equiv 0, \quad\hbox{in }G.
\end{equation*}
Let us prove, for instance, that $D_2\subset D_1$.
We have
\begin{equation*}
  D_2\setminus \overline{D_1}\subset\Omega\setminus(\overline{D_1\cup
  G}),
\end{equation*}

\begin{equation*}
  \partial(\Omega\setminus(\overline{D_1\cup G}))=\Sigma_1\cup\Sigma_2,
\end{equation*}
where $\Sigma_2=\partial D_2\cap\partial G$,
$\Sigma_1=\partial(\Omega\setminus(\overline{D_1\cup G}))\setminus \Sigma_2\subset\partial D_1$.

Let us distinguish two cases

\medskip

i) $\partial D_1\cap\Sigma_2\neq\emptyset$,

ii) $\partial D_1\cap\Sigma_2=\emptyset$.

In case i), there exists $ P_0 \in \partial D_1\cap\Sigma_2$. Since $w_i(P_0)=0$, $w(P_0)=0$, we have $g(P_0)=0$.

Let $P_n\in G$, $ P_n\rightarrow P_0$. We have

\begin{equation*}
  \nabla w(P_n)=0,
\end{equation*}
\begin{equation*}
  0=\lim_{n\rightarrow \infty}\nabla w(P_n)=\nabla w_1^e(P_0)-\nabla w_2^e(P_0)-(a,b)=-(a,b),
\end{equation*}
so that $g\equiv c$,
and, since $g(P_0)=0$,
\begin{equation}
   \label{eq:3.tutti_nulli}
  g\equiv 0 \quad\Rightarrow \quad w_1\equiv w_2, \ \hbox{in } G, \quad\Rightarrow \quad
  \left\{
\begin{array}{lr}
 w_1\equiv w_2=0, \ \hbox{on } \Sigma_2,
    \vspace{0.25em}\\
  \nabla w_1\equiv  \nabla w_2=0, \ \hbox{on } \Sigma_2 .
\end{array}
\right.
\end{equation}

Integrating by parts equation $({\rm div}({\rm div} ({\mathbb
P}\nabla^2 w_1)))w_1=0$, we obtain
\begin{multline}
   \label{eq:3.by_parts_Diri}
  \int_{\Omega\setminus(\overline{D_1\cup G})}\mathbb{P}\nabla^2w_1 \cdot \nabla^2w_1=
  \int_{\Sigma_1\cup\Sigma_2} (\mathbb{P}\nabla^2w_1 \nu\cdot \nu) w_{1,n} +\\
  + \int_{\Sigma_1\cup\Sigma_2} (\mathbb{P}\nabla^2w_1 \nu\cdot \tau) w_{1,s}
  - \int_{\Sigma_1\cup\Sigma_2} (\mathrm{div}(\mathbb{P}\nabla^2w_1)\cdot\nu )w_1,
\end{multline}
where $\nu$ is the outer unit normal to $\Omega\setminus(\overline{D_1\cup G})$.
By
\begin{equation*}
  w_1=0,\ \nabla w_1=0, \hbox{ on }\Sigma_1,
\end{equation*}
and by \eqref{eq:3.tutti_nulli}, we have
\begin{equation*}
  w_1=w_2=0,\ \nabla w_1= \nabla w_2=0, \hbox{ on }\Sigma_2,
\end{equation*}
so that
\begin{equation}
   \label{3.null_int}
0=\int_{\Omega\setminus(\overline{D_1\cup
G})}\mathbb{P}\nabla^2w_1 \cdot \nabla^2w_1\geq \gamma
\int_{D_2\setminus \overline{D_1}} |\nabla^2w_1|^2.
\end{equation}
If $D_2\setminus \overline{D_1}\neq \emptyset$, then, by the weak unique continuation principle, $w_1$ coincides with an affine function in
$\Omega\setminus\overline{D_1}$, contradicting the choice of the nontrivial Neumann data $\widehat{M}$ on $\partial\Omega$.
Therefore $D_2\subset \overline{D_1}$ and, by the
regularity of $D_i$, $i=1,2$, $D_2\subset D_1$.

In case ii),
either $\overline{D_1}\cap \overline{D_2} =\emptyset$ or $\overline{D_1}\subset D_2$.

Let us consider for instance the first case, the proof of the
second case being similar. Integrating by parts, we have
\begin{multline*}
  \int_{D_2}\mathbb{P}\nabla^2w_1 \cdot  \nabla^2w_1=\int_{D_2}\mathbb{P}\nabla^2w_1 \cdot  \nabla^2(w_1-g)=\\
  \int_{\partial D_2} (\mathbb{P}\nabla^2w_1 \nu\cdot \nu) (w_1-g),_n +\\
  + \int_{\partial D_2} (\mathbb{P}\nabla^2w_1 \nu\cdot \tau) (w_1-g),_s
  - \int_{\partial D_2} (\mathrm{div}(\mathbb{P}\nabla^2w_1)\cdot\nu )(w_1-g).
\end{multline*}
By the regularity of $\partial D_2$, we may rewrite it as
\begin{multline*}
  \int_{D_2}\mathbb{P}\nabla^2w_1 \cdot \nabla^2w_1=
  \int_{\partial D_2} ( \mathbb{P}\nabla^2w_1 \nu\cdot \nu )(w_1-g),_n + \\
  -\int_{\partial D_2} \left((\mathbb{P}\nabla^2w_1 \nu\cdot \tau),_s
  +  \mathrm{div}(\mathbb{P}\nabla^2w_1)\cdot\nu\right) (w_1-g),
\end{multline*}

Recalling that $w_1-g=w_2=0$, $(w_1-g),_n=w_{2,n}=0$ on $\partial D_2$, we have that
\begin{equation*}
  \int_{D_2}\mathbb{P}\nabla^2w_1 \cdot \nabla^2w_1=0.
\end{equation*}

If $D_2\neq\emptyset$, then  $w_1$ coincides with an affine function in $D_2$, and, by the weak unique continuation principle, also in
$\Omega\setminus \overline{D_1}$, contradicting the choice of a nontrivial $\widehat{M}$.
Therefore $D_2=\emptyset$.
Symmetrically, we obtain that $D_1=\emptyset$, that is $D_1=D_2$.

\end{proof}

\subsection{Rigid inclusions: stability} \label{subsec:stab}

In order to prove the stability estimates, we need the following further quantitative assumptions.

Given $\rho_0$, $M_0$, $M_1>0$, we assume that
\begin{equation}
   \label{eq:bound_area}
|\Omega|\leq M_1\rho_0^2,
\end{equation}
\begin{equation}
   \label{eq:compactness}
    \hbox{dist}(D, \partial \Omega) \geq \rho_0,
\end{equation}
\begin{equation}
   \label{eq:reg_Omega}
\partial\Omega \hbox{ is of } class\  C^{2,1}
\ with\ constants\ \rho_0, M_0,
\end{equation}
\begin{equation}
   \label{eq:reg_Sigma}
\Gamma \hbox{ is of } class\  C^{3,1} \ with\ constants\
\rho_0, M_0,
\end{equation}
\begin{equation}
   \label{eq:reg_D}
\partial D \hbox{ is of } class\  C^{3,1}
\ with\ constants\ \rho_0, M_0,
\end{equation}
where $|\Omega|$ denotes the Lebesgue measure of $\Omega$.
Moreover, we assume that for some $P_0\in\Sigma$ and some $\delta_0$, $0<\delta_0<1$,
\begin{equation}
   \label{eq:large_enough}
   \partial\Omega\cap
R_{\frac{\rho_0}{M_0},\rho_0}(P_0)\subset\Gamma,
\end{equation}
and that
\begin{equation}
   \label{eq:small_enough}
   |\Gamma|\leq(1-\delta_0)|\partial\Omega|.
\end{equation}

On the Neumann data $\widehat{M}$ we assume that
\begin{equation}
   \label{eq:reg_M}
\widehat{M}\in L^2(\partial \Omega,\R^2),\quad
(\widehat{M}_n,(\widehat{M}_\tau),_s)\not\equiv 0,
\end{equation}
\begin{equation}
   \label{eq:M_comp}
    \int_{\partial \Omega} \widehat{M}_i = 0, \quad i=1,2,
\end{equation}
\begin{equation}
   \label{eq:supp_M}
\hbox{supp}(\widehat{M})\subset\subset\Gamma,
\end{equation}
and that, for a given constant $F>0$,
\begin{equation}
\label{eq:M_frequency}
   \frac{\|\widehat{M}\|_{L^2(\partial
   \Omega ,\R^2)}}{ \|\widehat{M}\|_{H^{-\frac{1}{2}}(\partial \Omega,\R^2)}}\leq
   F.
\end{equation}

On the elasticity tensor $\mathbb C$, we assume the same a priori
information made in Subsection \ref{subsec:rigid_uniq}, and we
introduce a parameter $M>0$ such that
\begin{equation}
  \label{eq:3.bound_quantit}
  \sum_{i,j,k,l=1}^2 \sum_{m=0}^2 \rho_0^m \|\nabla^m C_{ijkl}\|_{L^\infty(\R^2)} \leq
    M.
\end{equation}

We shall refer to the set of constants $M_0$, $M_1$, $\delta_0$,
$F$, $\gamma$, $M$, $\delta_1$ as the \emph{a priori data}. The
scale parameter $\rho_0$ will appear explicitly in all formulas,
whereas the dependence on the thickness parameter $h$ will be
omitted.

\begin{theo}[Stability result]
  \label{theo:Main}
Let $\Omega$ be a bounded domain in $\R^2$ satisfying
\eqref{eq:bound_area} and \eqref{eq:reg_Omega}. Let $D_i$,
$i=1,2$, be two simply connected open subsets of $\Omega$
satisfying \eqref{eq:compactness} and \eqref{eq:reg_D}. Moreover,
let $\Gamma$ be an open portion of $\partial\Omega$ satisfying
\eqref{eq:reg_Sigma}, \eqref{eq:large_enough} and
\eqref{eq:small_enough}. Let $\widehat{M}\in
L^2(\partial\Omega,\R^2)$ satisfy
\eqref{eq:reg_M}--\eqref{eq:M_frequency} and let the plate tensor
$\mathbb{P}$ given by \eqref{eq:P_def} satisfy
\eqref{eq:sym-conditions-C-components},
\eqref{eq:3.bound_quantit}, \eqref{eq:3.convex} and the dichotomy
condition. Let $w_i\in H^2(\Omega \setminus \overline{D_i})$ be
the solution to
\eqref{eq:1.dir-pbm-incl-rig-1bis}--\eqref{eq:1-dir-pbm-incl-rig-5bis},
coupled with \eqref{eq:1.equil-rigid-incl}, when $D=D_i$, $i=1,2$.
If, given $\epsilon>0$, we have
\begin{equation}
    \label{eq:small_L2}
\min_{g \in \cal{A}} \left\{\|w_1 - w_2 -g \|_{L^2(\Sigma)}+\rho_0
\left\| (w_1 - w_2 -g)_{,n} \right\|_{L^2(\Sigma)}\right\}\leq
\epsilon,
\end{equation}
then we have
\begin{equation}
    \label{eq:small_Haus_bound}
d_{\cal H}(\partial D_1,\partial D_2) \leq C\rho_0 (\log|\log
\widetilde{\epsilon}|)^{-\eta}, \quad  0< \widetilde{\epsilon}
<e^{-1},
\end{equation}
and
\begin{equation}
    \label{eq:small_Haus_inclusion}
d_{\cal H}( \overline{D_1},\overline{D_2} ) \leq C\rho_0
(\log|\log \widetilde{\epsilon}|)^{-\eta}, \quad \
0<\widetilde{\epsilon}<e^{-1},
\end{equation}
where $\widetilde{\epsilon}=\frac{\epsilon}
{\rho_0^2\|\widehat{M}\|_{H^{-\frac{1}{2}}}}$ and  $C$, $\eta$, $C>0$, $0<\eta\leq 1$, are constants only
depending on the a priori data.
\end{theo}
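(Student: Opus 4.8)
The plan is to follow the classical scheme for stability of inverse problems with unknown boundaries (cf.\ \cite{A-B-R-V} in the conductivity case and \cite{M-R2} in elasticity), adapted to the fourth-order plate operator, with the three sphere inequality \eqref{eq:3sph} and the quantitative vanishing-rate estimate \eqref{eq:new 60} as the two quantitative building blocks. Set $w=w_1-w_2-\bar g$, where $\bar g\in\mathcal A$ realizes the minimum in \eqref{eq:small_L2}, and let $G$ be the connected component of $\Omega\setminus(\overline{D_1\cup D_2})$ whose boundary contains $\Gamma$; in $G$ both $w_i$, hence $w$, solve the homogeneous plate equation \eqref{eq:1.dir-pbm-incl-rig-1bis}, and by \eqref{eq:small_L2} the Cauchy data of $w$ on $\Gamma$ is controlled by $\widetilde\epsilon$ after the normalization built into that quantity.

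First I would prove a quantitative estimate of continuation from the Cauchy data on $\Gamma$: using \eqref{eq:3sph} (available thanks to the dichotomy condition) together with a Carleman/iteration argument exactly as in the second-order case, one obtains that $\int_{K}|\nabla^2 w|^2$ is dominated by a logarithmic-type modulus of $\widetilde\epsilon$ for every $K\subset\subset G$ at controlled distance from $\partial\Omega$, and, iterating \eqref{eq:3sph} along chains of balls, that the same smallness propagates towards $\partial D_1\cup\partial D_2$ with an exponent that degrades like a power of the distance to these boundaries.

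Next comes the core step, which is the quantitative counterpart of the proof of Theorem~\ref{theo:main-unknown-rig-incl}. After reducing the possible topological configurations of $D_1$ and $D_2$ as in that proof, one selects a point $\bar x$ on $\partial D_j\cap\partial G$ — say $j=1$ — at distance $\gtrsim d:=d_{\mathcal H}(\partial D_1,\partial D_2)$ from the other inclusion; assumption \eqref{eq:reg_D} forces each $D_j$ to contain a ball of radius $\sim\rho_0$, which guarantees that such a point and the balls below are non-degenerate. On the ball $B_{cd}(\bar x)$ the function $w_2+\bar g$ solves the plate equation \emph{across} $\partial D_1$, since the ball avoids $\overline{D_2}$, and on $\partial D_1$ it coincides with $-w$ because $w_1$ has homogeneous Cauchy data there. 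Combining the propagated smallness of $w$ with Poincar\'e's inequality for $w_1$ and $\nabla w_1$ (both vanishing on $\partial D_1$, with $\nabla^2 w_1$ bounded in $L^\infty$ by the a priori data) and invoking interior regularity for the plate equation, one shows that $\int_{B_{cd}(\bar x)\cap(\Omega\setminus\overline{D_1})}|\nabla^2 w_1|^2$ is bounded by a logarithmic-type modulus of $\widetilde\epsilon$, with polynomial corrections in $d$ and in an auxiliary near-boundary scale. Reconciling this upper bound with the lower bound $C\exp(-A(cd)^{-B})$ provided by \eqref{eq:new 60} at $\bar x$, and optimizing over the near-boundary scale, forces $d\le C\rho_0(\log|\log\widetilde\epsilon|)^{-\eta}$, which is \eqref{eq:small_Haus_bound}. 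Finally, \eqref{eq:small_Haus_inclusion} follows from \eqref{eq:small_Haus_bound} by the standard fact that, for simply connected domains with $C^{3,1}$ boundaries of equibounded constants, Hausdorff closeness of the boundaries implies Hausdorff closeness of the closures, degenerate configurations being ruled out by the nontriviality \eqref{eq:reg_M} of $\widehat M$ as in the proof of Theorem~\ref{theo:main-unknown-rig-incl}.

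The main obstacle is precisely this core step: making quantitative the propagation of smallness of $w$ up to — and the comparison across — the \emph{unknown} interface $\partial D_j$, with careful bookkeeping of the near-boundary scale and its optimization against $\widetilde\epsilon$ and $d$. The decisive limitation is that only the \emph{exponential} lower bound \eqref{eq:new 60} is at our disposal: a power-type bound, which would require a quantitative estimate of strong unique continuation at the boundary — unavailable for the plate operator, even in the isotropic case — would yield the single logarithm expected from Cauchy stability, whereas reconciling an exponential lower bound with a logarithmic upper bound degrades it to the double logarithm of \eqref{eq:small_Haus_bound}. A secondary technical point, handled via interior regularity and the dichotomy condition, is passing freely between estimates on $w$ and on the Hessian $\nabla^2 w$, which is the quantity naturally controlled by \eqref{eq:3sph}.
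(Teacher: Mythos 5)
Your overall architecture (propagation of smallness from the Cauchy data on $\Gamma$ via iterated three sphere inequalities, followed by a comparison with an exponential lower bound at the scale $d=d_{\cal H}(\partial D_1,\partial D_2)$) is the right one, and your explanation of why only a double logarithm is attainable matches the paper's. But the core comparison is set up on the wrong region, and as written it fails. You claim that $\int_{B_{cd}(\bar x)\cap(\Omega\setminus\overline{D_1})}|\nabla^2 w_1|^2$ is small for a point $\bar x\in\partial D_1\cap\partial G$ such that $B_{cd}(\bar x)$ avoids $\overline{D_2}$. That set lies in the common exterior of the two inclusions and, since $\bar x\in\partial G$, it meets $G$ in a nonempty open set; there $w_1$ is a full-strength solution driven by the nontrivial Neumann data, and \eqref{eq:new 60} says precisely that this integral is bounded \emph{below} by $C\exp\left(-A(cd)^{-B}\right)$ — it cannot be bounded by a modulus of $\widetilde{\epsilon}$. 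Only the difference $w=w_1-w_2-\bar g$ is small in $G$; $w_1$ itself is not. The quantity that actually becomes small is $\int_{D_2\setminus \overline{D_1}}|\nabla^2 w_1|^2$ (and symmetrically $\int_{D_1\setminus \overline{D_2}}|\nabla^2 w_2|^2$), i.e.\ the energy of $w_1$ \emph{inside the other inclusion}; since that region is not contained in $G$, this smallness is not obtained by direct propagation but by the quantitative version of the integration by parts over $\Omega\setminus(\overline{D_1\cup G})$ from the uniqueness proof, whose boundary terms are controlled because the Cauchy data of $w_1$ vanish on $\Sigma_1\subset\partial D_1$ and equal those of $w+\bar g$ on $\Sigma_2=\partial D_2\cap\partial G$. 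Your point selection is also unavailable: in the configuration $B_d(x_0)\subset D_2$ (with $x_0\in\partial D_1$ realizing $d$), the points of $\partial D_1$ at distance $\gtrsim d$ from $\partial D_2$ lie in the interior of $D_2$, hence not on $\partial G$. The correct dichotomy is taken at $x_0$ itself — either $B_d(x_0)\subset D_2$ or $B_d(x_0)\cap D_2=\emptyset$ — and in the first case one places an interior disk $B_{td}(x_1)\subset D_2\setminus \overline{D_1}$ and applies the interior Lipschitz propagation of smallness \eqref{eq:LPS} to $w_1$ there (the second case is symmetric with $w_2$); the ball must sit \emph{inside} the other inclusion, not avoid it.

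A second, quantitative gap: even with the comparison fixed, a single pass does not yield \eqref{eq:small_Haus_bound}. The first propagation of smallness, carried out with no regularity information on $\partial G$ (whose narrowings and cusps obstruct the chains of disks), only gives $\int_{D_2\setminus \overline{D_1}}|\nabla^2 w_1|^2\lesssim \rho_0^2\|\widehat{M}\|^2_{H^{-1/2}}(\log|\log\widetilde{\epsilon}|)^{-1/2}$, and reconciling this with $\exp(-Ad^{-B})$ produces a \emph{triple} logarithm. The paper then invokes a geometric lemma from \cite{A-B-R-V} guaranteeing that once this preliminary bound makes $d$ small, $\partial G$ is Lipschitz with controlled constants; rerunning the chains of disks in the now-regular $G$ upgrades the smallness to $|\log\widetilde{\epsilon}|^{-\sigma}$, and only then does the comparison give the double logarithm. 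Your ``optimization over the near-boundary scale'' does not substitute for this two-stage bootstrap.
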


\begin{proof} [Proof of Theorem ~\ref{theo:Main}]
Let us rough-out a sketch of the proof, referring the interested
reader to Section 3 of \cite{M-R-V5}. Retracing the proof of the
uniqueness theorem, the basic idea is that of deriving the
quantitative version in the stability context of the vanishing of
the integral $\int_{\Omega\setminus(\overline{D_i\cup
G})}\mathbb{P}\nabla^2w_i \cdot \nabla^2w_i$, for $i=1,2$, that is
a control with some small term emerging {}from the bound
\eqref{eq:small_L2} on the Cauchy data. To this aim, we need
stability estimates of continuation {}from Cauchy data and
propagation of smallness estimates. In particular, the propagation
of smallness of $|\nabla^2 w_i|$, for $i=1,2$, {}from a
neighboorhood of $\Gamma$ towards
$\partial(\Omega\setminus(\overline{D_i\cup G}))$ is performed
through iterated application of the three sphere inequality
\eqref{eq:3sph} over suitable chains of disks. A strong hindrance
which occurs in this step is related to the difficulty of getting
arbitrarily closer to the boundary of the set
$\Omega\setminus(\overline{D_i\cup G})$, due to the absence, in
our general setting, of any a priori information on the reciprocal
position of $D_1$ and $D_2$. For this reason, as a preparatory
step, we derive the following rough estimate

\begin{equation}
   \label{eq:Cauchy1}
\max\left\{
\int_{D_2\setminus \overline{D_1}}|\nabla^2 w_1|^2,
\int_{D_1\setminus \overline{D_2}}|\nabla^2 w_2|^2\right\}
\leq
\rho_0^2\|\widehat{M}\|_{H^{-\frac{1}{2}}}^2
(\log|\log \widetilde{\epsilon}|)^{-\frac{1}{2}},
\end{equation}
which holds for every
$\widetilde{\epsilon}<e^{-1}$, with $\widetilde{\epsilon}=\frac{\epsilon}
{\rho_0^2\|\widehat{M}\|_{H^{-\frac{1}{2}}}}$,
and where $C>0$ depends only on $\gamma$, $M$, $\delta_1$,
$M_0$, $M_1$ and $\delta_0$.

Since a pointwise lower bound for $|\nabla^2 w_i|^2$ cannot hold in general,
the next crucial step consists in the following Claim.

\noindent
\textbf{Claim.} If
\begin{equation}
   \label{eq:eta}
\max\left\{
\int_{D_2 \setminus \overline{D_1}}|\nabla^2 w_1|^2, \int_{D_1 \setminus \overline{D_2}}|\nabla^2 w_2|^2\right\} \leq\ \frac{\eta}{\rho_0^2},
\end{equation}
then
\begin{equation}
   \label{eq:d<eta}
d_{\cal H}(\partial D_1,\partial
D_2)\leq
C\rho_0\left[\log\left(\frac{C\rho_0^4\|\widehat{M}\|_{H^{-\frac{1}{2}}(\partial
\Omega ,\R^n)}^2} {\eta}\right)\right]^{-\frac{1}{B}},
\end{equation}
where $B>0$ and $C>0$ only depend on $\gamma$, $M$, $\delta_1$,
$M_0$, $M_1$, $\delta_0$ and $F$.
\begin{proof} [Proof of the Claim]
Denoting for simplicity $d=d_{\cal H}(\partial D_1,\partial
D_2)$,
we may assume, with no loss of generality, that there exists
$x_0\in \partial D_1$ such that $\hbox{dist}(x_0,\partial D_2)=d$.
Let us distinguish two cases:

\item{i)} $B_d(x_0) \subset D_2$;
\item{ii)} $B_d(x_0) \cap D_2 = \emptyset$.

In case i), by the regularity assumptions made on $\partial D_1$,
there exists $x_1\in D_2 \setminus D_1$ such that
$B_{td}(x_1)\subset D_2 \setminus D_1$, with
$t=\frac{1}{1+\sqrt{1+M_0^2}}$.

In \cite{M-R-V6}, by iterated application of the three sphere inequality \eqref{eq:3sph},
we have obtained the following \emph{Lipschitz propagation of smallness} estimate:
there exists $s>1$, only depending on $\gamma$, $M$, $\delta_1$, $M_0$
and $\delta_0$, such that for every $\rho>0$ and every $\bar x\in
(\Omega\setminus \overline{D})_{s\rho}$, we have
\begin{equation}
   \label{eq:LPS}
\int_{B_\rho(\bar x)}|\nabla^2 w|^2\geq
\frac{C\rho_0^2}{\exp\left[A\left(\frac{\rho_0}{\rho}\right)^B\right]}
\| \widehat{M} \|_{H^{-\frac{1}{2}}}^2,
\end{equation}
where $A>0$, $B>0$ and $C>0$ only depend on $\gamma$, $M$,
$\delta_1$, $M_0$, $M_1$, $\delta_0$ and $F$. By \eqref{eq:eta}
and by applying \eqref{eq:LPS} with $\rho=\frac{td}{s}$, we have
\begin{equation}
   \label{eq:eta>}
\eta\geq\frac{C\rho_0^4}{\exp{\left[A\left(\frac{s\rho_0}{td}\right)^B\right]}}
\|\widehat{M}\|_{H^{-\frac{1}{2}}(\partial \Omega ,\R^2)}^2,
   \end{equation}
where $A>0$, $B>0$ and $C>0$ only depend on $\gamma$, $M$, $\delta_1$,
$M_0$, $M_1$, $\delta_0$ and $F$.

By \eqref{eq:eta>} we easily find \eqref{eq:d<eta}.

Case ii) can be treated similarly by substituting $w_1$ with
$w_2$.
\end{proof}
By applying the Claim to \eqref{eq:Cauchy1}, we obtain a first stability estimates
of log-log-log type.
At this stage, a tool which turns out to be very useful is a geometrical result,
firstly stated in \cite{A-B-R-V}, which ensures that there exists $\epsilon_0>0$, only
depending on $\gamma$, $M$, $\delta_1$, $M_0$, $M_1$, $\delta_0$
and $F$, such that if $\epsilon\leq \epsilon_0$ then $\partial G$
is of Lipschitz class with constants $\widetilde{\rho_0}$, $L$,
with $L$ and $\frac{\widetilde{\rho_0}}{\rho_0}$ only depending on
$M_0$. Lipschitz regularity prevents the occurrence of uncontrollable narrowings or cuspidal points in $G$
and allows to refine the geometrical constructions of the chains of disks to which we apply the three sphere inequality, obtaining the better estimate
\begin{equation}
   \label{eq:Cauchy2}
\max\left\{
\int_{D_2\setminus \overline{D_1}}|\nabla^2 w_1|^2,
\int_{D_1\setminus \overline{D_2}}|\nabla^2 w_2|^2\right\}
\leq
\rho_0^2\|\widehat{M}\|_{H^{-\frac{1}{2}}}^2
|\log \widetilde{\epsilon}|^{-\sigma},
\end{equation}
which holds for every $\widetilde{\epsilon}<1$, where $C>0$ and
$\sigma >0$  depend only on $\gamma$, $M$, $\delta_1$, $M_0$,
$M_1$, $\delta_0$, $L$ and $\frac{\widetilde{\rho_0}}{\rho_0}$.
Again, by applying the Claim, the desired estimates follow.
\end{proof}

\subsection{Cavities and unknown boundary portions: uniqueness} \label{subsec:cavities_uniq}
In this subsection we consider the inverse problems of determining unknown boundaries of the following
two kinds: i) the boundary of a cavity, ii) an unknown boundary portion of $\partial\Omega$.
In both cases, we have homogeneous Neumann conditions on the unknown boundary. Neumann boundary conditions
lead to further complications in the arguments involving integration by parts.
To give an idea of the differences, given any connected component $F$ of $D_2\setminus \overline{D_1}$,
whose boundary is made of two arcs $\tau \subset \partial D_1$ and $\gamma\subset \partial D_2$, having common endpoints $P_1$ and $P_2$,
the analogue of \eqref{eq:3.by_parts_Diri} becomes
\begin{multline}
   \label{eq:3.by_parts_Neu}
  \int_{F}\mathbb{P}\nabla^2w_1 \cdot  \nabla^2w_1=
  \int_{\tau} (\mathbb{P}\nabla^2w_1 n^1\cdot \tau^1 w_{1})_{,s} -
  \int_{\gamma} (\mathbb{P}\nabla^2w_2 n^2\cdot \tau^2 w_{2})_{,s}=\\
  =\left[(\mathbb{P}\nabla^2w_1 n^1\cdot \tau^1)(P_1)-
  (\mathbb{P}\nabla^2w_1 n^2\cdot \tau^2)(P_1)\right]
  (w_1(P_1)-w_1(P_2)),
\end{multline}
\noindent where $n^i$, $\tau^i$ denotes the unit normal and
tangent vector to $ D_i$, $i=1,2$.

Since, in general, the boundaries of $D_1$ and $D_2$ intersect
nontangentially, the above expression does not vanish and the
contradiction arguments fails. For this reason, we need two
boundary measurements to prove uniqueness, as stated in Theorem
\ref{theo:uniq_cavities}. Instead, uniqueness with one measurement
can be restored in the problem of the determination of an unknown
boundary portion, by taking advantage of the fact that the two
plates have a common regular boundary portion, say $\Gamma$, see
Theorem \ref{theo:main-unknown-bdry}.

\medskip
\noindent
Let $D\subset\subset\Omega$ be a domain of class $C^{1,1}$ representing an unknown cavity inside the plate $\Omega$.
Under the same assumptions made in Subsection \ref{subsec:rigid_uniq}, the transversal displacement $w$ satisfies the
following Neumann problem

\begin{center}
\( {\displaystyle \left\{
\begin{array}{lr}
     {\rm div}({\rm div} (
      {\mathbb P}\nabla^2 w))=0,
      & \mathrm{in}\ \Omega \setminus \overline {D},
        \vspace{0.25em}\\
      ({\mathbb P} \nabla^2 w)n\cdot n=-\widehat M_n, & \mathrm{on}\ \partial \Omega,
          \vspace{0.25em}\\
      {\rm div}({\mathbb P} \nabla^2 w)\cdot n+(({\mathbb P} \nabla^2
      w)n\cdot \tau),_s
      =(\widehat M_\tau),_s, & \mathrm{on}\ \partial \Omega,
        \vspace{0.25em}\\
      ({\mathbb P} \nabla^2 w)n\cdot n=0, & \mathrm{on}\ \partial D,
        \vspace{0.25em}\\
    {\rm div}({\mathbb P} \nabla^2 w)\cdot n+(({\mathbb P} \nabla^2
        w)n\cdot \tau),_s=0, & \mathrm{on}\ \partial D,
          \vspace{0.25em}\\
\end{array}
\right. } \) \vskip -8.9em
\begin{eqnarray}
& & \label{eq:dir-pbm-unkn-bdry-1}\\
& & \label{eq:dir-pbm-unkn-bdry-2}\\
& & \label{eq:dir-pbm-unkn-bdry-3}\\
& & \label{eq:dir-pbm-unkn-bdry-4}\\
& & \label{eq:dir-pbm-unkn-bdry-5}
\end{eqnarray}

\end{center}
\noindent
which admits a solution $w \in H^{2}(\Omega\setminus \overline{\Omega})$, which is uniquely
determined up to addition of an affine function.

Concerning the inverse problem of the determination of the cavity
$D$ inside the plate, let us recall the following result.
\begin{theo} [Uniqueness with two boundary measurements]
\label{theo:uniq_cavities} Let $\Omega$ be a simply connected
domain in $\mathbb{R}^2$ such that $\partial\Omega$ is of class
$C^{1,1}$ and let $D_i$, $i=1,2$, be two simply connected domains
compactly contained in $\Omega$, such that $\partial D_i$ is of
class $C^{4,1}$, $i=1,2$. Moreover, let $\Gamma$ be a nonempty
open portion of $\partial\Omega$, of class $C^{3,1}$. Let the
plate tensor $\mathbb{P}$ be given by \eqref{eq:P_def}, and
satisfying \eqref{eq:3.bound} \eqref{eq:3.convex} and the
dichotomy condition \eqref{3.D(x)bound} or \eqref{3.D(x)bound 2}.
Let $\widehat M$, $\widehat M^*$ be two boundary couple fields
both satisfying \eqref{eq:3.emme_reg}--\eqref{eq:3.emme_comp} and
such that $(\widehat M_n, \widehat M_{\tau,s})$ and $(\widehat
M_n^*, \widehat M_{\tau,s}^*)$ are linearly independent in
$H^{-\frac{1}{2}}(\partial\Omega, \mathbb{R}^2)\times
H^{-\frac{3}{2}}(\partial\Omega, \mathbb{R}^2)$. Let $w_i$,
$w_i^*$, $i=1,2$, be solutions to the Neumann problem
\eqref{eq:dir-pbm-unkn-bdry-1}--\eqref{eq:dir-pbm-unkn-bdry-5},
with $D=D_i$, corresponding to boundary data $\widehat M$,
$\widehat M^*$ respectively. If
\begin{equation}
  \label{eq:3.main_hp1}
  w_1=w_2,\qquad w_{1,n}=w_{2,n},\qquad\hbox{on}\ \Gamma,
\end{equation}
\begin{equation}
  \label{eq:3.main_hp2}
  w_1^*=w_2^*,\qquad w_{1,n}^*= w_{2,n}^*,\qquad\hbox{on}\ \Gamma,
\end{equation}
then
\begin{equation}
  \label{eq:3.main_ts}
  D_1=D_2.
\end{equation}
\end{theo}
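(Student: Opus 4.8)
The plan is to follow the contradiction scheme used for Theorem~\ref{theo:main-unknown-rig-incl}, taking care of the complications introduced by the homogeneous Neumann conditions on $\partial D_i$. First I would record the regularity: since $\partial D_i$ is of class $C^{4,1}$ and $\widehat M,\widehat M^*$ are admissible, interior elliptic estimates for the fourth order operator $v\mapsto{\rm div}({\rm div}(\mathbb P\nabla^2 v))$ give $w_i,w_i^*\in H^5(\widetilde\Omega\setminus D_i)$ for every $\widetilde\Omega$ with $D_i\subset\subset\widetilde\Omega\subset\subset\Omega$, hence by Sobolev embedding $w_i,\nabla w_i,\nabla^2 w_i$ (and likewise for $w_i^*$) are continuous up to $\partial D_i$; in particular the bending and twisting moments and the Kirchhoff shear have continuous traces up to $\partial D_i$. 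Then, exactly as in the rigid case, set $w=w_1-w_2$, $w^*=w_1^*-w_2^*$: by \eqref{eq:3.main_hp1}--\eqref{eq:3.main_hp2} both have vanishing Cauchy data on $\Gamma$, so uniqueness for the Cauchy problem together with the weak unique continuation property give $w\equiv 0$ and $w^*\equiv 0$ in $G$, the connected component of $\Omega\setminus(\overline{D_1\cup D_2})$ with $\Gamma\subset\partial G$.

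Suppose, for contradiction, that $D_1\neq D_2$; then (say) $D_2\setminus\overline{D_1}\neq\emptyset$, and let $F$ be one of its connected components. In the typical configuration $\partial F=\tau\cup\gamma$ with $\tau\subset\partial D_1$, $\gamma\subset\partial D_2$ meeting nontangentially at two points $P_1,P_2$; the special configurations — tangential contact, $\overline{D_1}\cap\overline{D_2}=\emptyset$, $\overline{D_1}\subset D_2$ — are disposed of by the argument of Theorem~\ref{theo:main-unknown-rig-incl}. Both $w_1$ and $w_1^*$ solve the plate equation in $F$ and satisfy homogeneous Neumann data on $\tau$ (being restrictions of the cavity solutions for $D_1$). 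Performing the integration by parts of $({\rm div}({\rm div}(\mathbb P\nabla^2 w_1)))w_1=0$ over $F$, as in \eqref{eq:3.by_parts_Neu}, one finds that the energy $\int_F\mathbb P\nabla^2 w_1\cdot\nabla^2 w_1$ equals a boundary/corner expression concentrated near $P_1,P_2$ — essentially the jump of the twisting moment of $w_1$ times a difference of boundary values of $w_1$ — which, since $\partial D_1$ and $\partial D_2$ meet nontangentially, does not vanish in general. This is precisely the point at which the single-measurement contradiction of Theorem~\ref{theo:main-unknown-rig-incl} fails.

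Here the second measurement enters. Let $a_F(u,v)=\int_F\mathbb P\nabla^2 u\cdot\nabla^2 v$, a positive semidefinite symmetric bilinear form by \eqref{eq:3.convex}. Writing the above identity for the three pairs $(w_1,w_1)$, $(w_1,w_1^*)$, $(w_1^*,w_1^*)$, and exploiting both the symmetry $a_F(w_1,w_1^*)=a_F(w_1^*,w_1)$ and the explicit form of the boundary/corner contributions, the goal is to show that the Gram determinant $a_F(w_1,w_1)\,a_F(w_1^*,w_1^*)-a_F(w_1,w_1^*)^2$ vanishes, i.e. that equality holds in the Cauchy--Schwarz inequality for $a_F$, so that $\nabla^2 w_1$ and $\nabla^2 w_1^*$ are proportional on $F$. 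Then there is $(c,c^*)\neq(0,0)$ with $c\,\nabla^2 w_1+c^*\,\nabla^2 w_1^*\equiv 0$ in $F$; thus $c\,w_1+c^*\,w_1^*$ is affine in $F$ and, by weak unique continuation, affine in all of $\Omega\setminus\overline{D_1}$. But $c\,w_1+c^*\,w_1^*$ is the cavity solution for $D_1$ relative to the couple field $c\,\widehat M+c^*\,\widehat M^*$, and an affine transversal displacement forces $\bigl(c\,\widehat M_n+c^*\,\widehat M_n^*,\,(c\,\widehat M_\tau+c^*\,\widehat M_\tau^*)_{,s}\bigr)\equiv 0$, contradicting the assumed linear independence of $(\widehat M_n,\widehat M_{\tau,s})$ and $(\widehat M_n^*,\widehat M_{\tau,s}^*)$. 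Therefore $D_2\setminus\overline{D_1}=\emptyset$, i.e. $D_2\subseteq\overline{D_1}$, and the $C^{4,1}$ regularity of $\partial D_i$ upgrades this to $D_2\subseteq D_1$; exchanging the roles of $D_1$ and $D_2$ yields $D_1=D_2$.

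The main obstacle is the bookkeeping of the Neumann boundary terms for the fourth order equation: identifying and controlling the corner (Kirchhoff) contributions at the transversal intersection points $P_1,P_2$, and then establishing the Cauchy--Schwarz equality, which is exactly what converts "two linearly independent couple fields" into the linear-dependence contradiction above. A secondary, essentially geometric, difficulty is the complete discussion of the possible mutual positions of $D_1$ and $D_2$ — notably the cases in which $\Omega\setminus(\overline{D_1\cup D_2})$ has more than one component, so that an arc of $\partial D_2$ bounding $F$ need not be adjacent to $G$ and the Neumann data of $w_1$ there is not known to vanish — but each such case is reduced either to the argument above or to the Dirichlet-type argument already used for Theorem~\ref{theo:main-unknown-rig-incl}. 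All the remaining ingredients — elliptic regularity, uniqueness for the Cauchy problem, and weak unique continuation — are exactly as in Subsection~\ref{subsec:rigid_uniq}.
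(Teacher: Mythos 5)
First, a caveat on the comparison: the survey does not actually prove Theorem~\ref{theo:uniq_cavities}. It recalls the statement from \cite{M-R-V2} and offers only the identity \eqref{eq:3.by_parts_Neu} as a heuristic for why the one-measurement argument of Theorem~\ref{theo:main-unknown-rig-incl} breaks down, so your proposal can only be measured against that reference and against internal consistency. Your skeleton (boundary regularity up to $\partial D_i$, vanishing of $w_1-w_2$ and $w_1^*-w_2^*$ in $G$ by Cauchy uniqueness plus weak unique continuation, integration by parts over a component $F$ of $D_2\setminus\overline{D_1}$, reduction of the energy to corner contributions, and a final contradiction with the linear independence of the couple fields) is consistent with the intended strategy.

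The genuine gap is that the entire content of the theorem sits in the step you explicitly label as ``the goal'': proving that $a_F(w_1,w_1)\,a_F(w_1^*,w_1^*)-a_F(w_1,w_1^*)^2=0$. You supply no mechanism for this. What the integration by parts actually yields is $a_F(u,v)=\sum_i R_i(u)\,v(P_i)$, where $R_i(u)$ is the jump of the twisting moment of $u$ at the $i$-th corner of $\partial F$; this is a sum of point evaluations, and the rank-one factorization $a_F(u,v)=\ell(u)\,m(v)$ that (together with symmetry) would force equality in Cauchy--Schwarz is exactly what has to be established. Moreover $D_2\setminus\overline{D_1}$ may have several components, $\partial F$ may have more than two corners, and the arcs of $\partial D_2$ bounding $F$ need not lie on $\partial G$, in which case the Neumann data of $w_1$ there is not known to vanish; you acknowledge this last point but set it aside as ``secondary,'' whereas it is one of the places where the second measurement must genuinely be used. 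There is also a consistency problem with your clean two-corner picture: if $M_n(w_1)=V(w_1)=0$ on both open arcs and $w_1$ is regular up to the two corners, then testing the identity $\displaystyle\int_F\mathbb{P}\nabla^2 w_1\cdot\nabla^2\varphi=\sum_i R_i(w_1)\varphi(P_i)$ against affine $\varphi$ (for which the left-hand side vanishes) already gives $R_1(w_1)\varphi(P_1)+R_2(w_1)\varphi(P_2)=0$ for all affine $\varphi$, hence $R_1(w_1)=R_2(w_1)=0$ and $a_F(w_1,w_1)=0$ with a \emph{single} measurement. Since the theorem is known to require two measurements, this signals that the configuration you treat as ``typical'' is not where the obstruction lives, and that the proposal has not engaged with the actual difficulty. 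As written, ``establish the Cauchy--Schwarz equality'' restates the theorem rather than proving it; the complete argument is in \cite{M-R-V2}.
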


Next, let us consider the case of a plate whose boundary is composed by an accessible portion
$\Gamma$ and by an unknown inaccessible portion $I$, to be determined. More precisely, let
$\Gamma$, $I$ be two closed, nonempty
sub-arcs of the boundary $\partial \Omega$ such that
\begin{equation}
  \label{eq:3.hp-bdry1}
  \Gamma \cup I = \partial \Omega, \quad \Gamma \cap I = \{Q,R\},
\end{equation}
where $Q$, $R$ are two distinct points of $\partial \Omega$. On
the assigned couple field $\widehat M$ let us require the
following assumptions:
\begin{equation}
  \label{eq:3.emme_reg_gamma_unknown_bdry}
  \widehat M\in L^{2}(\Gamma,\mathbb{R}^2), \quad (\widehat M_n,\widehat M_{\tau,s}) \not\equiv 0,
\end{equation}
\begin{equation}
  \label{eq:3.emme_comp_gamma_unknown_bdry}
  \int_{\Gamma}\widehat M_i=0,\qquad i=1,2.
\end{equation}
Under the same assumptions made in Subsection
\ref{subsec:rigid_uniq} for the plate tensor and the domain
$\Omega$, the transversal displacement $w\in H^2(\Omega)$
satisfies the following Neumann problem
\begin{center}
\( {\displaystyle \left\{
\begin{array}{lr}
  M_{\alpha\beta,\alpha\beta}=0, &\mathrm{in}\ \Omega,
    \vspace{0.25em}\\
  M_{\alpha\beta}n_\alpha n_\beta = \widehat M_n, &\mathrm{on}\ \Gamma,
        \vspace{0.25em}\\
  M_{\alpha\beta,\beta}n_\alpha+(M_{\alpha\beta}n_\beta \tau_\alpha),_s
     =-(\widehat M_\tau),_s, &\mathrm{on}\ \Gamma,
          \vspace{0.25em}\\
  M_{\alpha\beta}n_\alpha n_\beta = 0, &\mathrm{on}\ I,
        \vspace{0.25em}\\
  M_{\alpha\beta,\beta}n_\alpha+(M_{\alpha\beta}n_\beta \tau_\alpha),_s
     =0, &\mathrm{on}\ I.
            \vspace{0.25em}\\
\end{array}
\right. } \) \vskip -9.0em
\begin{eqnarray}
& & \label{eq:3.compact_equation-bdry}\\
& & \label{eq:3.compact_bc1-Gamma}\\
& & \label{eq:3.compact_bc2-Gamma}\\
& & \label{eq:3.compact_bc1-I}\\
& & \label{eq:3.compact_bc2-I}
\end{eqnarray}

\end{center}
\noindent

Concerning the inverse problem of the determination of the unknown boundary portion $I$,
in \cite{M-R2} we have proved the following result.
\begin{theo} [Unique determination of unknown boundaries with one measurement]
\label{theo:main-unknown-bdry}
Let $\Omega_1$, $\Omega_2$ be two simply connected bounded domains
in $\mathbb{R}^2$ such that $\partial \Omega_i$, $i=1,2$, are of
class $C^{4,1}$. Let $\partial \Omega_i = I_i \cup \Gamma$,
$i=1,2$, where $I_i$ and $\Gamma$ are the inaccessible and the
accessible parts of the boundaries $\partial \Omega_i$,
respectively. Let us assume that $\Omega_1$ and $\Omega_2$ lie on
the same side of $\Gamma$ and that conditions
\eqref{eq:3.hp-bdry1} are satisfied by both pairs $\{ I_1,
\Gamma\}$ and $\{ I_2, \Gamma\}$. Let the plate tensor
$\mathbb{P}$ of class $C^{2,1}(\R^2)$ be given by
\eqref{eq:P_def}, and satisfying \eqref{eq:3.bound}
\eqref{eq:3.convex} and the dichotomy condition
\eqref{3.D(x)bound} or \eqref{3.D(x)bound 2}. Let $\widehat M \in
L^2(\Gamma, \mathbb{R}^2)$ be a boundary couple field satisfying
conditions \eqref{eq:3.emme_reg_gamma_unknown_bdry},
\eqref{eq:3.emme_comp}. Let $w_i \in H^2(\Omega_i)$ be a solution
to the Neumann problem
\eqref{eq:3.compact_equation-bdry}--\eqref{eq:3.emme_comp_gamma_unknown_bdry}
in $\Omega=\Omega_i$, $i=1,2$. If
\begin{equation}
  \label{eq:3.theorem1-hp}
    w_1=w_2, \qquad w_{1,n}=w_{2,n}, \qquad \textrm{ on } \Gamma,
\end{equation}
then
\begin{equation}
  \label{eq:3.theorem1-tesi}
    \Omega_1=\Omega_2.
\end{equation}
\end{theo}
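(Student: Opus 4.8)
\textbf{Proof plan for Theorem \ref{theo:main-unknown-bdry}.}

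The plan is to reduce the problem to a Cauchy-uniqueness / unique continuation argument on a suitable auxiliary domain, exactly parallel to the proof of Theorem \ref{theo:main-unknown-rig-incl}, but accounting for the Neumann (rather than Dirichlet) nature of the boundary conditions on the unknown portion. First I would introduce the set $G$, the connected component of $\Omega_1 \cap \Omega_2$ whose boundary contains $\Gamma$; by the hypothesis that $\Omega_1$ and $\Omega_2$ lie on the same side of $\Gamma$ together with \eqref{eq:3.hp-bdry1}, $\Gamma$ is a genuine portion of $\partial G$. On $G$ the difference $w = w_1 - w_2$ solves the plate equation $\divrg(\divrg(\mathbb P \nabla^2 w)) = 0$ and, by the assumption \eqref{eq:3.theorem1-hp}, has vanishing Cauchy data on the (regular, $C^{3,1}$ or better) portion $\Gamma$: indeed $w = 0$ and $w_{,n} = 0$ on $\Gamma$, and since the Neumann data $\widehat M$ agree on $\Gamma$ one also gets the matching of the two natural boundary operators there. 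Invoking the uniqueness for the Cauchy problem for the plate equation (Theorem 3.8 in \cite{M-R-V4}, as used in the proof of Theorem \ref{theo:main-unknown-rig-incl}) together with the weak unique continuation property that follows from the three sphere inequality \eqref{eq:3sph} under the dichotomy condition, I would conclude $w \equiv 0$ in $G$, i.e. $w_1 \equiv w_2$ in $G$.

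Next I would argue by contradiction that $\Omega_1 = \Omega_2$. Suppose not; then, say, $\Omega_2 \setminus \overline{\Omega_1}$ has a nonempty connected component $F$, whose boundary consists of an arc lying on $I_1 \subset \partial\Omega_1$ and an arc lying on $I_2 \subset \partial\Omega_2$, meeting at two points. On $F$ the function $w_1$ is defined and solves the plate equation. The key is to integrate by parts $(\divrg(\divrg(\mathbb P\nabla^2 w_1)))\,w_1 = 0$ over $F$ and to show the boundary terms vanish. On the arc of $\partial F$ lying on $\partial\Omega_1$, the homogeneous Neumann conditions \eqref{eq:3.compact_bc1-I}--\eqref{eq:3.compact_bc2-I} for $w_1$ kill the relevant terms directly. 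On the arc lying on $\partial\Omega_2$, one uses $w_1 \equiv w_2$ in $G$ (hence on the common boundary arc, by continuity of $w_i$ and $\nabla w_i$ up to the boundary, which holds by the $C^{4,1}$-regularity of the $\partial\Omega_i$ and elliptic regularity / Sobolev embedding) so that $w_1$ inherits on that arc the homogeneous Neumann conditions satisfied by $w_2$ on $I_2$. After the integration by parts this would force $\int_F \mathbb P\nabla^2 w_1 \cdot \nabla^2 w_1 = 0$, hence by strong convexity \eqref{eq:3.convex} $\nabla^2 w_1 \equiv 0$ in $F$, so $w_1$ is affine in $F$ and, by weak unique continuation, affine in all of $\Omega_1$ — contradicting the nontriviality assumption \eqref{eq:3.emme_reg_gamma_unknown_bdry} on $\widehat M$. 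The symmetric argument rules out $\Omega_1 \setminus \overline{\Omega_2} \neq \emptyset$, giving $\Omega_1 = \Omega_2$.

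I expect the main obstacle to be the integration-by-parts step on the component $F$, precisely because of the Neumann boundary conditions, as the authors themselves flag in the discussion preceding Theorem \ref{theo:uniq_cavities} and in \eqref{eq:3.by_parts_Neu}. There the issue was that, for the cavity problem, the two arcs bounding $F$ both carry homogeneous Neumann data but belong to \emph{different} solutions ($w_1$ on $\partial D_1$, $w_2$ on $\partial D_2$), so the boundary terms do \emph{not} cancel and one genuinely needs a second measurement. The reason the single-measurement argument survives here is the more favorable topology: the arc of $\partial F$ on $\partial\Omega_2$ is part of $\partial G$, where we already know $w_1 = w_2$ with full Cauchy data, so the ``$w_2$ Neumann conditions'' can legitimately be transferred to $w_1$ on that arc; this is exactly the role played by the common regular portion $\Gamma$. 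The delicate points to check carefully are: that $G$ is well-defined and $\Gamma \subset \partial G$ (using the same-side hypothesis), that the regularity hypotheses suffice to make all traces and the integration by parts on $F$ legitimate (here $C^{4,1}$ on $\partial\Omega_i$ is used, giving $w_i \in H^4_{\mathrm{loc}}$ up to the boundary), and that the endpoints $P_1, P_2$ of the arcs bounding $F$ contribute no spurious boundary terms — which is where the matching of Cauchy data on the overlapping piece, rather than mere matching of Neumann data, is essential.
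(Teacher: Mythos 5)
The paper does not actually reprove Theorem \ref{theo:main-unknown-bdry} here (it is quoted from \cite{M-R2}), so your sketch can only be measured against the strategy the authors outline in Subsection \ref{subsec:cavities_uniq}. Your first step --- zero Cauchy data for $w_1-w_2$ on $\Gamma$ (including the matching of $M_n$ and of the Kirchhoff shear, since both solutions carry the same data $\widehat M$ there), hence $w_1\equiv w_2$ on the component $G$ of $\Omega_1\cap\Omega_2$ adjacent to $\Gamma$ --- is correct and is indeed how all of these arguments begin. The contradiction step, however, has two genuine problems. First, on a component $F$ of $\Omega_2\setminus\overline{\Omega_1}$ the function $w_1$ is simply not defined: $w_1\in H^2(\Omega_1)$ and $F$ is disjoint from $\Omega_1$. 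This is not the situation of Theorem \ref{theo:main-unknown-rig-incl}, where $w_1$ lives on all of $\Omega\setminus\overline{D_1}\supset D_2\setminus\overline{D_1}$. You must integrate $w_2$ over $F$ (or, better, over $\Omega_2\setminus\overline{G}$, which also repairs an unstated topological claim: for an arbitrary component $F$ of $\Omega_2\setminus\overline{\Omega_1}$ the arc $\partial F\cap\partial\Omega_1$ need not be adjacent to $G$, and then the transfer of the homogeneous Neumann conditions via $w_1\equiv w_2$ in $G$ is unavailable). With $w_2$ the roles of the two arcs are exchanged: the $I_2$-arc carries $w_2$'s own homogeneous conditions and the $\partial\Omega_1$-arc the transferred ones.

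Second, and more seriously, your disposal of the endpoint terms does not work. You assert that the arc of $\partial F$ lying on $\partial\Omega_2$ ``is part of $\partial G$, where we already know $w_1=w_2$ with full Cauchy data''; but $\partial G\subset\overline{\Omega_1}\cap\overline{\Omega_2}$, while an $I_2$-arc bounding a component of $\Omega_2\setminus\overline{\Omega_1}$ need not meet $\overline{\Omega_1}$ except at its endpoints, so it is not in general contained in $\partial G$. After the tangential integration by parts of $\int_{\partial F}(\mathbb{P}\nabla^2 w_2\, n\cdot \tau)\,w_{2,s}$ one is left with exactly the endpoint contributions displayed in \eqref{eq:3.by_parts_Neu}: jumps of the twisting moment of a \emph{single} function across the (generically nontangential) intersection points of $\partial\Omega_1$ and $\partial\Omega_2$, multiplied by boundary values of that function. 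Matching of the full Cauchy data of $w_1$ and $w_2$ on the overlap does not make these jumps vanish --- the jump comes from the change of the normal direction at the corner, not from any mismatch between the two solutions. This is precisely the obstruction the authors identify as the reason two measurements are needed for cavities, and the ``more favorable geometric situation'' provided by the common regular arc $\Gamma$ has to enter the argument in a more substantive way than you indicate. As written, your sketch reproduces the step that fails in the cavity case and declares it harmless without an argument, so the central difficulty of this theorem is left unresolved.
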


\section{Size estimates for extreme inclusions}
\label{sec:size-estimates-extreme}

\subsection{Formulation of the problem and main results}
\label{sec:formulation-main}

Let us assume that the middle plane of the plate $\Omega$ is a
bounded domain in $\R^2$ of class $C^{1,1}$ with constants
$\rho_0$, $M_0$. In the present section we shall derive
constructive upper and lower bounds of the area of either a rigid
inclusion or a cavity in an elastic plate {}from a single boundary
measurement. These \textit{extreme} inclusions will be represented
by an open subset $D$ of $\Omega$ such that $\Omega \setminus
\overline{D}$ is connected and $D$ is compactly contained in
$\Omega$; that is, there is a number $d_0>0$ such that
\begin{equation}
  \label{eq:extreme-1.1}
    \textrm{dist} (D, \partial \Omega) \geq d_0 \rho_0.
\end{equation}
In addition, in proving the lower bound for the area of $D$, we
shall introduce the following \textit{a priori} information, which
is a way of requiring that $D$ is not ''too thin''.

\begin{definition}
  \label{def:SIFC} (Scale Invariant Fatness Condition)
Given a domain $D$ having Lipschitz boundary with constants
$r\rho_0$ and $L$, where $r>0$, we shall say that it satisfies the
Scale Invariant Fatness Condition with constant $Q>0$ if
\begin{equation}
  \label{eq:extreme-2.1}
    \textrm{diam} (D) \leq Qr \rho_0.
\end{equation}
\end{definition}

\begin{rem}
  \label{rem:extreme-pag2}
  It is evident that if $D$ satisfies Definition \ref{def:SIFC},
  then we have the trivial upper and lower estimates
\begin{equation}
  \label{eq:extreme-2.2}
    \frac{\omega_2}{ (1+  \sqrt{1+L^2})^2  }r^2\rho_0^2 \leq
    \textrm{area}(D) \leq \omega_2 Q^2 r^2 \rho_0^2,
\end{equation}
where $\omega_2$ denotes the measure of the unit disk in $\R^2$.
Since we are interested in obtaining upper and lower bounds of the
area of $D$ when $D$ is unknown, it will be necessary to consider
also the number $r$ as an unknown parameter, and all our estimates
will not depend on $r$. Conversely, the parameters $L$ and $Q$,
which are invariant under scaling, will be considered as \textit{a
priori} information on the unknown inclusion $D$.
\end{rem}

For reader's convenience and in order to introduce some useful
notation, we briefly recall the formulation of the equilibrium
problem when the plate contains either a rigid inclusion or a
cavity, and when the inclusion is absent.

Let us assume that the plate tensor $\mathbb P \in L^\infty(\R^2,
{\mathcal L} ({\mathbb{M}}^{2},
  {\mathbb{M}}^{2}))$ given by
\eqref{eq:P_def} satisfies the symmetry conditions
\eqref{eq:sym-conditions-C-components} and the strong convexity
condition \eqref{eq:3.convex}. Moreover, let $\widehat{M} \in H^{-
\frac{1}{2}}(\partial \Omega, \R^2)$ satisfy \eqref{eq:M_comp}.

If an inclusion $D$ made by rigid material is present, with
boundary $\partial D$ of class $C^{1,1}$, then the transversal
displacement $w_R$ corresponding to the assigned couple field
$\widehat{M}$ is given as the weak solution $w_R \in H^2(\Omega
\setminus \overline{D})$ of the boundary value problem
\begin{center}
\( {\displaystyle \left\{
\begin{array}{lr}
     {\rm div}({\rm div} (
      {\mathbb P}\nabla^2 w_R))=0,
      & \mathrm{in}\ \Omega \setminus \overline {D},
        \vspace{0.25em}\\
      ({\mathbb P} \nabla^2 w_R)n\cdot n=-\widehat M_n, & \mathrm{on}\ \partial \Omega,
          \vspace{0.25em}\\
      {\rm div}({\mathbb P} \nabla^2 w_R)\cdot n+(({\mathbb P} \nabla^2
      w_R)n\cdot \tau),_s
      =(\widehat M_\tau),_s, & \mathrm{on}\ \partial \Omega,
        \vspace{0.25em}\\
      w_R|_{\overline{D}} \in \mathcal{A}, &\mathrm{in}\ \overline{D},
          \vspace{0.25em}\\
        \frac{\partial w_R^e}{\partial n} = \frac{\partial w_R^i}{\partial n}, &\mathrm{on}\ \partial {D},
          \vspace{0.25em}\\
\end{array}
\right. } \) \vskip -8.9em
\begin{eqnarray}
& & \label{eq:extreme-3.1a}\\
& & \label{eq:extreme-3.1b}\\
& & \label{eq:extreme-3.1c}\\
& & \label{eq:extreme-3.1d}\\
& & \label{eq:extreme-3.1e}
\end{eqnarray}
\end{center}
coupled with the equilibrium conditions for the rigid inclusion
$D$
\begin{multline}
  \label{eq:extreme-3.2}
    \int_{\partial D} \left ( {\rm div}({\mathbb P} \nabla^2 w_R^e)\cdot n+(({\mathbb P} \nabla^2
  w_R^e)n\cdot \tau),_s \right )g - (({\mathbb P} \nabla^2 w_R^e)n\cdot n)
  g_{,n} =0, \\   \quad \hbox{for every } g\in \mathcal{A},
\end{multline}
where we recall that we have defined $w_R^e \equiv w|_{\Omega
\setminus \overline{D}}$ and $w_R^i \equiv w|_{ \overline{D}}$.

When a cavity is present, then the transversal displacement in
$\Omega \setminus \overline{D}$ is given as the weak solution $w_V
\in H^2(\Omega \setminus \overline{D})$ to the boundary value
problem
\begin{center}
\( {\displaystyle \left\{
\begin{array}{lr}
     {\rm div}({\rm div} (
      {\mathbb P}\nabla^2 w_V))=0,
      & \mathrm{in}\ \Omega \setminus \overline {D},
        \vspace{0.25em}\\
      ({\mathbb P} \nabla^2 w_V)n\cdot n=-\widehat M_n, & \mathrm{on}\ \partial \Omega,
          \vspace{0.25em}\\
      {\rm div}({\mathbb P} \nabla^2 w_V)\cdot n+(({\mathbb P} \nabla^2
      w_V)n\cdot \tau),_s
      =(\widehat M_\tau),_s, & \mathrm{on}\ \partial \Omega,
        \vspace{0.25em}\\
      ({\mathbb P} \nabla^2 w_V)n\cdot n=0, & \mathrm{on}\ \partial D,
        \vspace{0.25em}\\
    {\rm div}({\mathbb P} \nabla^2 w_V)\cdot n+(({\mathbb P} \nabla^2
        w_V)n\cdot \tau),_s=0, & \mathrm{on}\ \partial D.
          \vspace{0.25em}\\
\end{array}
\right. } \) \vskip -8.9em
\begin{eqnarray}
& & \label{eq:extreme-4.1a}\\
& & \label{eq:extreme-4.1b}\\
& & \label{eq:extreme-4.1c}\\
& & \label{eq:extreme-4.1d}\\
& & \label{eq:extreme-4.1e}
\end{eqnarray}
\end{center}
Finally, when the inclusion is absent, we shall denote by $w_0 \in
H^2(\Omega)$ the corresponding transversal displacement of the
plate which will be given as the weak solution of the Neumann
problem
\begin{center}
\( {\displaystyle \left\{
\begin{array}{lr}
     {\rm div}({\rm div} (
      {\mathbb P}\nabla^2 w_0))=0,
      & \mathrm{in}\ \Omega,
        \vspace{0.25em}\\
      ({\mathbb P} \nabla^2 w_0)n\cdot n=-\widehat M_n, & \mathrm{on}\ \partial \Omega,
          \vspace{0.25em}\\
      {\rm div}({\mathbb P} \nabla^2 w_0)\cdot n+(({\mathbb P} \nabla^2
      w_0)n\cdot \tau),_s
      =(\widehat M_\tau),_s, & \mathrm{on}\ \partial \Omega.
          \vspace{0.25em}\\
\end{array}
\right. } \) \vskip -5.9em
\begin{eqnarray}
& & \label{eq:extreme-5.2a}\\
& & \label{eq:extreme-5.2b}\\
& & \label{eq:extreme-5.2c}
\end{eqnarray}
\end{center}
We shall denote by $W_R$, $W_V$, $W_0$ the work exerted by the
couple field $\widehat{M}$ acting on $\partial \Omega$ when $D$ is
a rigid inclusion, it is a cavity, or it is absent, respectively.
By the weak formulation of the corresponding equilibrium problem
it turns out that
\begin{equation}
  \label{eq:extreme-6.1}
  W_R=-\int_{\partial\Omega}\widehat M_{\tau,s}w_R+\widehat M_n
  w_R,_n= \int_{\Omega \setminus \overline{D}} \mathbb P \nabla^2 w_R \cdot
  \nabla^2 w_R,
  \end{equation}
\begin{equation}
  \label{eq:extreme-6.2}
  W_V=-\int_{\partial\Omega}\widehat M_{\tau,s}w_V+\widehat M_n
  w_V,_n= \int_{\Omega \setminus \overline{D}} \mathbb P \nabla^2 w_V \cdot
  \nabla^2 w_V,
  \end{equation}
\begin{equation}
  \label{eq:extreme-6.3}
  W_0=-\int_{\partial\Omega}\widehat M_{\tau,s}w_0+\widehat M_n
  w_0,_n= \int_{\Omega} \mathbb P \nabla^2 w_0 \cdot
  \nabla^2 w_0.
  \end{equation}
Note that the works $W_R$, $W_V$ and $W_0$ are well defined since
they are invariant with respect to the addition of any affine
function to the displacement fields $w_R$, $w_V$ and $w_0$,
respectively. In the following, the solutions $w_V$ and $w_0$ will
be uniquely determined by imposing the normalization conditions
\begin{equation}
  \label{eq:extreme-6bis.1}
  \int_{\partial D} w_V=0, \quad \int_{\partial D} \nabla w_V=0,
  \end{equation}
\begin{equation}
  \label{eq:extreme-6bis.2}
  \int_\Omega w_0=0, \quad \int_\Omega \nabla w_0=0.
  \end{equation}
Concerning the solution $w_R$, we found convenient to normalize it
by requiring that
\begin{equation}
  \label{eq:extreme-6bis.3}
  w_R=0, \quad \mathrm{in } \ \overline{D}.
\end{equation}
For a given positive number $h_1$, we denote by $D_{h_1 \rho_0}$
the set
\begin{equation}
  \label{eq:extreme-Dh1rhozero}
    D_{h_1 \rho_0}= \{ x \in D | \ \mathrm{dist}(x, \partial D) >
    h_1 \rho_0\}.
\end{equation}
We are now in position to state our size estimates. In the case of
a rigid inclusion we have the following two theorems.

\begin{theo}
  \label{theo:extreme-rig-up}

Let $\Omega$ be a simply connected bounded domain in $\R^2$, such
that $\partial \Omega$ is of class $C^{2,1}$ with constants
$\rho_0$, $M_0$, and satisfying \eqref{eq:bound_area}. Let $D$ be
a simply connected open subset of $\Omega$ with boundary $\partial
D$ of class $C^{1,1}$, satisfying \eqref{eq:extreme-1.1}, such
that $\Omega \setminus \overline{D}$ is connected and
\begin{equation}
  \label{eq:extreme-7.1}
  \mathrm{area}(D_{h_1\rho_0}) \geq \frac{1}{2} \mathrm{area}(D),
\end{equation}
for a given positive number $h_1$. Let the plate tensor $\mathbb
P$ given by \eqref{eq:P_def} satisfy
\eqref{eq:sym-conditions-C-components}, \eqref{eq:3.convex},
\eqref{eq:3.bound_quantit} and the dichotomy condition. Let
$\widehat{M}\in L^2(\partial \Omega, \R^2)$ satisfy
\eqref{eq:reg_M}--\eqref{eq:supp_M}, with $\Gamma$ satisfying
\eqref{eq:small_enough}. The following inequality holds
\begin{equation}
  \label{eq:extreme-7.2}
  \mathrm{area}(D) \leq K\rho_0^2 \frac{W_0-W_R}{W_0},
\end{equation}
where the constant $K>0$ only depends on the quantities $M_0$,
$M_1$, $d_0$, $h_1$, $\gamma$, $\delta_1$, $M$, $\delta_0$ and
$F$.
\end{theo}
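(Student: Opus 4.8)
The strategy is the standard energy-comparison scheme for size estimates, adapted to the rigid-inclusion case. The upper bound on $\mathrm{area}(D)$ will come from bounding the work gap $W_0 - W_R$ \emph{from below} by a multiple of $\int_{D}|\nabla^2 w_0|^2$, and then bounding that quantity \emph{from below} by $\mathrm{area}(D)$ times a positive constant, the latter step using interior regularity and a quantitative unique continuation estimate for the reference solution $w_0$. Throughout, $w_0$ is normalized by \eqref{eq:extreme-6bis.2} and $w_R$ by \eqref{eq:extreme-6bis.3}.

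\medskip
\noindent\textbf{Step 1: Energy identities and the sign of $W_0-W_R$.} First I would record, via the weak formulations \eqref{eq:extreme-6.1} and \eqref{eq:extreme-6.3} together with integration by parts using the boundary conditions \eqref{eq:extreme-3.1a}--\eqref{eq:extreme-3.1e} and \eqref{eq:extreme-3.2}, the identity
\begin{equation*}
W_0 - W_R = \int_{\Omega}\mathbb P\nabla^2 w_0\cdot\nabla^2 w_0 - \int_{\Omega\setminus\overline D}\mathbb P\nabla^2 w_R\cdot\nabla^2 w_R = \int_{\Omega\setminus\overline D}\mathbb P\nabla^2(w_0-w_R)\cdot\nabla^2(w_0-w_R) + \int_D \mathbb P\nabla^2 w_0\cdot\nabla^2 w_0,
\end{equation*}
where the cross terms vanish because $w_0-w_R$ is an admissible test function for the $w_R$ problem (note $w_R\equiv 0$ on $\overline D$ so $\nabla^2 w_R$ is only integrated over $\Omega\setminus\overline D$, and one uses that $w_R|_{\overline D}$ and $w_{0}$ match up to affine functions only on $\partial D$ in the appropriate weak sense). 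By strong convexity \eqref{eq:3.convex} this already gives
\begin{equation*}
W_0 - W_R \geq \gamma\int_D |\nabla^2 w_0|^2,
\end{equation*}
which is the inequality we need; the nontrivial point is that the sign works in this direction (rigid inclusion stiffens the plate, so $W_R\le W_0$). I also need $W_0>0$, which follows from \eqref{eq:3.convex} and the nontriviality \eqref{eq:new_25}/\eqref{eq:reg_M} of $\widehat M$, together with a quantitative lower bound $W_0\ge C\rho_0^2\|\widehat M\|_{H^{-1/2}}^2$ of the type already used in the excerpt.

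\medskip
\noindent\textbf{Step 2: Lower bound $\int_D|\nabla^2 w_0|^2\ge c\,\mathrm{area}(D)$.} This is where the dichotomy condition enters. By interior regularity estimates for the fourth-order equation \eqref{eq:extreme-5.2a} (of Agmon--Douglis--Nirenberg type), $w_0\in C^{2,\alpha}$ in the interior region $\{x:\mathrm{dist}(x,\partial\Omega)\ge d_0\rho_0/2\}\supset D$, with $\|\nabla^2 w_0\|_{L^\infty}$ controlled by the a priori data times $\|\widehat M\|$. Combined with the three-sphere inequality \eqref{eq:3sph}, iterated along chains of balls from $\Gamma$ inward (the Lipschitz-propagation-of-smallness mechanism, here in the \emph{reference} plate which has no inclusion, so the geometry is trivial), one obtains a uniform pointwise lower bound: there is $c_0>0$ depending only on the a priori data such that $|\nabla^2 w_0(x)|^2\ge c_0\|\widehat M\|_{H^{-1/2}}^2/\rho_0^2$ for every $x$ in the interior region, hence in particular on the subset $D_{h_1\rho_0}$. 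Integrating over $D_{h_1\rho_0}$ and invoking the fatness hypothesis \eqref{eq:extreme-7.1}, $\int_D|\nabla^2 w_0|^2\ge \int_{D_{h_1\rho_0}}|\nabla^2 w_0|^2\ge c_0\|\widehat M\|^2\rho_0^{-2}\cdot\tfrac12\mathrm{area}(D)$.

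\medskip
\noindent\textbf{Step 3: Assembling.} Combining Steps 1 and 2,
\begin{equation*}
W_0 - W_R \geq \frac{\gamma c_0}{2\rho_0^2}\|\widehat M\|_{H^{-1/2}}^2\,\mathrm{area}(D) \geq \frac{\gamma c_0}{2\rho_0^2 C}\, W_0\,\mathrm{area}(D),
\end{equation*}
using the lower bound $W_0\ge C\rho_0^2\|\widehat M\|^2/\rho_0^2$... more carefully, using $W_0\le$ (upper energy bound) $\le C'\|\widehat M\|^2$ to replace $\|\widehat M\|^2$ by $W_0/C'$ from above; rearranging yields $\mathrm{area}(D)\le K\rho_0^2 (W_0-W_R)/W_0$ with $K$ depending only on $M_0,M_1,d_0,h_1,\gamma,\delta_1,M,\delta_0,F$. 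I would be careful here that the frequency bound \eqref{eq:M_frequency} is what makes $c_0$ (which a priori involves $\|\widehat M\|_{L^2}/\|\widehat M\|_{H^{-1/2}}$ through the three-sphere iteration) depend only on $F$ and not on $\widehat M$ itself.

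\medskip
\noindent\textbf{Main obstacle.} The delicate point is Step 2: establishing that the reference solution $w_0$ has no "dead zones" where its hessian vanishes, quantitatively and uniformly. A pointwise lower bound on $|\nabla^2 w_0|$ cannot hold at individual points in general (the hessian can vanish at isolated points), so the bound must be in an integrated/averaged form over balls, and then converted to a bound over $D_{h_1\rho_0}$ via a covering argument whose constants must stay controlled — this is exactly why the fatness condition \eqref{eq:extreme-7.1} is imposed (so that $D_{h_1\rho_0}$ carries a definite fraction of the area) and why one needs the dichotomy condition (so that the three-sphere inequality \eqref{eq:3sph}, hence the propagation of smallness, is available for the anisotropic plate operator). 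The bookkeeping ensuring all constants depend only on the listed a priori data, and in particular that the frequency ratio is absorbed into $F$, is the part requiring the most care.
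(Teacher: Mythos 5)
Your proof follows essentially the same route as the paper's: the energy comparison $W_0-W_R\ge\int_D\mathbb P\nabla^2 w_0\cdot\nabla^2 w_0$ (the paper's Lemma \ref{lem:extreme-rigid} together with \eqref{eq:3.convex}) combined with the lower bound $\int_D|\nabla^2 w_0|^2\ge C\,\mathrm{area}(D)\,W_0/\rho_0^2$, which the paper imports from \cite{M-R-V6} and which rests on the three sphere inequality, the frequency bound \eqref{eq:M_frequency} and the fatness condition, exactly as you describe (your own correction, in the last paragraph, of the pointwise hessian bound into an averaged bound over balls plus a covering argument is precisely the right fix). The only slip is in the justification of the cross term in Step 1: $w_0-w_R$ is not an admissible test function for the $w_R$ problem, since its restriction to $\overline D$ equals $w_0|_{\overline D}$, which is not affine; the identity is instead obtained by testing the $w_0$ equation with $w_R$, which belongs to $H^2(\Omega)$ after the normalization \eqref{eq:extreme-6bis.3}.
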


\begin{theo}
  \label{theo:extreme-rig-low}

Let $\Omega$ be a simply connected bounded domain in $\R^2$, such
that $\partial \Omega$ is of class $C^{2,1}$ with constants
$\rho_0$, $M_0$, and satisfying \eqref{eq:bound_area}. Let $D$ be
a simply connected domain satisfying \eqref{eq:extreme-1.1},
\eqref{eq:extreme-2.1}, such that $\Omega \setminus \overline{D}$
is connected and the boundary $\partial D$ is of class $C^{3,1}$
with constants $r\rho_0$, $L$. Let the plate tensor $\mathbb P$
given by \eqref{eq:P_def} satisfy
\eqref{eq:sym-conditions-C-components}, \eqref{eq:3.convex} and
\eqref{eq:3.bound_quantit}. Let $\widehat{M}\in L^2(\partial
\Omega, \R^2)$ satisfy \eqref{eq:M_comp}. The following inequality
holds
\begin{equation}
  \label{eq:extreme-8.1}
  C\rho_0^2 \Phi \left ( \frac{W_0-W_R}{W_0} \right ) \leq
  \mathrm{area}(D),
\end{equation}
where the function $\Phi$ is given by
\begin{equation}
  \label{eq:extreme-8.2}
  [0,1) \ni t \mapsto \Phi(t) = \frac{t^2}{1-t},
\end{equation}
and $C>0$ is a constant only depending on $M_0$, $M_1$, $d_0$,
$L$, $Q$, $\gamma$ and $M$.
\end{theo}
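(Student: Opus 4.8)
The plan is to derive a single identity that expresses the work defect $W_0-W_R$ as a boundary integral over $\partial D$ involving only the solution $w_R$ (paired against the reference $w_0$), and then to estimate that integral by combining interior regularity for $w_0$ with boundary regularity, trace and Poincar\'e inequalities for $w_R$. First I would normalize $w_R$ by \eqref{eq:extreme-6bis.3}, so that $w_R=0$ and $\partial_n w_R=0$ on $\partial D$, and record from the weak formulations the identities $W_0=\int_\Omega\mathbb P\nabla^2 w_0\cdot\nabla^2 w_0$, $W_R=\int_{\Omega\setminus\overline D}\mathbb P\nabla^2 w_R\cdot\nabla^2 w_R$, together with the fact that the boundary work functional $v\mapsto-\int_{\partial\Omega}(\widehat M_{\tau,s}v+\widehat M_n v_{,n})$ is linear and returns $W_0$, $W_R$ on $w_0$, $w_R$.

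Starting from $W_0-W_R=\int_D\mathbb P\nabla^2 w_0\cdot\nabla^2 w_0+\int_{\Omega\setminus\overline D}\mathbb P\nabla^2(w_0-w_R)\cdot\nabla^2(w_0+w_R)$ and integrating by parts the second term (using that $w_0$ and $w_R$ both solve the plate equation in $\Omega\setminus\overline D$, share the Neumann data $(\widehat M_n,(\widehat M_\tau)_{,s})$ on $\partial\Omega$, and satisfy $w_0-w_R=w_0$, $\partial_n(w_0-w_R)=\partial_n w_0$ on $\partial D$; here the interior regularity $w_0,w_R\in H^4$ near $\partial D$, valid because $\partial D$ is $C^{3,1}$, makes the boundary traces classical, à la \cite{l:agmon}), the $\partial\Omega$ contributions collapse to $W_0-W_R$ and the volume integral over $D$ is reproduced by Green's formula for $w_0$ on $D$; after cancellation one obtains
\[
  W_0-W_R=\int_{\partial D}\Big[\big((\mathbb P\nabla^2 w_R)n\cdot n\big)\,\partial_n w_0-\big(\mathrm{div}(\mathbb P\nabla^2 w_R)\cdot n+((\mathbb P\nabla^2 w_R)n\cdot \tau)_{,s}\big)\,w_0\Big].
\]
The equilibrium conditions \eqref{eq:extreme-3.2} say precisely that this integral is unchanged when $w_0$ is replaced by $w_0-g$ for any affine $g$; choosing $g$ to be the first order Taylor polynomial of $w_0$ at an interior point $x_D$ of $D$ gives
\[
  W_0-W_R=\int_{\partial D}\Big[\big((\mathbb P\nabla^2 w_R)n\cdot n\big)\,\partial_n(w_0-g)-\big(\mathrm{div}(\mathbb P\nabla^2 w_R)\cdot n+((\mathbb P\nabla^2 w_R)n\cdot \tau)_{,s}\big)\,(w_0-g)\Big].
\]

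Next I would estimate the right-hand side by Cauchy--Schwarz on $\partial D$. Since $\mathrm{diam}(D)\le Qr\rho_0$, Taylor's theorem gives $|w_0-g|\le C(r\rho_0)^2\|\nabla^2 w_0\|_{L^\infty}$ and $|\partial_n(w_0-g)|\le C(r\rho_0)\|\nabla^2 w_0\|_{L^\infty}$ on $\partial D$, with the $L^\infty$ norm over the region $\{x:\mathrm{dist}(x,\partial\Omega)\ge\frac{d_0}{2}\rho_0\}\supset D$; interior regularity for the plate equation (which needs only \eqref{eq:sym-conditions-C-components}, \eqref{eq:3.convex}, \eqref{eq:3.bound_quantit}, not the dichotomy condition) together with $\gamma\|\nabla^2 w_0\|_{L^2(\Omega)}^2\le W_0$ yields $\|\nabla^2 w_0\|_{L^\infty}\le C\rho_0^{-1}\sqrt{W_0}$. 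For the factors carrying $w_R$, boundary regularity up to the $C^{3,1}$ boundary $\partial D$ (with homogeneous Dirichlet data there) plus trace inequalities on a collar of width comparable to $r\rho_0$ give $\|(\mathbb P\nabla^2 w_R)n\cdot n\|_{L^2(\partial D)}\le C(r\rho_0)^{-1/2}\|\nabla^2 w_R\|_{L^2(\Omega\setminus\overline D)}$ and $\|\mathrm{div}(\mathbb P\nabla^2 w_R)\cdot n+((\mathbb P\nabla^2 w_R)n\cdot \tau)_{,s}\|_{L^2(\partial D)}\le C(r\rho_0)^{-3/2}\|\nabla^2 w_R\|_{L^2(\Omega\setminus\overline D)}$, while $|\partial D|\le C r\rho_0$ and $\gamma\|\nabla^2 w_R\|_{L^2(\Omega\setminus\overline D)}^2\le W_R$. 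Collecting the powers of $r\rho_0$, both boundary products scale like $(r\rho_0)\,\sqrt{W_R}\,\|\nabla^2 w_0\|_{L^\infty}$, so, using the trivial lower bound $\mathrm{area}(D)\ge c(r\rho_0)^2$ from Remark \ref{rem:extreme-pag2},
\[
  W_0-W_R\le C(r\rho_0)\sqrt{W_R}\,\|\nabla^2 w_0\|_{L^\infty}\le C\rho_0^{-1}\sqrt{\mathrm{area}(D)}\;\sqrt{W_R\,W_0}.
\]
Dividing by $W_0$ and writing $t=(W_0-W_R)/W_0$, hence $W_R/W_0=1-t$, gives $t\le C\rho_0^{-1}\sqrt{\mathrm{area}(D)}\,\sqrt{1-t}$, i.e. $\dfrac{t^2}{1-t}\le C^2\rho_0^{-2}\,\mathrm{area}(D)$, which is exactly \eqref{eq:extreme-8.1} with $\Phi(t)=t^2/(1-t)$; tracking the constants shows $C$ depends only on $M_0,M_1,d_0,L,Q,\gamma,M$.

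The step requiring the most care, and the main obstacle, is the \emph{quantitative} boundary regularity for $w_R$: one must show that the Neumann-type traces $(\mathbb P\nabla^2 w_R)n\cdot n$ and $\mathrm{div}(\mathbb P\nabla^2 w_R)\cdot n+((\mathbb P\nabla^2 w_R)n\cdot \tau)_{,s}$ on $\partial D$ are bounded, with the correct powers of the local scale $r\rho_0$ and with constants depending only on the a priori data, by $\|\nabla^2 w_R\|_{L^2(\Omega\setminus\overline D)}$ and hence by $\sqrt{W_R}$. This rests on interior and boundary Schauder/$L^2$ estimates for the fourth order equation near a $C^{3,1}$ boundary with $C^{1,1}$ coefficients, on global-to-local energy bounds, and on trace inequalities, all combined with a careful scaling analysis so that the final constant is expressed through $M_0,M_1,d_0,L,Q,\gamma,M$ only; it is precisely here that the $C^{3,1}$ hypothesis on $\partial D$ and the Scale Invariant Fatness Condition enter. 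The remaining ingredients --- the interface identity, the interior $L^\infty$ bound for $\nabla^2 w_0$, and the concluding elementary manipulation producing $\Phi$ --- are comparatively routine.
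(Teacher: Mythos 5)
Your proposal is correct and follows essentially the same route as the paper: the energy identity of Lemma \ref{lem:extreme-rigid}, the affine gauge freedom granted by the equilibrium conditions \eqref{eq:extreme-3.2}, Cauchy--Schwarz on $\partial D$, the interior $L^\infty$ bound $\|\nabla^2 w_0\|_{L^\infty(D)}\le C\rho_0^{-1}W_0^{1/2}$, boundary $H^3$/$H^4$ regularity combined with trace and Poincar\'e estimates for $w_R$ giving exactly the scalings $(r\rho_0)^{-1/2}$ and $(r\rho_0)^{-3/2}$ you state, and the concluding algebra producing $\Phi$. The only cosmetic difference is that you fix the affine function $g$ as the first-order Taylor polynomial of $w_0$ at an interior point and use pointwise bounds together with $|\partial D|\le Cr\rho_0$, whereas the paper chooses $g$ so that $w_0+g$ has zero mean and zero mean gradient on $\partial D$ and applies the constructive Poincar\'e inequalities of Proposition \ref{prop:Poincare}; both yield the same bound $W_0-W_R\le C\rho_0^{-1}(W_0W_R\,\mathrm{area}(D))^{1/2}$.
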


When $D$ is a cavity, the following bounds hold.

\begin{theo}
  \label{theo:extreme-cav-up}

Let $\Omega$ be a simply connected bounded domain in $\R^2$, such
that $\partial \Omega$ is of class $C^{2,1}$ with constants
$\rho_0$, $M_0$, and satisfying \eqref{eq:bound_area}. Let $D$ be
a simply connected open subset of $\Omega$ with boundary $\partial
D$ of class $C^{1,1}$, satisfying \eqref{eq:extreme-1.1}, such
that $\Omega \setminus \overline{D}$ is connected and
\begin{equation}
  \label{eq:extreme-9.1}
  \mathrm{area}(D_{h_1\rho_0}) \geq \frac{1}{2} \mathrm{area}(D),
\end{equation}
for a given positive number $h_1$. Let the plate tensor $\mathbb
P$ given by \eqref{eq:P_def} satisfy
\eqref{eq:sym-conditions-C-components},
 \eqref{eq:3.convex}, \eqref{eq:3.bound_quantit} and the dichotomy
condition. Let $\widehat{M}\in L^2(\partial \Omega, \R^2)$ satisfy
\eqref{eq:reg_M}--\eqref{eq:supp_M}, with $\Gamma$ satisfying
\eqref{eq:small_enough}. The following inequality holds
\begin{equation}
  \label{eq:extreme-9.2}
  \mathrm{area}(D) \leq K\rho_0^2 \frac{W_V-W_0}{W_0},
\end{equation}
where the constant $K>0$ only depends on the quantities $M_0$,
$M_1$, $d_0$, $h_1$, $\gamma$, $\delta_1$, $M$, $\delta_0$ and
$F$.
\end{theo}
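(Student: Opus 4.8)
The plan is to adapt the scheme used in \cite{M-R-V6} for intermediate inclusions; the homogeneous Neumann condition on $\partial D$ in fact makes the variational comparison underlying the upper bound slightly simpler than in the elastic-inclusion case. Set $a(\varphi,\psi)=\int_{\Omega\setminus\overline D}\mathbb P\nabla^2\varphi\cdot\nabla^2\psi$ and let $\ell(\varphi)=-\int_{\partial\Omega}(\widehat M_{\tau,s}\varphi+\widehat M_n\varphi_{,n})$ be the load functional, which depends only on the trace of $\varphi$ on $\partial\Omega$ and, by \eqref{eq:M_comp}, annihilates the affine functions. Then $w_V$ is the minimiser of the potential energy $J_V(\varphi)=a(\varphi,\varphi)-2\ell(\varphi)$ over $H^2(\Omega\setminus\overline D)$, with $J_V(w_V)=-W_V$ in view of \eqref{eq:extreme-6.2}, and similarly $J_0(\varphi)=\int_\Omega\mathbb P\nabla^2\varphi\cdot\nabla^2\varphi-2\ell(\varphi)$ is minimised by $w_0$ with minimum $-W_0$, by \eqref{eq:extreme-6.3}. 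Since the restriction $w_0|_{\Omega\setminus\overline D}$ is an admissible competitor for $J_V$ (the cavity problem carries constraints only on $\partial\Omega$), testing $J_V$ at it and using $\int_{\Omega\setminus\overline D}\mathbb P|\nabla^2 w_0|^2=W_0-\int_D\mathbb P\nabla^2 w_0\cdot\nabla^2 w_0$ gives, after rearranging,
\[
W_V-W_0\ \geq\ \int_D\mathbb P\nabla^2 w_0\cdot\nabla^2 w_0\ \geq\ \gamma\int_D|\nabla^2 w_0|^2 ,
\]
with $\gamma$ the ellipticity constant of \eqref{eq:3.convex}.

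The core step is then a lower bound for $\int_D|\nabla^2 w_0|^2$ in terms of $\mathrm{area}(D)$. Here one exploits that $w_0$ solves the plate equation \eqref{eq:4.equation_without D} in \emph{all} of $\Omega$, that $\widehat M$ is nontrivial by \eqref{eq:reg_M}, and that the dichotomy condition holds, so that the Lipschitz propagation of smallness estimate for the hessian is available (the analogue of \eqref{eq:LPS}, derived in \cite{M-R-V6} by iterating the three sphere inequality \eqref{eq:3sph}): there exist $A,B,C>0$ and $s>1$, depending only on the a priori data, with $\int_{B_\rho(\bar x)}|\nabla^2 w_0|^2\geq C\rho_0^2\exp(-A(\rho_0/\rho)^B)\|\widehat M\|_{H^{-1/2}}^2$ whenever $\mathrm{dist}(\bar x,\partial\Omega)\geq s\rho$. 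Fix $\rho=c\rho_0$ with $c=\min\{h_1,\,d_0/s\}$; by \eqref{eq:extreme-1.1} every $\bar x\in D$ then lies at distance $\geq s\rho$ from $\partial\Omega$, and if $\bar x\in D_{h_1\rho_0}$ then $B_\rho(\bar x)\subset D$, while for this fixed ratio $\rho/\rho_0$ the exponential factor is a constant depending only on the a priori data. Pick a maximal pairwise-disjoint family $B_\rho(x_1),\dots,B_\rho(x_N)\subset D$ with $x_k\in D_{h_1\rho_0}$; by maximality $D_{h_1\rho_0}\subset\bigcup_k B_{2\rho}(x_k)$, so $N\geq\mathrm{area}(D_{h_1\rho_0})/(4\pi\rho^2)$, which by the fatness hypothesis \eqref{eq:extreme-9.1} is $\geq\mathrm{area}(D)/(8\pi\rho^2)$. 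Summing the propagation estimate over these disjoint balls gives $\int_D|\nabla^2 w_0|^2\geq C'\,\mathrm{area}(D)\,\|\widehat M\|_{H^{-1/2}}^2$ for a constant $C'>0$ depending only on the a priori data.

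To conclude, one uses the routine a priori bound $W_0=\ell(w_0)\leq C\rho_0^2\|\widehat M\|_{H^{-1/2}(\partial\Omega)}^2$, coming from the trace estimate and the well-posedness of the reference Neumann problem (in the normalised norms of Remark~\ref{rem:2.1}). Chaining the three inequalities,
\[
W_V-W_0\ \geq\ \gamma\,C'\,\mathrm{area}(D)\,\|\widehat M\|_{H^{-1/2}}^2\ \geq\ \frac{\gamma C'}{C\rho_0^2}\,\mathrm{area}(D)\,W_0 ,
\]
which is \eqref{eq:extreme-9.2} with $K=C/(\gamma C')$; the division by $W_0$ is legitimate since $\widehat M$ nontrivial forces $W_0>0$. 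I expect the only genuinely substantive ingredient to be the Lipschitz propagation of smallness for $\nabla^2 w_0$ — the single place where the dichotomy condition is used — but that is imported from \cite{M-R-V6}; within the present argument the obstacle is purely the bookkeeping needed to keep every constant dependent only on $M_0,M_1,d_0,h_1,\gamma,\delta_1,M,\delta_0,F$ and to track the powers of $\rho_0$ through the normalisation convention.
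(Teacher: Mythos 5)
Your argument is correct and follows essentially the same route as the paper: the energy comparison $W_V-W_0\ \geq\ \int_D\mathbb P\nabla^2 w_0\cdot\nabla^2 w_0$ is exactly the left-hand inequality of \eqref{eq:extreme-23.1} (Lemma \ref{lem:extreme-cavity}), which you re-derive via the variational principle, and the lower bound $\int_D|\nabla^2 w_0|^2\geq C\,\mathrm{area}(D)\,\rho_0^{-2}\,W_0$ is the estimate the paper imports from \cite{M-R-V6} (Theorem 3.1), whose proof by Lipschitz propagation of smallness plus a covering of $D_{h_1\rho_0}$ you correctly sketch. No gaps.
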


\begin{theo}
  \label{theo:extreme-cav-low}

Let $\Omega$ be a simply connected bounded domain in $\R^2$, such
that $\partial \Omega$ is of class $C^{1,1}$ with constants
$\rho_0$, $M_0$, and satisfying \eqref{eq:bound_area}.  Let $D$ be
a simply connected domain satisfying \eqref{eq:extreme-1.1},
\eqref{eq:extreme-2.1}, such that $\Omega \setminus \overline{D}$
is connected and the boundary $\partial D$ is of class $C^{1,1}$
with constants $r\rho_0$, $L$. Let the plate tensor $\mathbb P$
given by \eqref{eq:P_def} satisfy
\eqref{eq:sym-conditions-C-components} and \eqref{eq:3.convex},
and such that $\|\mathbb P\|_{C^{2,1}(\R^2)} \leq M'$, where $M'$
is a positive parameter. Let $\widehat{M}\in L^2(\partial \Omega,
\R^2)$ satisfy \eqref{eq:M_comp}. The following inequality holds
\begin{equation}
  \label{eq:extreme-10.1}
  C\rho_0^2 \Psi \left ( \frac{W_V-W_0}{W_0} \right ) \leq
  \mathrm{area}(D),
\end{equation}
where the function $\Psi$ is given by
\begin{equation}
  \label{eq:extreme-10.2}
  [0,+\infty) \ni t \mapsto \Psi(t) = \frac{t^2}{1+t},
\end{equation}
and $C>0$ is a constant only depending on $M_0$, $M_1$, $d_0$,
$L$, $Q$, $\gamma$ and $M'$.
\end{theo}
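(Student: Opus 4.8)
The plan is to carry over to the plate the classical Alessandrini--Rosset scheme for the lower size estimate of insulating inclusions. Set $t=\frac{W_V-W_0}{W_0}$ (we may assume $W_0>0$, the statement being otherwise trivial), write $E_U(u,v)=\int_U\mathbb{P}\nabla^2u\cdot\nabla^2v$, and recall from the weak formulations of \eqref{eq:extreme-4.1a}--\eqref{eq:extreme-4.1e} and \eqref{eq:extreme-5.2a}--\eqref{eq:extreme-5.2c} that both $w_0$ and $w_V$ are tested against the \emph{same} boundary functional $L(\phi)=-\int_{\partial\Omega}(\widehat M_{\tau,s}\phi+\widehat M_n\phi_{,n})$, which depends only on the Cauchy data of $\phi$ on $\partial\Omega$; moreover $w_V$ carries homogeneous Neumann data on $\partial D$, and $W_0=L(w_0)=E_\Omega(w_0,w_0)$, $W_V=L(w_V)=E_{\Omega\setminus\overline D}(w_V,w_V)$ by \eqref{eq:extreme-6.2}--\eqref{eq:extreme-6.3}.

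First I would record two energy identities. Testing the weak formulation of $w_V$ with the admissible function $w_0|_{\Omega\setminus\overline D}$ gives $E_{\Omega\setminus\overline D}(w_V,w_0)=L(w_0)=W_0$, whence, expanding $E_{\Omega\setminus\overline D}(w_V-w_0,w_V-w_0)$ and using $E_{\Omega\setminus\overline D}(w_0,w_0)=W_0-E_D(w_0,w_0)$,
\begin{equation*}
W_V-W_0=E_{\Omega\setminus\overline D}(w_V-w_0,\,w_V-w_0)+E_D(w_0,w_0)\ \ge\ 0,
\end{equation*}
so $W_V\ge W_0$, $t\ge 0$, and $\Psi(t)=\frac{t^2}{1+t}$ is well defined. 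Next, let $\widetilde{w_V}\in H^2(\Omega)$ be \emph{any} extension of $w_V$. Since $L$ sees only the traces on $\partial\Omega$, where $\widetilde{w_V}=w_V$, testing the weak formulation of $w_0$ with $\widetilde{w_V}$ gives $E_\Omega(w_0,\widetilde{w_V})=L(\widetilde{w_V})=L(w_V)=W_V$; splitting the integral over $\Omega=(\Omega\setminus\overline D)\cup D$ and using $E_{\Omega\setminus\overline D}(w_0,w_V)=W_0$ yields the second identity
\begin{equation*}
W_V-W_0=E_D(w_0,\widetilde{w_V}).
\end{equation*}

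By Cauchy--Schwarz for the positive definite form $E_D$, $W_V-W_0\le E_D(w_0,w_0)^{1/2}\,E_D(\widetilde{w_V},\widetilde{w_V})^{1/2}$, and I would estimate the two factors separately. For the first, interior regularity for the fourth order equation satisfied by $w_0$ in $\Omega$ (here $\mathbb{P}\in C^{2,1}$ is used) together with $\mathrm{dist}(D,\partial\Omega)\ge d_0\rho_0$ give $\|\nabla^2w_0\|_{L^\infty(D)}\le \frac{C}{d_0\rho_0}\|\nabla^2w_0\|_{L^2(\Omega)}$, and \eqref{eq:3.convex} yields $\|\nabla^2w_0\|_{L^2(\Omega)}^2\le\gamma^{-1}W_0$, so that
\begin{equation*}
E_D(w_0,w_0)\le M'\,\|\nabla^2w_0\|_{L^\infty(D)}^2\,\mathrm{area}(D)\le \frac{C_1}{\rho_0^2}\,W_0\,\mathrm{area}(D).
\end{equation*}
For the second factor I would take $\widetilde{w_V}$ to be a Sobolev extension of $w_V$ across the $C^{1,1}$ domain $D$ with operator norm controlled by the Lipschitz character $L$ of $\partial D$; then $E_D(\widetilde{w_V},\widetilde{w_V})\le M'\|\nabla^2\widetilde{w_V}\|_{L^2(D)}^2\le C\|w_V\|_{H^2(\Omega\setminus\overline D)}^2$, and invoking the normalization $\int_{\partial D}w_V=0$, $\int_{\partial D}\nabla w_V=0$ together with Poincar\'e inequalities on $\Omega\setminus\overline D$ reduces this, via \eqref{eq:3.convex}, to $E_D(\widetilde{w_V},\widetilde{w_V})\le C_2\,W_V$. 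Combining the three displays gives $(W_V-W_0)^2\le \frac{C_1C_2}{\rho_0^2}W_0W_V\,\mathrm{area}(D)$, hence, since $W_V/W_0=1+t$,
\begin{equation*}
\mathrm{area}(D)\ \ge\ \frac{\rho_0^2}{C_1C_2}\,\frac{(W_V-W_0)^2}{W_0W_V}\ =\ \frac{\rho_0^2}{C_1C_2}\,\frac{t^2}{1+t}\ =\ \frac{\rho_0^2}{C_1C_2}\,\Psi(t),
\end{equation*}
which is \eqref{eq:extreme-10.1}--\eqref{eq:extreme-10.2}.

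I expect the main obstacle to be making the second factor quantitative: one must build a bounded extension operator $H^2(\Omega\setminus\overline D)\to H^2(\Omega)$ whose norm depends only on scale invariant data, and then convert the resulting \emph{full} $H^2$-bound into the purely energetic bound $E_D(\widetilde{w_V},\widetilde{w_V})\le C_2 W_V$ using the normalization of $w_V$. This is where the Scale Invariant Fatness Condition \eqref{eq:extreme-2.1} is essential: it keeps the shape ratio $\mathrm{diam}(D)/(r\rho_0)$ bounded by $Q$, so that the collar of $\partial D$ inside $\Omega\setminus\overline D$ cannot become so elongated that the Poincar\'e--Wirtinger constants entering the lower-order terms blow up; this is how $L$ and $Q$ (and, through the geometry of $\Omega$, $M_0,M_1$) enter the constant $C=1/(C_1C_2)$, while $M'$, $d_0$, $\gamma$ enter through interior regularity and ellipticity. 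The remaining pieces --- the two weak-formulation identities, interior regularity for the plate operator, and the final Cauchy--Schwarz --- are routine; note that, in contrast with the \emph{upper} bound of Theorem~\ref{theo:extreme-cav-up}, no three sphere inequality and hence no dichotomy condition on $\mathbb{C}$ is needed here.
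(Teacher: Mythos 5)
Your argument is correct in outline, but it follows a genuinely different route from the paper's. You extend $w_V$ into $D$ and work with the volume identity $W_V-W_0=\int_D\mathbb P\nabla^2 w_0\cdot\nabla^2\widetilde{w_V}$ followed by Cauchy--Schwarz on the form $E_D$ --- this is the classical scheme of \cite{l:ar} and \cite{M-R0}. The paper instead starts from the boundary representation $W_V-W_0=\int_{\partial D}M_n(w_0)w_{V,n}+V(w_0)w_V$ of Lemma \ref{lem:extreme-cavity} (whose proof explicitly notes that it is designed to \emph{avoid} extending $w_V$ into the interior of $D$), applies H\"older's inequality to split this into two boundary terms, and estimates them using trace inequalities on the exterior collar $D^{r\rho_0}$, the Poincar\'e inequalities of Proposition \ref{prop:Poincare}, the perimeter bound $|\partial D|\le C\,\mathrm{area}(D)/(r\rho_0)$, and interior $L^\infty$ bounds on $\nabla^2 w_0$ and $\nabla^3 w_0$; the final algebra producing $\Psi(t)=t^2/(1+t)$ from $W_V-W_0\le\frac{C}{\rho_0}W_0^{1/2}W_V^{1/2}(\mathrm{area}(D))^{1/2}$ is identical in both routes. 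The trade-off is exactly where you locate it: your approach requires constructing an $H^2$ extension operator across the $C^{1,1}$ boundary $\partial D$ whose norm is controlled only by $L$, $Q$ and the scale $r\rho_0$, and then absorbing the lower-order terms via the normalizations \eqref{eq:extreme-6bis.1} --- a nontrivial quantitative burden that is precisely what the authors chose to sidestep, and which you should not dismiss as routine (note that \eqref{eq:extreme-15.4} requires $u=0$ on $\partial D$, which $w_V$ does not satisfy, so the Poincar\'e step needs a combination of \eqref{eq:extreme-15.3} with a further inequality on the collar rather than a direct citation). In exchange, your route never touches third derivatives of $w_0$, so in principle the hypothesis $\|\mathbb P\|_{C^{2,1}}\le M'$ could be weakened to the $C^{1,1}$-type bound \eqref{eq:3.bound_quantit} used for the rigid inclusion in Theorem \ref{theo:extreme-rig-low}; the paper needs the stronger regularity only to put $V(w_0)=\mathrm{div}(\mathbb P\nabla^2w_0)\cdot n+((\mathbb P\nabla^2w_0)n\cdot\tau)_{,s}$ in $L^\infty(D)$. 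Your observation that neither route uses the three sphere inequality or the dichotomy condition is accurate.
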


\subsection{Proof of Theorems \ref{theo:extreme-rig-up} and \ref{theo:extreme-rig-low}}
\label{sec:proof-rigid}

The starting point of the upper and lower estimates of the area of
a rigid inclusion is the following energy estimate, in which the
works $W_0$ and $W_R$ are compared.

\begin{lem}
  \label{lem:extreme-rigid}

Let $\Omega$ be a simply connected bounded domain in $\R^2$ with
boundary $\partial \Omega$ of class $C^{1,1}$. Assume that $D$ is
a simply connected open set compactly contained in $\Omega$, with
boundary $\partial D$ of class $C^{1,1}$ and such that $\Omega
\setminus \overline{D}$ is connected. Let the plate tensor
$\mathbb P \in L^\infty( \Omega, \mathcal{L}(\mathbb M^2, \mathbb
M^2))$ given by \eqref{eq:P_def} satisfy
\eqref{eq:sym-conditions-C-components} and \eqref{eq:3.convex}.
Let $\widehat{M} \in H^{ - \frac{1}{2}}(\partial \Omega, \R^2)$
satisfy \eqref{eq:M_comp}. Let $w_R \in H^2(\Omega \setminus
\overline{D})$, $w_0 \in H^2(\Omega)$ be the solutions to problems
\eqref{eq:extreme-3.1a}--\eqref{eq:extreme-3.2} and
\eqref{eq:extreme-5.2a}--\eqref{eq:extreme-5.2c}, normalized as
above. We have
\begin{equation}
  \label{eq:extreme-11.1}
  \int_D \mathbb P \nabla^2 w_0 \cdot \nabla^2 w_0 \leq W_0 -W_R =
  \int_{\partial D} M_n(w_R)w_{0,n}+V(w_R)w_0,
\end{equation}
where we have denoted by
\begin{equation}
  \label{eq:extreme-11.2}
  M_n(w_R) = -(\mathbb P \nabla^2 w_R)n \cdot n,
\end{equation}
\begin{equation}
  \label{eq:extreme-11.3}
  V(w_R) = {\rm div}({\mathbb P} \nabla^2 w_R)\cdot n+(({\mathbb P} \nabla^2
      w_R)n\cdot \tau),_s
\end{equation}
the bending moment and the Kirchhoff shear on $\partial D$
associated to $w_R$, respectively. Here, $n$ denotes the exterior
unit normal to
 $\Omega \setminus \overline{D}$.
\end{lem}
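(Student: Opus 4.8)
The plan is to compare, along the lines of \cite{M-R-V6}, the elastic energies stored by $w_0$ and by $w_R$, using $w_R$ (suitably extended inside $D$) as a test function in the weak formulation of the reference problem \eqref{eq:extreme-5.2a}--\eqref{eq:extreme-5.2c}, and exploiting the major symmetry \eqref{eq:sym-conditions-C-components} together with the non-negativity of the quadratic form $A\mapsto\mathbb{P}A\cdot A$ ensured by \eqref{eq:3.convex}. The starting point is the orthogonality relation
\[
\int_{\Omega\setminus\overline{D}}\mathbb{P}\nabla^2w_0\cdot\nabla^2w_R=W_R .
\]
Indeed, by the normalization \eqref{eq:extreme-6bis.3} and the transmission condition \eqref{eq:extreme-3.1e}, the function obtained by extending $w_R$ by $0$ to all of $D$ belongs to $H^2(\Omega)$ and has Hessian vanishing identically on $D$; it is therefore admissible in the weak formulation of \eqref{eq:extreme-5.2a}--\eqref{eq:extreme-5.2c}, which, since the extension leaves the traces on $\partial\Omega$ untouched and $\nabla^2w_R\equiv 0$ on $D$, gives the displayed identity once \eqref{eq:extreme-6.1} is invoked.

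The first inequality in \eqref{eq:extreme-11.1} then follows from strong convexity. By \eqref{eq:3.convex} one has $0\le\int_{\Omega\setminus\overline{D}}\mathbb{P}\nabla^2(w_0-w_R)\cdot\nabla^2(w_0-w_R)$; expanding the square produces $\int_{\Omega\setminus\overline{D}}\mathbb{P}\nabla^2w_0\cdot\nabla^2w_0-2\int_{\Omega\setminus\overline{D}}\mathbb{P}\nabla^2w_0\cdot\nabla^2w_R+\int_{\Omega\setminus\overline{D}}\mathbb{P}\nabla^2w_R\cdot\nabla^2w_R$. By \eqref{eq:extreme-6.1} the last integral equals $W_R$, by the orthogonality relation the middle one equals $W_R$, and by \eqref{eq:extreme-6.3} the first equals $W_0-\int_D\mathbb{P}\nabla^2w_0\cdot\nabla^2w_0$. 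Collecting these terms yields $0\le W_0-W_R-\int_D\mathbb{P}\nabla^2w_0\cdot\nabla^2w_0$, which is the asserted bound.

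For the boundary representation of $W_0-W_R$ I would integrate by parts in $\int_{\Omega\setminus\overline{D}}\mathbb{P}\nabla^2w_R\cdot\nabla^2w_0$ by means of the second Green identity for the plate operator over $\Omega\setminus\overline{D}$. Since $\mathrm{div}(\mathrm{div}(\mathbb{P}\nabla^2w_R))=0$ there, only the boundary contributions on $\partial\Omega$ and on $\partial D$ survive; the Neumann conditions \eqref{eq:extreme-3.1b}--\eqref{eq:extreme-3.1c} turn the $\partial\Omega$ contribution into $W_0$ (compare \eqref{eq:extreme-6.3}), while on $\partial D$, with the sign conventions \eqref{eq:extreme-11.2}--\eqref{eq:extreme-11.3} and $n$ the exterior unit normal to $\Omega\setminus\overline{D}$, what remains is precisely $-\int_{\partial D}\bigl(M_n(w_R)w_{0,n}+V(w_R)w_0\bigr)$. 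Hence $\int_{\Omega\setminus\overline{D}}\mathbb{P}\nabla^2w_R\cdot\nabla^2w_0=W_0-\int_{\partial D}\bigl(M_n(w_R)w_{0,n}+V(w_R)w_0\bigr)$, and comparing with the orthogonality relation of the first step (using the symmetry of $\mathbb{P}$ to interchange $w_0$ and $w_R$ on the left-hand side) gives $W_0-W_R=\int_{\partial D}M_n(w_R)w_{0,n}+V(w_R)w_0$.

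The main obstacle is the rigorous justification of this last integration by parts under the sole hypothesis $\mathbb{P}\in L^\infty(\Omega,\mathcal{L}(\mathbb{M}^2,\mathbb{M}^2))$: neither the bending moment $M_n(w_R)$ nor the Kirchhoff shear $V(w_R)$ has a classical trace on $\partial D$, so both must be understood as elements of $H^{-1/2}(\partial D)$ and $H^{-3/2}(\partial D)$ respectively, defined through the variational identity itself, the symbol $\int_{\partial D}$ denoting the corresponding duality pairings against $w_{0,n}\in H^{1/2}(\partial D)$ and $w_0\in H^{3/2}(\partial D)$. Verifying that these functionals are well posed, that Green's identity persists in this generalized sense, and that the zero extension of $w_R$ genuinely lies in $H^2(\Omega)$, is the only delicate point; the remainder is elementary algebra.
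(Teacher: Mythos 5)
Your argument is correct and is exactly the route the paper intends: it only cites the weak formulations of the two problems and the adaptation of Lemma 3.1 of \cite{M-R0}, and your proof (testing the reference problem with the zero extension of $w_R$, expanding $\int_{\Omega\setminus\overline{D}}\mathbb{P}\nabla^2(w_0-w_R)\cdot\nabla^2(w_0-w_R)\geq 0$, and recovering the boundary term on $\partial D$ via the plate Green identity) supplies precisely those details with the right signs and normalizations. Your closing remark is also well taken: under the bare $L^\infty$ and $C^{1,1}$ hypotheses of the lemma, $M_n(w_R)$ and $V(w_R)$ must indeed be read as elements of $H^{-1/2}(\partial D)$ and $H^{-3/2}(\partial D)$ defined through the variational identity, which is consistent with how the lemma is subsequently used under the stronger regularity assumptions of Theorems \ref{theo:extreme-rig-up} and \ref{theo:extreme-rig-low}.
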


\begin{proof} The proof is based on the weak formulation of
problems \eqref{eq:extreme-3.1a}--\eqref{eq:extreme-3.2} and
\eqref{eq:extreme-5.2a}--\eqref{eq:extreme-5.2c}, and can be
obtained by adapting the proof of the corresponding result for a
rigid inclusion in an elastic body derived in (\cite{M-R0}, Lemma
3.1).
\end{proof}

\begin{proof} [Proof of Theorem \ref{theo:extreme-rig-up}]

By \eqref{eq:extreme-11.1} and {}from the strong convexity
condition \eqref{eq:3.convex} we have
\begin{equation}
  \label{eq:extreme-13.1}
  \int_D |\nabla^2 w_0|^2 \leq \gamma^{-1}(W_0 -W_R).
\end{equation}
Estimate \eqref{eq:extreme-7.2} can be obtained {}from the
following lower bound for the elastic energy associated to $w_0$
in $D$
\begin{equation}
  \label{eq:extreme-13.2}
  \int_D |\nabla^2 w_0|^2 \geq C \frac{\mathrm{area}(D)
  }{\rho_0^2}W_0,
\end{equation}
where the constant $C>0$ only depends on $M_0$, $M_1$, $d_0$,
$h_1$, $\gamma$, $\delta_1$, $M$, $\delta_0$ and $F$. The above
estimate was derived in (\cite{M-R-V6}, Theorem 3.1) for
inclusions $D$ satisfying the \textit{fatness condition}
\eqref{eq:extreme-7.1} and its proof is based on a three sphere
inequality for solutions to the plate equation
\eqref{eq:extreme-5.2a} with anisotropic elastic coefficients
obeying to the dichotomy condition.
\end{proof}

In order to prove Theorem \ref{theo:extreme-rig-low} we need the
following Poincar\'{e} inequalities of constructive type.

For a given positive number $r>0$, we denote by $D^{r\rho_0}$ the
following set
\begin{equation}
  \label{eq:extreme-14.1}
    D^{r\rho_0}= \{ x\in \R^2 | \ 0<\mathrm{dist}(x,D) < r\rho_0
    \}.
\end{equation}
For $D$ with Lispchitz boundary and $u\in H^1(D)$ we define
\begin{equation}
  \label{eq:extreme-14.2}
    u_D= \frac{1}{|D|}\int_D u, \quad u_{\partial D} = \frac{1}{|\partial
    D|}\int_{\partial D} u.
\end{equation}
\begin{prop}
\label{prop:Poincare}

Let $D$ be a bounded domain in $\R^n$, $n \geq 2$, of Lipschitz
class with constants $r\rho_0$, $L$, satisfying condition
\eqref{eq:extreme-2.1} with constant $Q>0$. For every $u \in
H^1(D)$ we have
\begin{equation}
  \label{eq:extreme-15.1}
    \int_D |u-u_D|^2 \leq C_1 r^2 \rho_0^2 \int_D |\nabla u|^2,
\end{equation}
\begin{equation}
  \label{eq:extreme-15.2}
    \int_{\partial D} |u-u_{\partial D}|^2 \leq C_2 r \rho_0 \int_D |\nabla u|^2,
\end{equation}
where $C_1>0$, $C_2>0$ only depend on $L$ and $Q$.

If $u \in H^1(D^{r\rho_0})$, then
\begin{equation}
  \label{eq:extreme-15.3}
    \int_{\partial D} |u-u_{\partial D}|^2 \leq C_3 r \rho_0 \int_{D^{r\rho_0}} |\nabla u|^2,
\end{equation}
where $C_3 >0$ only depends on $L$ and $Q$. Moreover, if $u \in
H^1(D^{r\rho_0})$ and $u=0$ on $\partial D$, then we have
\begin{equation}
  \label{eq:extreme-15.4}
    \int_{D^{r\rho_0}} u^2 \leq C_4 r^2 \rho_0^2 \int_{D^{r\rho_0}} |\nabla u|^2,
\end{equation}
where $C_4>0$ only depends on $L$ and $Q$.
\end{prop}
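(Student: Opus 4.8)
The plan is to prove Proposition \ref{prop:Poincare} by the same two-step scheme for all four inequalities: first a scaling reduction that normalizes $r\rho_0=1$, turning each statement into a scale-invariant inequality whose constant must then be shown to depend only on $L$ and $Q$; then a \emph{quantitative} finite covering of $D$ (and of the relevant collars) adapted to the Lipschitz character, on which the classical Poincar\'e and trace arguments are run while tracking constants. Concretely, setting $\lambda=r\rho_0$ and replacing $D$, $u$ by $\lambda^{-1}D$, $u(\lambda\,\cdot\,)$, the rescaled domain is of Lipschitz class with constants $1$, $L$ and, by \eqref{eq:extreme-2.1}, has diameter $\le Q$; so it suffices to prove \eqref{eq:extreme-15.1}--\eqref{eq:extreme-15.4} for such a normalized domain with constants depending only on $L$ and $Q$.

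The covering is the technical heart. Using Definition \ref{def:2.1} at each boundary point, together with the interior cone/thickness property forced by Lipschitz regularity (cf.\ the trivial lower bound in Remark \ref{rem:extreme-pag2}), one builds finitely many coordinate cylinders $\{C_i\}_{i=1}^{N}$ covering $\partial D$ and, together with these, finitely many balls of a fixed radius $c(L)$ covering $\overline{D}$, such that: on each $C_i$ the set $D$ is the subgraph of a Lipschitz function with constant $\le L$; each doubled piece meets $D$ in a Lipschitz (indeed star-shaped) set of controlled character; overlaps have definite measure; and the nerve of the cover is connected with combinatorial diameter $\le N$. Because $\textrm{diam}(D)\le Q$, the cardinality satisfies $N\le N(L,Q)$ --- this is exactly where the Scale Invariant Fatness Condition enters, preventing the Poincar\'e constant from degenerating on long, thin inclusions. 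Given this cover, \eqref{eq:extreme-15.1} follows from the standard chaining argument: apply the local Poincar\'e--Wirtinger inequality on each piece and telescope the averages along chains of overlapping pieces, the final constant depending only on $c(L)$ and $N(L,Q)$.

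For \eqref{eq:extreme-15.2} I would apply on each $C_i$ the trace inequality $\int_{\partial D\cap C_i}|v|^2\le c(L)\bigl(\int_{D\cap C_i}|v|^2+\int_{D\cap C_i}|\nabla v|^2\bigr)$, valid with constant controlled by the Lipschitz constant of the defining graph, take $v=u-u_D$, sum over $i$, and invoke \eqref{eq:extreme-15.1}; then replace $u_D$ by $u_{\partial D}$ using that $c\mapsto\int_{\partial D}|u-c|^2$ is minimized at $c=u_{\partial D}$. Inequality \eqref{eq:extreme-15.3} uses the same trace mechanism, but with the gradient available only on the exterior collar: shrinking each $C_i$ so that its portion lying below the graph has depth $\le 1$, that sub-slab $T_i$ lies in $D^{1}$, and a one-dimensional Friedrichs estimate in the graph-normal direction gives $\int_{\partial D\cap C_i}|v|^2\le c(L)\bigl(\int_{T_i}|v|^2+\int_{T_i}|\nabla v|^2\bigr)$; summing over $i$, taking $v=u-u_S$ with $S=\bigcup_i T_i\subset D^{1}$, and applying Poincar\'e--Wirtinger on the tube $S$ (again by chaining over the controlled cover of $S$) yields the bound. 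Finally, for \eqref{eq:extreme-15.4}, since $u=0$ on $\partial D$ I extend $u$ by zero across $\partial D$ to obtain $\tilde u\in H^1(\Omega^{*})$ with $\Omega^{*}=\{x:\textrm{dist}(x,D)<1\}$, a bounded connected set of diameter $\le Q+2$ on which $\tilde u$ vanishes on the subset $D$, whose measure is bounded below by a positive constant $\theta_0$ depending only on $L$ and $n$ (the Lipschitz property forces $D$ to contain a ball of radius $c(L)$; cf.\ Remark \ref{rem:extreme-pag2}); the Poincar\'e inequality for functions vanishing on a set of fixed positive measure fraction --- proved on $\Omega^{*}$ by the same covering/chaining argument, the relevant cover again having cardinality $\le N(L,Q)$ --- gives $\int_{D^{1}}u^2\le\int_{\Omega^{*}}\tilde u^2\le C\int_{\Omega^{*}}|\nabla\tilde u|^2=C\int_{D^{1}}|\nabla u|^2$.

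The step I expect to be the real obstacle is the quantitative covering itself --- producing a cover whose cardinality is controlled purely by $L$ and $Q$ and which simultaneously enjoys all the overlap and chaining properties needed to make the Poincar\'e and trace constants explicit --- together with the closely related task of handling the exterior collar $D^{r\rho_0}$ in \eqref{eq:extreme-15.3}, which is not among the hypothesized Lipschitz domains and must be analyzed directly through the coordinate cylinders. The remaining ingredients (local Poincar\'e--Wirtinger on star-shaped pieces, the trace inequality on a Lipschitz graph, one-dimensional Friedrichs, and the telescoping of averages along chains) are classical, and I would carry them out only to the extent needed to exhibit the dependence of $C_1,\dots,C_4$ on $L$ and $Q$.
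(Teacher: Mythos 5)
Your proposal is correct in substance, but it is considerably more self-contained than what the paper actually does, so a comparison is in order. For \eqref{eq:extreme-15.1}--\eqref{eq:extreme-15.3} the paper offers no argument at all: it simply cites \cite{l:amr02} for the proofs and for the evaluation of the constants in terms of $L$ and $Q$. Your scaling-plus-quantitative-covering scheme, with chaining of averages for the Poincar\'e--Wirtinger inequality and local trace (resp.\ one-dimensional Friedrichs) estimates on the coordinate cylinders, is exactly the kind of argument that reference runs, and your identification of the diameter bound \eqref{eq:extreme-2.1} as the ingredient that keeps the cardinality $N(L,Q)$ of the cover (hence the chaining constant) under control is on target. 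For \eqref{eq:extreme-15.4} your route and the paper's coincide in the key idea: extend $u$ by zero across $\partial D$ into $D$ and apply a Poincar\'e inequality normalized over the subset $D$, which has measure fraction bounded below in terms of $L$ and $Q$. The only difference is that the paper gets this subset-average inequality, namely $\int_\Omega|v-v_E|^2\le\bigl(1+\sqrt{|\Omega|/|E|}\bigr)^2Cr^2\rho_0^2\int_\Omega|\nabla v|^2$, as a formal consequence of Lemma 2.1 of \cite{l:amr08fm} combined with \eqref{eq:extreme-15.1}, whereas you re-prove it by the same covering and chaining machinery; both yield constants depending only on $L$ and $Q$. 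What your approach buys is independence from the two external references; what the paper's buys is brevity and an explicit constant in the measure-fraction inequality.

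One caveat worth recording: your proof of \eqref{eq:extreme-15.3} telescopes averages along the tube $S\subset D^{r\rho_0}$, which requires the exterior collar of $\partial D$ to be connected. This is not guaranteed by the hypotheses as literally stated (a bounded Lipschitz domain in $\R^n$ may have disconnected boundary, e.g.\ an annulus, in which case \eqref{eq:extreme-15.3} itself fails for a function that is locally constant with different values on the two collar components). The same implicit restriction is present in the cited source and in the paper's applications, where $D$ is simply connected in $\R^2$, so this is a limitation of the statement's generality rather than a defect of your argument; note that \eqref{eq:extreme-15.4} is immune to it, since there the chaining is performed on the connected set $D\cup D^{r\rho_0}$ rather than on the collar alone.
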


\begin{proof} We refer to \cite{l:amr02} for a proof of the
inequalities \eqref{eq:extreme-15.1}--\eqref{eq:extreme-15.3} and
for a precise evaluation of the constants $C_1$, $C_2$, $C_3$ in
terms of the scale invariant bounds $L$, $Q$ regarding the
regularity and shape of $D$.

Inequality \eqref{eq:extreme-15.4} is a consequence of the
following result. Let $\Omega$ be a bounded domain in $\R^n$ with
Lipschitz boundary with constants $r\rho_0$, $L$, and such that
$\mathrm{diam}(\Omega)\leq Qr\rho_0$, $Q>0$. Let $E$ be any
measurable subset of $\Omega$ with positive Lebesgue measure
$|E|>0$. For every $v \in H^1(\Omega)$ we have
\begin{equation}
  \label{eq:extreme-16.1}
    \int_\Omega |v-v_E|^2 \leq \left (
    1 + \sqrt{ \frac{|\Omega|}{|E|}} \right )^2 Cr^2 \rho_0^2 \int_\Omega |\nabla v|^2,
\end{equation}
where the constant $C>0$ only depends on $L$ and $Q$. The above
inequality follows {}from Lemma 2.1 of \cite{l:amr08fm} and {}from
\eqref{eq:extreme-15.1} applied to the function $v$, see also
inequality (3.8) of \cite{l:amr08fm}. Let us extend the function
$u \in H^1(D^{r\rho_0})$ into the interior of $D$ by taking $u
\equiv 0$ in $D$, and let us continue to denote by $u$ this
extended function, with $u \in H^1(D \cup D^{r\rho_0})$.
Inequality \eqref{eq:extreme-15.4} follows {}from
\eqref{eq:extreme-16.1} by taking $v=u$, $\Omega = D \cup
D^{r\rho_0}$ and $E=D$.
\end{proof}

\begin{proof} [Proof of Theorem \ref{theo:extreme-rig-low}]

Let $g=a+bx+cy$ be an affine function such that the function
$\widetilde{w}_0= w_0 +g$ satisfies
\begin{equation}
  \label{eq:extreme-18.1}
    \int_{\partial D} \widetilde{w}_0=0, \quad \int_{\partial D}
    \nabla \widetilde{w}_0=0.
\end{equation}
The function $\widetilde{w}_0 \in H^2(\Omega)$ is a solution of
\eqref{eq:extreme-5.2a}--\eqref{eq:extreme-5.2c}, and by
\eqref{eq:extreme-3.2}, {}from the right-hand side of
\eqref{eq:extreme-11.1} and by applying H\"{o}lder's inequality,
we have
\begin{multline}
  \label{eq:extreme-18.2}
    W_0 - W_R
    \leq
    \left ( \int_{\partial D} |M_n(w_R)|^2 \right ) ^{
    \frac{1}{2}} \left ( \int_{\partial D} |\widetilde{w}_{0,n}|^2 \right ) ^{
    \frac{1}{2}} + \\
    +
    \left ( \int_{\partial D} |V(w_R)|^2 \right ) ^{
    \frac{1}{2}} \left ( \int_{\partial D} |\widetilde{w}_{0}|^2 \right ) ^{
    \frac{1}{2}} = I_1 + I_2.
\end{multline}
We start by estimating $I_1$. By \eqref{eq:extreme-15.2} and by
the definition of $\widetilde{w}_0$ we have
\begin{equation}
  \label{eq:extreme-18.3}
   \int_{\partial D} |\widetilde{w}_{0,n}|^2
   \leq C r\rho_0 \int_D |\nabla^2 w_0|^2 \leq C r \rho_0
   \|\nabla^2 w_0\|_{L^\infty(D)}^2 \mathrm{area}(D),
\end{equation}
where the constant $C>0$ only depends on $L$ and $Q$. By the
Sobolev embedding theorem (see, for instance, \cite{l:ad}), by
standard interior regularity estimates (see, for example, Theorem
8.3 in \cite{M-R-V1}), by Proposition \ref{prop:Poincare}, by
\eqref{eq:3.convex} and by \eqref{eq:extreme-6.3} we have
\begin{equation}
  \label{eq:extreme-19.1}
   \|\nabla^2 w_0\|_{L^\infty(D)} \leq
   \frac{C}{\rho_0^2}\|w_0\|_{H^2(\Omega)}
   \leq
   \frac{C}{\rho_0} \left ( \int_\Omega |\nabla^2 w_0|^2 \right
   )^{ \frac{1}{2}} \leq \frac{C}{\rho_0} W_0^{ \frac{1}{2}},
\end{equation}
where the constant $C>0$ only depends on $M_0$, $M_1$, $d_0$,
$\gamma$ and $M$.

The integral $\int_{\partial D} |M_n(w_R)|^2$ can be estimated by
using a trace inequality (see, for instance, \cite{Lions-Magenes})
and a $H^3$-regularity estimate up to the boundary $\partial D$
for $w_R$ (see, for example, Lemma 4.2 and Theorem 5.2 of
\cite{M-R-V5}):
\begin{equation}
  \label{eq:extreme-20.1}
   \int_{\partial D} |M_n(w_R)|^2 \leq
   \frac{C}{r^5\rho_0^5} \sum_{i=0}^3 (r\rho_0)^{2i}
   \int_{D^{ \frac{r\rho_0}{2} }} |\nabla^i w_R|^2
   \leq \frac{C}{r^5\rho_0^5} \sum_{i=0}^2 (r\rho_0)^{2i}
   \int_{D^{r\rho_0}} |\nabla^i w_R|^2,
\end{equation}
where the constant $C>0$ only depends on $M_0$, $M_1$, $L$, $Q$,
$\gamma$ and $M$. The function $w_R$ belongs to $H^2(D^{r\rho_0})$
and, by \eqref{eq:extreme-6bis.3}, $w_R=0$ and $\nabla w_R=0$ on
$\partial D$. Therefore, by applying twice
\eqref{eq:extreme-15.4}, by \eqref{eq:3.convex} and by
\eqref{eq:extreme-6.1} we have
\begin{equation}
  \label{eq:extreme-20.2}
   \sum_{i=0}^2 (r\rho_0)^{2i}
   \int_{D^{r\rho_0}} |\nabla^i w_R|^2 \leq Cr^4 \rho_0^4
   \int_{D^{r\rho_0}}|\nabla^2 w_R|^2 \leq Cr^4 \rho_0^4 W_R,
\end{equation}
where the constant $C>0$ only depends on $L$, $Q$ and $\gamma$. By
\eqref{eq:extreme-18.3}, \eqref{eq:extreme-19.1},
\eqref{eq:extreme-20.1}, \eqref{eq:extreme-20.2} we have
\begin{equation}
  \label{eq:extreme-21.1}
    I_1 \leq \frac{C}{\rho_0}W_0^{ \frac{1}{2}} W_R^{ \frac{1}{2}}
    (\mathrm{area}(D))^{ \frac{1}{2}},
\end{equation}
where the constant $C>0$ only depends on $M_0$, $M_1$, $d_0$, $L$,
$Q$, $\gamma$ and $M$.

The control of the term $I_2$ can be obtained similarly. By a
standard Poincar\'{e} inequality, by \eqref{eq:extreme-15.2} and
by \eqref{eq:extreme-19.1} we have
\begin{multline}
  \label{eq:extreme-21.2}
  \int_{\partial D} | \widetilde{w}_0|^2 \leq Cr^2\rho_0^2
  \int_{\partial D} | \widetilde{w}_{0,s}|^2 \leq Cr^3\rho_0^3
  \int_D |\nabla^2 w_0|^2 \leq \\
  \leq Cr^3\rho_0^3 \|\nabla^2
  w_0\|_{L^\infty(D)}^2 \mathrm{area}(D) \leq Cr^3 \rho_0 W_0
  \mathrm{area}(D),
\end{multline}
where the constant $C>0$ only depends on $M_0$, $M_1$, $d_0$, $L$,
$Q$, $\gamma$ and $M$. Concerning the integral $\int_{\partial D}
|V(w_R)|^2$, by using a trace inequality, a $H^4$-regularity
estimate up to the boundary $\partial D$ for $w_R$ (see, for
example, Lemma 4.3 and Theorem 5.3 in \cite{M-R-V5}) and by
\eqref{eq:extreme-20.2}, we have
\begin{multline}
  \label{eq:extreme-22.1}
   \int_{\partial D} |V(w_R)|^2 \leq
   \frac{C}{r^7\rho_0^7} \sum_{i=0}^4 (r\rho_0)^{2i}
   \int_{D^{ \frac{r\rho_0}{2} }} |\nabla^i w_R|^2
   \leq \\
   \leq \frac{C}{r^3\rho_0^3} \int_{D^{r\rho_0}} |\nabla^2 w_R|^2 \leq
   \frac{C}{r^3\rho_0^3}W_R,
\end{multline}
where the constant $C>0$ only depends on $M_0$, $M_1$, $L$, $Q$,
$\gamma$ and $M$. Then, by \eqref{eq:extreme-21.2} and
\eqref{eq:extreme-22.1} we have
\begin{equation}
  \label{eq:extreme-22.2}
    I_2 \leq \frac{C}{\rho_0}W_0^{ \frac{1}{2}} W_R^{ \frac{1}{2}}
    (\mathrm{area}(D))^{ \frac{1}{2}},
\end{equation}
where the constant $C>0$ only depends on $M_0$, $M_1$, $d_0$, $L$,
$Q$, $\gamma$ and $M$. By \eqref{eq:extreme-21.1} and
\eqref{eq:extreme-22.2}, the inequality \eqref{eq:extreme-8.1}
follows.
\end{proof}

\subsection{Proof of Theorems \ref{theo:extreme-cav-up} and \ref{theo:extreme-cav-low}}
\label{sec:proof-cavity}

As in the proof of the upper and lower estimates of the area of a
rigid inclusion, we need to compare the works $W_0$ and $W_V$. The
analogue of Lemma \ref{lem:extreme-rigid} is the following result.

\begin{lem}
  \label{lem:extreme-cavity}

Let $\Omega$ be a simply connected bounded domain in $\R^2$ with
boundary $\partial \Omega$ of class $C^{1,1}$. Assume that $D$ is
a simply connected open set compactly contained in $\Omega$, with
boundary $\partial D$ of class $C^{1,1}$ and such that $\Omega
\setminus \overline{D}$ is connected. Let the plate tensor
$\mathbb P \in L^\infty( \Omega, \mathcal{L}(\mathbb M^2, \mathbb
M^2))$ given by \eqref{eq:P_def} satisfy
\eqref{eq:sym-conditions-C-components} and \eqref{eq:3.convex}.
Let $\widehat{M} \in H^{ - \frac{1}{2}}(\partial \Omega, \R^2)$
satisfy \eqref{eq:M_comp}. Let $w_V \in H^2(\Omega \setminus
\overline{D})$, $w_0 \in H^2(\Omega)$ be the solutions to problems
\eqref{eq:extreme-4.1a}--\eqref{eq:extreme-4.1e} and
\eqref{eq:extreme-5.2a}--\eqref{eq:extreme-5.2c}, normalized as
above. We have
\begin{equation}
  \label{eq:extreme-23.1}
  \int_D \mathbb P \nabla^2 w_0 \cdot \nabla^2 w_0 \leq W_V -W_0 =
  \int_{\partial D} M_n(w_0)w_{V,n}+V(w_0)w_V \ ,
\end{equation}
where the functions $M_n(w_0)$, $V(w_0)$ are defined as in
\eqref{eq:extreme-11.2} and \eqref{eq:extreme-11.3}, and $n$
denotes the exterior unit normal to $\Omega \setminus
\overline{D}$.
\end{lem}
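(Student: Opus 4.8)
The plan is to establish \eqref{eq:extreme-23.1} by mirroring the structure of Lemma \ref{lem:extreme-rigid}, i.e.\ by adapting to the cavity case the energy argument of \cite{M-R0}. It splits into two independent parts: a variational comparison producing the inequality $\int_D\mathbb P\nabla^2 w_0\cdot\nabla^2 w_0\le W_V-W_0$, and a Kirchhoff--Love Green's identity on $\Omega\setminus\overline D$ producing the boundary representation of $W_V-W_0$. Throughout I would only use the weak formulations of \eqref{eq:extreme-4.1a}--\eqref{eq:extreme-4.1e} and of \eqref{eq:extreme-5.2a}--\eqref{eq:extreme-5.2c}, together with the fact that the works are invariant under addition of affine functions, so the normalizations of $w_V$ and $w_0$ are irrelevant.

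\emph{The inequality.} I would start from the nonnegativity of the elastic energy of $w_V-w_0$ in the defective plate, which by the strong convexity \eqref{eq:3.convex} reads
\[
0\le\int_{\Omega\setminus\overline D}\mathbb P\nabla^2(w_V-w_0)\cdot\nabla^2(w_V-w_0).
\]
Expanding the right-hand side and using major symmetry of $\mathbb P$, the two cross terms coincide; since $w_0|_{\Omega\setminus\overline D}\in H^2(\Omega\setminus\overline D)$ is an admissible test function in the weak formulation of problem \eqref{eq:extreme-4.1a}--\eqref{eq:extreme-4.1e}, that cross term equals $-\int_{\partial\Omega}\widehat M_{\tau,s}w_0+\widehat M_n w_{0,n}=W_0$ by \eqref{eq:extreme-6.3}; moreover $\int_{\Omega\setminus\overline D}\mathbb P\nabla^2 w_0\cdot\nabla^2 w_0=W_0-\int_D\mathbb P\nabla^2 w_0\cdot\nabla^2 w_0$, again by \eqref{eq:extreme-6.3}. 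Hence the inequality collapses to
\[
0\le W_V-2W_0+W_0-\int_D\mathbb P\nabla^2 w_0\cdot\nabla^2 w_0,
\]
which is the left-hand estimate in \eqref{eq:extreme-23.1}.

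\emph{The boundary identity.} I would evaluate $\int_{\Omega\setminus\overline D}\mathbb P\nabla^2 w_0\cdot\nabla^2 w_V$ in two ways. By the computation above it equals $W_0$. On the other hand, integrating by parts twice in $\Omega\setminus\overline D$ and using $\divrg(\divrg(\mathbb P\nabla^2 w_0))=0$ (which holds in all of $\Omega$, hence a fortiori in $\Omega\setminus\overline D$) leaves only the boundary term $-\int_{\partial(\Omega\setminus\overline D)}\bigl(M_n(w_0)w_{V,n}+V(w_0)w_V\bigr)$ over $\partial(\Omega\setminus\overline D)=\partial\Omega\cup\partial D$, with $n$ the outer normal to $\Omega\setminus\overline D$ and $M_n(\cdot)$, $V(\cdot)$ as in \eqref{eq:extreme-11.2}--\eqref{eq:extreme-11.3}; this is precisely the Green's identity underlying the variational derivation of the Kirchhoff--Love boundary conditions in Section \ref{sec:model}. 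On $\partial\Omega$ the Neumann conditions \eqref{eq:extreme-5.2b}--\eqref{eq:extreme-5.2c} for $w_0$ give $M_n(w_0)=\widehat M_n$ and $V(w_0)=\widehat M_{\tau,s}$, so the $\partial\Omega$-contribution is $-\int_{\partial\Omega}\widehat M_{\tau,s}w_V+\widehat M_n w_{V,n}=W_V$ by \eqref{eq:extreme-6.2}. Equating the two evaluations yields $W_0=W_V-\int_{\partial D}M_n(w_0)w_{V,n}+V(w_0)w_V$, i.e.\ the equality in \eqref{eq:extreme-23.1}.

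The main obstacle is making the Green's identity and the boundary pairings rigorous at the $H^2$ level with only bounded measurable coefficients: the twice-integration-by-parts should be justified by approximating with smooth data (for which all quantities are classical) and passing to the limit through the weak formulations, the $\partial D$-term being read in the limit as the duality pairing $\langle M_n(w_0),w_{V,n}\rangle_{\partial D}+\langle V(w_0),w_V\rangle_{\partial D}$, where $M_n(w_0)$, $V(w_0)$ are the conormal data of $w_0$ on $\partial D$ defined via the weak formulation and paired against the traces of $(w_{V,n},w_V)$. Beyond this bookkeeping, the only care needed is the orientation of $n$ on $\partial D$ (pointing into $D$, as in the statement) and keeping the sign conventions for $M_n$ and $V$ consistent with \eqref{eq:extreme-11.2}--\eqref{eq:extreme-11.3}; this is exactly where the argument parallels, mutatis mutandis, the cavity computation in \cite{M-R0}.
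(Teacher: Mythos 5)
Your argument is correct and is essentially the proof the paper intends: the paper's own ``proof'' merely invokes the weak formulations and Lemma 3.5 of \cite{M-R0}, remarking that one can avoid extending $w_V$ into $D$, and your expansion of $0\le\int_{\Omega\setminus\overline D}\mathbb P\nabla^2(w_V-w_0)\cdot\nabla^2(w_V-w_0)$ (using $w_0$ as test function in the cavity problem) together with the Green's identity on $\Omega\setminus\overline D$ is exactly that simplified route, with all signs and the orientation of $n$ on $\partial D$ checking out. Your closing remark that the $\partial D$ terms should be read as duality pairings defined through the weak formulation is the appropriate way to handle the merely $L^\infty$ coefficients, and matches the level of rigor of the cited source.
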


\begin{proof} As for Lemma \ref{lem:extreme-rigid}, the
proof is based on the weak formulation of problems
\eqref{eq:extreme-4.1a}--\eqref{eq:extreme-4.1e} and
\eqref{eq:extreme-5.2a}--\eqref{eq:extreme-5.2c}, and can be
obtained following the same guidelines of the corresponding result
for a cavity in an elastic body derived in (\cite{M-R0}, Lemma
3.5). However, here we simplify the approach presented in
\cite{M-R0} without extending the function $w_V$ in the interior
of $D$.
\end{proof}

\begin{proof} [Proof of Theorem \ref{theo:extreme-cav-up}]

The proof follows {}from the left-hand side of
\eqref{eq:extreme-23.1} by using the same arguments as in the
proof of Theorem \ref{theo:extreme-rig-up}.
\end{proof}

\begin{proof} [Proof of Theorem \ref{theo:extreme-cav-low}]

{}From the right-hand side of \eqref{eq:extreme-23.1} and by
applying H\"{o}lder's inequality, we have
\begin{multline}
  \label{eq:extreme-25.3}
    W_V - W_0
    \leq
    \left ( \int_{\partial D} |M_n(w_0)|^2 \right ) ^{
    \frac{1}{2}} \left ( \int_{\partial D} |{w}_{V,n}|^2 \right ) ^{
    \frac{1}{2}} + \\
    +
    \left ( \int_{\partial D} |V(w_0)|^2 \right ) ^{
    \frac{1}{2}} \left ( \int_{\partial D} |{w}_{V}|^2 \right ) ^{
    \frac{1}{2}} = J_1 + J_2.
\end{multline}
Let us estimate the integral $J_2$. By the Sobolev embedding
theorem (see, for instance, \cite{l:ad}), by standard interior
regularity estimates for $w_0$ (see, for example, Lemma 1 in
\cite{M-R-V2}), by \eqref{eq:extreme-15.1}, by \eqref{eq:3.convex}
and by \eqref{eq:extreme-6.3} we have
\begin{multline}
  \label{eq:extreme-26.1}
   \int_{\partial D} |V(w_0)|^2 \leq C \|\nabla^3
   w_0\|_{L^\infty(D)}^2|\partial D| \leq
   \frac{C}{\rho_0^6}\|w_0\|_{H^2(\Omega)}^2 |\partial D| \leq \\
   \leq \frac{C}{\rho_0^4} \int_\Omega |\nabla^2 w_0|^2 |\partial
   D| \leq \frac{C}{\rho_0^4}W_0|\partial D|,
\end{multline}
where the constant $C>0$ only depends on $M_0$, $M_1$, $d_0$,
$\gamma$ and $M'$. By (\cite{l:ar}, Lemma 2.8) we have
\begin{equation}
  \label{eq:extreme-26.2}
    |\partial D| \leq C \frac{\mathrm{area}(D)}{r\rho_0},
\end{equation}
where the constant $C>0$ only depends on $L$.

To control the integral $\int_{\partial D} | {w}_V|^2$ we use a
standard Poincar\'{e} inequality on $\partial D$, Proposition
\ref{prop:Poincare}, inequality \eqref{eq:extreme-15.3}, the
strong convexity condition \eqref{eq:3.convex} for $\mathbb P$ and
the definition of $W_V$, that is
\begin{multline}
  \label{eq:extreme-27.1}
   \int_{\partial D} |{w}_V|^2 \leq Cr^2\rho_0^2
   \int_{\partial D} |{w}_{V,s}|^2
   \leq Cr^3\rho_0^3 \int_{D^{r\rho_0} }|\nabla^2 w_V|^2 \leq \\
   \leq Cr^3\rho_0^3 \int_{\Omega \setminus \overline{D}} \mathbb{ P} \nabla^2 w_V \cdot
   \nabla^2 w_V = Cr^3\rho_0^3 W_V,
\end{multline}
where the constant $C>0$ only depends on $L$, $Q$ and $\gamma$.
Then, by \eqref{eq:extreme-26.1}, \eqref{eq:extreme-26.2} and
\eqref{eq:extreme-27.1}, and since, trivially, $r \leq K$, where
$K>0$ is a constant only depending on $L$ and $M_1$, we have
\begin{equation}
  \label{eq:extreme-27.2}
    J_2 \leq \frac{C}{\rho_0}W_0^{ \frac{1}{2}} W_V^{ \frac{1}{2}}
    (\mathrm{area}(D))^{ \frac{1}{2}},
\end{equation}
where the constant $C>0$ only depends on $M_0$, $M_1$, $d_0$, $L$,
$Q$, $\gamma$ and $M'$. By using similar arguments we can also
find the analogous bound for $J_1$, and the thesis follows.
\end{proof}

\end{document}